\newif\ifPDF
\newtheorem{theorem}{Theorem}[section]
\newtheorem{proposition}[theorem]{Proposition} 
\newtheorem{corollary}[theorem]{Corollary}
\newcommand{\dsum}{\displaystyle\sum}
\newcommand{\eps}{\varepsilon}
\newcommand{\fu}{\mathfrak{u}}
\newcommand{\fo}{\mathfrak{o}}
\newcommand{\fm}{\mathfrak{m}}
\newcommand{\fc}{\mathfrak{c}}
 \newcommand{\bbE}{\mathbb E}
 \newcommand{\bbN}{\mathbb N}
\newcommand{\bbP}{\mathbb P} 
\newcommand{\bbR}{\mathbb R}
\newcommand{\bnu}{{\boldsymbol \nu}}
 \newcommand{\bn}{\mathbf n}
 \newcommand{\bx}{\mathbf x}
 \newcommand{\bH}{\mathbf H}
\newcommand{\cC}{\mathcal C}  
\newcommand{\cE}{\mathcal E} \newcommand{\cF}{\mathcal F}
 \newcommand{\cH}{\mathcal H}
\newcommand{\wt}{\widetilde}
\newcommand{\wh}{\widehat}
\newenvironment{keywords}
{\noindent{\bf Key words.}\small}{\par\vspace{1ex}}
\newenvironment{AMS}
{\noindent{\bf AMS subject classifications 2010.}\small}{\par}
\newcommand{\bbeta}{\boldsymbol \beta}
\title{Characterizing impacts of model uncertainties in quantitative photoacoustics}
\author{
	Kui Ren\thanks{
		Department of Mathematics and the Institute for Computational Engineering and Sciences (ICES), The University of Texas, Austin, TX 78712;
		\href{mailto:ren@math.utexas.edu}{ren@math.utexas.edu}
		}
	\and
	Sarah Vall\'elian\thanks{
		Statistical and Applied Mathematical Sciences Institute,
		Research Triangle Park, NC 27709; \href{mailto:svallelian@samsi.info}{svallelian@samsi.info}
		}
}
\date{}
\begin{document}

\maketitle



\begin{abstract}
This work is concerned with uncertainty quantification problems for image reconstructions in quantitative photoacoustic imaging (PAT), a recent hybrid imaging modality that utilizes the photoacoustic effect to achieve high-resolution imaging of optical properties of tissue-like heterogeneous media. We quantify mathematically and computationally the impact of uncertainties in various model parameters of PAT on the accuracy of reconstructed optical properties. We derive, via sensitivity analysis, analytical bounds on error in image reconstructions in some simplified settings, and develop a computational procedure, based on the method of polynomial chaos expansion, for such error characterization in more general settings. Numerical simulations based on synthetic data are presented to illustrate the main ideas.
\end{abstract}


\begin{keywords}
	Uncertainty quantification, sensitivity analysis, inverse problems, image reconstruction, quantitative photoacoustics, photoacoustic tomography, acoustic wave equation, diffusion equation, modeling error
\end{keywords}


\begin{AMS}
	 35R30, 49N45, 60H99, 65C99, 65M32, 65N21, 74J25
\end{AMS}


\section{Introduction}
\label{SEC:intro}

The field of uncertainty quantification has experienced tremendous growth in the past decade, with many efficient general-purpose computational algorithms developed and some specific theoretical issues mathematically understood; see, for instance, ~\cite{ArGhSo-JCP10,BiHo-IP08,BuGhMaSt-SIAM13,ChHeLu-SIAM06,DoEfHoLu-JCP06,EfHoLu-SIAM06,FlWiAkHiBlGh-SIAM11,GaFiWiGh-IJNME10,GaZa-JCP07a,HiKeCaCaRy-SIAM05,LeKn-Book10,LeKnNaGh-JCP04,LeNaPeGhKn-SIAM07,LiWiGh-SIAM10,NoPa-IP09,NoPaSt-LNCSE10,PeIaNo-Book15,RuCh-SAM05,Stuart-AN10,WiBi-SIAM08,ZaGa-JCP08} and references therein for some recent developments in the field. In this work, we investigate uncertainty quantification issues in image reconstruction problems in quantitative photoacoustic tomography (PAT), one of the recent hybrid imaging modality that combines the advantages of the classical ultrasound imaging and optical tomography~\cite{Beard-IF11,Wang-Book09,Wang-PIER14}. Our main focus is to characterize the impact of model uncertainties on the quality of the images reconstructed.

PAT is a coupled-physics imaging method that utilizes the photoacoustic effect to construct high-resolution images of optical properties of tissue-like heterogeneous media. In a typical experiment of PAT, we send a short pulse of near-infra-red (NIR) light into an optically heterogeneous medium, such as a piece of biological tissue. The photons travel inside the medium following a diffusion-type process. The medium absorbs a portion of the photons during the propagation process. The energy of the absorbed photons leads to temperature rise inside the medium which then results in thermal expansion of the medium. When the remaining photons exit, the medium cools down and contracts due to this temperature drop. The thermal expansion and contraction within the medium induces a pressure change which then propagates through the medium in the form of ultrasound waves. 

Let us denote by $X\subseteq\bbR^d$ ($d\ge 2$) the medium of interest and $\partial X$ its boundary, and denote by $u(\bx)$ the density of photons at position $\bx\in X$, integrated over the lifetime of the short light pulse sent into the medium. It is then well-known that $u(\bx)$ solves the following elliptic boundary value problem~\cite{Arridge-IP99,ArSc-IP09,BaRe-IP11,BaUh-IP10}:
\begin{equation}\label{EQ:Diff PAT}
	\begin{array}{rcll}
		-\nabla\cdot \gamma(\bx)\nabla u(\bx) + \sigma_a(\bx) u(\bx) &=& 0,& \mbox{in}\ \  X\\
		u(\bx) &=& g(\bx),& \mbox{on}\ \ \partial X
	\end{array}
\end{equation}
where $\gamma(\bx)>0$ and $\sigma_a(\bx)>0$ are the diffusion and absorption coefficients of the medium respectively, and $g$ is the model for the (time-integrated) illumination source. The initial pressure field generated by the photoacoustic effect is given as~\cite{BaUh-IP10}:
\begin{equation}\label{EQ:Data QPAT}
	H(\bx) = \Gamma(\bx) \sigma_a(\bx) u(\bx), \qquad \bx\in\bar X
\end{equation}
where $\Gamma$, usually called the Gr\"uneisen coefficient, is a function that describes the photoacoustic efficiency of the medium. The pressure field evolves, in the form of ultrasound, following the acoustic wave equation~\cite{BaUh-IP10,FiScSc-PRE07}:
\begin{equation}\label{EQ:Acous}
	\begin{array}{rcll}
		\dfrac{1}{c^2(\bx)}\dfrac{\partial^2 p}{\partial t^2} -\Delta p &=&0, 
			&\text{in}\ \bbR_+\times\bbR^d\\
		p(0,\bx)&=& H\chi_X,
			&\text{in}\ \bbR^d\\ 
		\dfrac{\partial p}{\partial t}(0,\bx)&=&0,&\text{in}\ \bbR^d
	\end{array}
\end{equation}
where $c$ is the speed of the ultrasound and $\chi_X$ is the characteristic function of the domain $X$. It turns out that change of optical properties has very small impact on the ultrasound speed $c(\bx)$. Therefore, $c(\bx)$ and the optical coefficients $\gamma(\bx)$ and $\sigma_a(\bx)$ are treated as independent functions.

In a PAT experiment, we measure the time-dependent ultrasound signal on the surface of the medium, 
\begin{equation}\label{EQ:PAT Data}
	y(t, \bx)= p_{|(0,T]\times\partial X}, 
\end{equation}
for a long enough time $T$. The objective is then to reconstruct one or more coefficients in the set $\{\Gamma(\bx), \sigma_{a}(\bx), \gamma(\bx)\}$ from these measurements. In general, data collected from multiple illumination sources are necessary when more than one coefficients are to be reconstructed.

Image reconstructions in PAT are often performed in two steps. In the first step, one reconstructs $H$ in the acoustic wave equation from measured ultrasound data~\cite{AgKuKu-PIS09,AgQu-JFA96,AmBrGaJu-SIAM12,AmBrJuWa-LNM12,BuMaHaPa-PRE07,CoArBe-IP07,FiHaRa-SIAM07,Haltmeier-SIAM11B,HaScSc-M2AS05,Hristova-IP09,KiSc-SIAM13,KuKu-EJAM08,Kunyansky-IP08,Nguyen-IPI09,PaSc-IP07,QiStUhZh-SIAM11,StUh-IP09,Tittelfitz-IP12}. Theory on uniqueness and stability of the inverse solutions, as well as analytical reconstruction strategies, have been developed in both the case of constant ultrasound speed and the case of variable ultrasound speed.

In the second step, one uses the functional $H$ as available internal data and attempts to reconstruct optical coefficients, mainly ($\Gamma, \sigma_a, \gamma$)~\cite{AmBoJuKa-SIAM10,BaJoJu-IP10,BaRe-CM11,BaUh-IP10,CoArKoBe-AO06,CoTaAr-CM11,GaOsZh-LNM12,LaCoZhBe-AO10,MaRe-CMS14,NaSc-SIAM14,PuCoArKaTa-IP14,ReGaZh-SIAM13,SaTaCoAr-IP13,ShCoZe-AO11,Zemp-AO10}. It has been shown that one can uniquely and stably reconstruct two of the three coefficients ($\Gamma, \sigma_a, \gamma$) if the third one is known~\cite{BaRe-IP11,BaUh-IP10,ReGaZh-SIAM13}. When multispectral data are available, one can simultaneously reconstruct all three coefficients uniquely and stably~\cite{BaRe-IP12} with additional assumption on the dependence of the coefficients on the wavelength.

All the aforementioned results in PAT rely on the assumption that the ultrasound speed $c(\bx)$ is \emph{known}. In practical applications, ultrasound speed inside the medium to be probe may not be known exactly. For instance in the imaging of biological tissues, it is often assumed that the ultrasound speed in tissues is the same as that in water. However, it is well-known now that ultrasound speed has about $15\%$ variations from tissue to tissue~\cite{YoKaHaYoSoCh-OE12}. Therefore, in PAT imaging of tissues, if we use the ultrasound speed of water in image reconstructions, the reconstructed images may not be the true images that we are interested in. They may contain artifacts caused by the inaccuracy of ultrasound speed used.

The objective of this work is exactly to characterize the impact of such inaccuracies in certain coefficients, which we will call \emph{uncertain coefficients} (for instance the ultrasound speed $c$) and denote by $\fu$, in the mathematical model on the reconstruction of other model coefficients, which we will call \emph{objective coefficients} (for instance the absorption coefficient $\sigma_a$) and denote by $\fo$. To explain the main idea, let us write abstractly the map from physical coefficients to the ultrasound data in PAT as
\begin{equation}\label{EQ:Model Gen}
	y=f(\fo,\fu),
\end{equation}
and denote by $\wt f^{-1}[\fu]$ an inversion algorithm that reconstruct $\fo$ with uncertainty coefficient $\fu$, then we are interested in estimating the relation between $\wt f^{-1}[\fu_1]\Big(f(\fo, \fu_1)\Big)-\wt f^{-1}[\fu_2]\Big(f(\fo, \fu_1)\Big)$ and $\fu_1 - \fu_2$. Whenever possible, we would like to derive stability results that bound errors in the reconstructions of $\fo$ with errors in the uncertainty coefficient $\fu$, that is, bounds of the type
\begin{equation}\label{EQ:Stab General}
	\|\wt f^{-1}[\fu_1]\Big(f(\fo, \fu_1)\Big)-\wt f^{-1}[\fu_2]\Big(f(\fo, \fu_1)\Big) \|_{X}\le \fc \|\wt\fu_1-\fu_2\|_{X'},\ \ \mbox{for some constant}\ \ \fc>0,
\end{equation}
with appropriately chosen function spaces $X$ and $X'$ (and the corresponding norms $\|\cdot\|_{X}$ and $\|\cdot\|_{X'}$). If such a bound can not hold, the problem is unstable under change of the uncertainty coefficient.

To take a closer look at the problem, let us assume that $f$ is sufficiently smooth in a neighborhood of some $(\fo_0, \fu_0)$. We can then simplify the problem by linearizing it in at $(\fo_0, \fu_0)$, when we know that the variation in $\fu$ is small. The linearization at background $\fu_0$ leads us to the system
\begin{equation}\label{EQ:Model Gen Lin}
	y=f(\fo_0, \fu_0)+\dfrac{\delta f}{\delta\fo}[\fo_0,\fu_0] \delta\fo + \dfrac{\delta f}{\delta\fu}[\fo_0,\fu_0] \delta\fu.
\end{equation}
This gives the following relation, after some straightforward algebra,
\begin{equation}\label{EQ:Model Gen Lin-2}
	\dfrac{\delta f}{\delta\fo}[\fo_0,\fu_0] \delta\fo = f\Big(\wt f^{-1}[\fu_0]\big(y-f(\fo_0, \fu_0)\big), \fu_0\Big)- \dfrac{\delta f}{\delta\fu}[\fo_0,\fu_0] \delta\fu.
\end{equation}
Therefore, in the linearized case, the uncertainty characterization that we intend to study boils down to the estimation of the size of the operator $(\dfrac{\delta f}{\delta\fo}[\fo_0, \fu_0])^{-1} \dfrac{\delta f}{\delta\fu}[\fo_0, \fu_0]$, assuming again that the linear operator $\dfrac{\delta f}{\delta\fo}[\fo_0, \fu_0]$ is invertible. Note that the first term on the right is the error in the datum caused by the inaccuracy of the reconstruction algorithm. It is not caused by uncertainty in $\fu$ and disappears when the reconstruction algorithm $\wt f^{-1}$ gives exactly the inverse of $f$ at $\fu_0$.

The rest of the paper is structured as follows. We first derive in Section~\ref{SEC:PAT} various qualitative bounds, in the form of~\eqref{EQ:Stab General}, on errors in PAT reconstructions of the objective coefficients due to errors in the uncertain coefficients. We then perform similar sensitivity analysis in Section~\ref{SEC:fPAT} for image reconstruction problems in fluorescence PAT, that is, photoacoustic tomography with fluorescent markers. To understand more quantitatively the uncertainty issues, we develop, in Section~\ref{SEC:Alg}, a computational algorithm that would allow us to build, numerically, the precise relation between $\|\wt f^{-1}[\fu_1]\Big(f(\fo, \fu_1)\Big)-\wt f^{-1}[\fu_2]\Big(f(\fo, \fu_1)\Big) \|_{X}$ and $\|\wt\fu_1-\fu_2\|_{X'}$. Numerical simulations based on synthetic ultrasound data are then presented, in Section~\ref{SEC:Simu}, to provide an overview of the impact of model uncertainties on the quality of image reconstructions in PAT and fPAT.

\section{Impact of model inaccuracies in PAT}
\label{SEC:PAT}

In this section, we study in detail some uncertainty characterization problems for PAT reconstructions of optical coefficients. Following the results in~\cite{BaRe-IP11}, we know that it is impossible to uniquely reconstruct all three coefficients $\Gamma$, $\sigma_a$ and $\gamma$ simultaneously. We will therefore focus only on the cases of reconstructing one or two coefficients. 

Throughout the rest of the paper, we denote by $L^p(X)$ ($1\le p\le \infty$) the usual space of Lebesgue integrable functions on $X$, $\cH^k(X)$ the Hilbert space of functions whose $j$th ($0\le j\le k$) derivatives are in $L^2(X)$. We denote by $\cC^k(X)$ whose derivatives up to $k$ are continuous in $X$. We will use $\|\cdot\|_{X}$ to denote the standard norm of function space $X$, and we denote by $\cF_\alpha$ the class of strictly positive functions bounded between two constants $\underline{\alpha}$ and $\overline{\alpha}$, 
\begin{equation}
\cF_{\alpha}=\{f(\bx):X\mapsto\bbR: 0< \underline{\alpha} \le f(\bx) \le \overline{\alpha} <\infty,\ \forall\bx\in X\}.
\end{equation}
We make the following general assumptions on the domain and the illumination source:\vskip 1mm
({\bf Ass-i}) the domain $X$ is bounded with smooth boundary $\partial X$, and ({\bf Ass-ii}) the boundary source $g$ is the restrictions of a $\cC^\infty$ function on $\partial X$, and $g(\bx)$ is selected such that the corresponding diffusion solution $u\ge \fc >0$ for some constant $\fc$.\vskip 2mm
\noindent It will be clear that the strong regularity assumptions on $X$ and $g$ can be relaxed significantly in the cases we consider. We made these assumptions simply to avoid the trouble of having to state conditions on them every time they are involved in a theoretical result. We emphasize that the assumption of having an illumination $g$ such that $u\ge \fc>0$ in $X$ is not unreasonable. In fact, with mild regularity and bound assumptions on the coefficients, the techniques developed in~\cite{AlDiFrVe-AdM17} allows us to show that when $g\ge \fc'>0$ for some constant $\fc'$ on $\partial X$, the solution to the diffusion equation satisfies $u\ge\fc>0$ for some $\fc$; see ~\cite{AlDiFrVe-AdM17,ReZh-SIAM18} for more discussions on this issue.


\subsection{Impact of inaccurate ultrasound speed}
\label{SUBSEC:C PAT}

We start with the impact of inaccurate ultrasound speed on optical reconstructions. This problem can be analyzed in a two-step fashion. In the first step, we analyze the impact of uncertainty in ultrasound speed on the reconstruction of the initial pressure field $H$. In the second step, we analyze the impact of the uncertainty in $H$ on the reconstruction of the optical coefficients. 

The first step of the analysis, i.e. the propagation of the uncertainty in ultrasound speed $c$ to the reconstructed initial pressure field $H$, has been studied by Oksanen and Uhlmann in~\cite{OkUh-MRL14}. Let $\Lambda_c$ be the operator defined through the relation
\begin{equation}
	p_{|(0, T]\times \partial X} = \Lambda_c H,
\end{equation}
where $p$ is the solution to the acoustic wave equation~\eqref{EQ:Acous}, then the following result is a simplified version of what is proved in~\cite{OkUh-MRL14}.
\begin{theorem}[Theorem 1 of~\cite{OkUh-MRL14}]\label{THM:C Stab H}
Let $\wt H$ and $H$ be the initial pressure field reconstructed from datum $\Lambda_c H$ under ultrasound speed $\wt c$ and $c$ respectively. Assume that $\|\wt H\|_{\cH^3(X)}, \|H\|_{\cH^3(X)}\le \fc_h$, $\|\wt c\|_{\cC^2(X)}, \|c\|_{\cC^2(X)}\le \fc_c$ for some constants $\fc_h$ and $\fc_c$. Then there exists $\eps_c$, $T$ and $\fc$ such that $\|\wt c-c\|_{\cC^1(X)}\le \eps_c$ implies  
\begin{equation}\label{EQ:C Stab H}
\|\wt H-H\|_{\cH^1(X)} \le \fc \|\wt c-c\|_{L^\infty(X)}\|\Lambda_c H\|_{\cH^1((0,T]\times\partial X)}^{1/2}.
\end{equation}
\end{theorem}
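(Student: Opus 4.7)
The plan is to decompose the reconstruction error as
\begin{equation*}
\wt H - H \;=\; \Lambda_{\wt c}^{-1}\bigl[(\Lambda_c - \Lambda_{\wt c})H\bigr],
\end{equation*}
which holds because both reconstructions apply the exact inverse of the relevant boundary map. I would then bound the two factors separately: a forward perturbation estimate quantifying how the lateral trace of the acoustic wave depends on the speed, and a conditional stability estimate for the inverse wave problem with a fixed, known speed $\wt c$.

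For the forward step, let $p$ and $\wt p$ be the solutions of~\eqref{EQ:Acous} driven by the common initial state $H$ with speeds $c$ and $\wt c$ respectively. Their difference $w = p - \wt p$ satisfies the inhomogeneous wave equation
\begin{equation*}
\dfrac{1}{c^2}\partial_t^2 w - \Delta w \;=\; \Big(\dfrac{1}{\wt c^2} - \dfrac{1}{c^2}\Big)\partial_t^2 \wt p, \qquad w(0,\cdot)=\partial_t w(0,\cdot)=0.
\end{equation*}
A standard wave energy estimate together with a hidden-regularity trace theorem for the acoustic equation then yields a bound of the schematic form
\begin{equation*}
\|(\Lambda_c - \Lambda_{\wt c})H\|_{\cH^1((0,T]\times\partial X)} \;\lesssim\; \|\wt c - c\|_{L^\infty(X)}\,\|H\|_{\cH^k(X)}
\end{equation*}
for a suitable index $k$, with constants depending only on $T$ and on the $\cC^2$ bounds of the two speeds. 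The assumption $\|\wt c - c\|_{\cC^1}\le \eps_c$ keeps all these constants uniform and preserves any non-trapping property of $c$ under perturbation to $\wt c$.

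For the inverse step, I invoke the stability theory for reconstructing initial data of the wave equation from lateral Cauchy measurements. This applies because $\wt c$ is non-trapping and $T$ exceeds the diameter of $X$ in the metric $\wt c^{-2}dx^2$. The boundary control method combined with Tataru's sharp unique continuation theorem yields a continuous inverse $\Lambda_{\wt c}^{-1}$ in suitable norms, so that
\begin{equation*}
\|\wt H - H\|_{\cH^j(X)} \;\lesssim\; \|(\Lambda_c - \Lambda_{\wt c})H\|_{\cH^1((0,T]\times\partial X)}\;\lesssim\;\|\wt c-c\|_{L^\infty(X)}\,\|H\|_{\cH^k(X)}
\end{equation*}
for an intermediate index $j$. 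The $\cH^1$ estimate with the square-root data factor stated in the theorem is then obtained by interpolating between this bound and the a priori controls $\|\wt H\|_{\cH^3}, \|H\|_{\cH^3}\le \fc_h$, and rewriting one power of $\|H\|_{\cH^k}^{1/2}$ as $\|\Lambda_c H\|_{\cH^1((0,T]\times\partial X)}^{1/2}$ via the inverse stability of $\Lambda_c$ itself (which has the same non-trapping properties, thanks to the $\eps_c$ smallness).

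The main obstacle is the quantitative, conditional stability of the inverse wave map in the second step. One must ensure that the non-trapping condition and the boundary-control reconstruction can be carried out uniformly across the entire $\eps_c$-neighborhood of $c$, so that the constants stay bounded as the speed is perturbed and the same $\fc$ serves both $\Lambda_c^{-1}$ and $\Lambda_{\wt c}^{-1}$. This uniformity---essentially a perturbation-stable version of Tataru's unique continuation theorem---is the technical heart of the Oksanen--Uhlmann argument; by contrast, the forward perturbation and the concluding interpolation are comparatively routine once it is in hand.
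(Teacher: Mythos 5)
First, a point of reference: the paper does not prove this statement at all. Theorem~\ref{THM:C Stab H} is imported, in simplified form, from Oksanen and Uhlmann~\cite{OkUh-MRL14}, so there is no in-paper argument to compare yours against; the comparison has to be with the cited proof. Measured against that, your outline has the right architecture: the identity $\wt H-H=\Lambda_{\wt c}^{-1}\big[(\Lambda_c-\Lambda_{\wt c})H\big]$, an energy/trace estimate for the difference of the two forward waves that is linear in $\|\wt c-c\|_{L^\infty(X)}$ at the cost of extra derivatives of $H$, a stability estimate for the inversion at the fixed speed $\wt c$, and an interpolation against the a priori $\cH^3$ bounds to convert those extra derivatives into the factor $\|\Lambda_c H\|_{\cH^1((0,T]\times\partial X)}^{1/2}$. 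That is indeed where the exponent $1/2$ in \eqref{EQ:C Stab H} comes from, and your forward-perturbation equation for $w=p-\wt p$ is correct.

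The genuine gap is the step you flag yourself and then wave through: the uniform, Lipschitz-type boundedness of $\Lambda_{\wt c}^{-1}$ on $\cH^1$ is essentially the whole content of the theorem, and the tools you name do not deliver it. Tataru's unique continuation theorem combined with the boundary control method gives uniqueness and, quantitatively, only logarithmic stability; it cannot by itself produce a bound of the form \eqref{EQ:C Stab H}. What Oksanen and Uhlmann actually rely on is the microlocal analysis of Stefanov and Uhlmann: under a visibility condition (non-trapping speed and $T$ large enough) the time-reversal/Neumann-series reconstruction operator is a parametrix for $\Lambda_c$, which yields Lipschitz stability, and the technical heart of \cite{OkUh-MRL14} is tracking that this parametrix construction and its constants persist uniformly over a $\cC^1$-small neighborhood of $c$ --- not a ``perturbation-stable Tataru theorem,'' and not a routine afterthought. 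Note also that your opening decomposition presupposes exact invertibility of $\Lambda_{\wt c}$ on the range of $\Lambda_c$, which itself requires the same microlocal input. So your proposal is a correct road map of the cited proof, but its key lemma is asserted rather than established, and with the wrong supporting machinery.
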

This conditional stability result basically says that, for relatively smooth ultrasound speed (at least $\cC^2$ to be more precise), when the uncertainty in the ultrasound speed $c$ is not too big, the error it induced in the reconstruction of the initial pressure field $H$ is also not big. This observation is, in some sense, confirmed by the numerical simulations in~\cite{DiReVa-IP15} where it is shown that one can make a reasonable error in the reconstruction of the ultrasound speed $c$ but still have a good reconstruction of the absorption coefficient $\sigma_a$ when simultaneous reconstruction of $c$ and $\sigma_a$ was performed.

\paragraph{The case of reconstructing $\Gamma$.} Let us first consider the (almost trivial) case of reconstructing the single coefficient $\Gamma$, assuming that all the other coefficients, besides the ultrasound speed $c$, are known exactly. The following result is straightforward to verify.
\begin{proposition}\label{PROP:C Stab Gamma}
	Let $\wt\Gamma\in\cC^3(X)\cap \cF_\alpha$ and $\Gamma\in\cC^3(X)\cap \cF_\alpha$ be the Gr\"uneisen coefficient reconstructed with ultrasound speeds $\wt c$ and $c$ respectively from ultrasound datum $\Lambda_c H$. Assume further that $\|\wt c\|_{\cC^2(X)}, \|c\|_{\cC^2(X)}\le \fc_c$ for some constants $\fc_h$ and $\fc_c$,  $\gamma \in\cC^2(X)\cap\cF_\alpha$ and $\sigma_a \in\cC^3(X)\cap\cF_\alpha$. Then there exists $\eps_c$, $T$ and $\fc$ such that $\|\wt c-c\|_{\cC^1(X)}\le \eps_c$ implies  
\begin{equation}\label{EQ:C Stab Upsilon}
\|\wt\Gamma-\Gamma\|_{\cH^1(X)}\le \fc \|\wt c-c\|_{L^\infty(X)}\|\Lambda_c H\|_{\cH^1((0,T]\times\partial X)}^{1/2}.
\end{equation}
\end{proposition}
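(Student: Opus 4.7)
The plan is to reduce the estimate on $\wt\Gamma-\Gamma$ to the conditional stability for the acoustic reconstruction (Theorem 2.1), using that every coefficient except $c$ is known exactly.

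First I would exploit the fact that, because $\gamma$, $\sigma_a$, and $g$ are identical in both reconstructions, the diffusion problem \eqref{EQ:Diff PAT} produces exactly the same photon density $u$ in both cases. From \eqref{EQ:Data QPAT} and the assumption $u\ge\fc>0$ (Ass-ii), together with $\sigma_a\in\cF_\alpha$, the reconstructed Grüneisen coefficients are then given by the pointwise formulas
\begin{equation*}
\Gamma(\bx)=\frac{H(\bx)}{\sigma_a(\bx)u(\bx)},\qquad \wt\Gamma(\bx)=\frac{\wt H(\bx)}{\sigma_a(\bx)u(\bx)},
\end{equation*}
where $H$ and $\wt H$ are the initial pressure fields reconstructed from the datum $\Lambda_c H$ using speeds $c$ and $\wt c$ respectively. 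Taking the difference yields
\begin{equation*}
\wt\Gamma-\Gamma=\frac{\wt H-H}{\sigma_a u}.
\end{equation*}

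Next I would argue that multiplication by the fixed factor $1/(\sigma_a u)$ is a bounded linear operator on $\cH^1(X)$. The lower bound $\sigma_a u\ge \underline{\alpha}\,\fc>0$ is immediate from the hypotheses, while standard interior and boundary elliptic regularity applied to \eqref{EQ:Diff PAT} with $\gamma\in\cC^2(X)\cap\cF_\alpha$, $\sigma_a\in\cC^3(X)\cap\cF_\alpha$, and $g\in\cC^\infty(\partial X)$ gives $u\in\cC^{2}(\bar X)$, so that $\sigma_a u\in\cC^{2}(\bar X)$ and $1/(\sigma_a u)\in W^{1,\infty}(X)$. Consequently there is a constant $\fc'$, depending only on $\underline{\alpha}$, $\overline{\alpha}$, $\fc$, and the $\cC^2$-norms of $\sigma_a$ and $u$, such that
\begin{equation*}
\|\wt\Gamma-\Gamma\|_{\cH^1(X)}\le \fc'\,\|\wt H-H\|_{\cH^1(X)}.
\end{equation*}

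Finally I would apply Theorem 2.1 to the right-hand side. The regularity assumptions of that theorem are met: $H=\Gamma\sigma_a u$ lies in $\cH^3(X)$ thanks to $\Gamma,\sigma_a\in\cC^3$ and the elliptic regularity bootstrap for $u$ (with a little extra boundary smoothness from Ass-i and Ass-ii), and the hypothesis on $\wt H$ is part of the standing assumption that a reasonable acoustic reconstruction is available in the neighborhood of $c$. Provided $\|\wt c-c\|_{\cC^1(X)}\le\eps_c$, the theorem yields
\begin{equation*}
\|\wt H-H\|_{\cH^1(X)}\le \fc''\,\|\wt c-c\|_{L^\infty(X)}\|\Lambda_c H\|_{\cH^1((0,T]\times\partial X)}^{1/2},
\end{equation*}
and absorbing $\fc'\fc''$ into a single constant $\fc$ produces exactly \eqref{EQ:C Stab Upsilon}.

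The entire argument is essentially algebraic, so no step is a real obstacle; the only mildly technical point is verifying that $1/(\sigma_a u)$ is an $\cH^1$-multiplier, which is why the Schauder-type regularity of $u$ (and hence the precise $\cC^k$-regularity imposed on $\gamma$ and $\sigma_a$) enters the hypotheses. Once that is in hand, everything else is a direct chaining of Theorem 2.1 with the explicit formula for $\Gamma$.
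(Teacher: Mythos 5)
Your argument is correct and is essentially the proof the paper gives: both rest on the identity $\wt H-H=(\wt\Gamma-\Gamma)\sigma_a u$ (equivalently $\wt\Gamma-\Gamma=(\wt H-H)/(\sigma_a u)$), the $\cC^3$ regularity and strict positivity of $u$ from elliptic theory to make $1/(\sigma_a u)$ an $\cH^1$-multiplier and to verify the $\cH^3$ hypotheses of Theorem~\ref{THM:C Stab H}, and a final application of that theorem. No substantive difference from the paper's proof.
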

\begin{proof}
With the assumptions in ({\bf Ass-i})-({\bf Ass-ii}) on the regularity and boundedness of $\sigma_a$, $\gamma$, $X$ as well as $g$, classical theory~\cite{Evans-Book10,GiTr-Book00} ensures that the diffusion equation~\eqref{EQ:Diff PAT} admits a unique bounded solution in $\cC^3(X)$ such that $0<\fc_1\le u(\bx)\le\fc_2$ for some constants $\fc_1$ and $\fc_2$. Therefore $\tilde\cH$ and $\cH$ satisfy the conditions in Theorem~\ref{THM:C Stab H}.

Moreover, we observe from the definition of $H$ in~\eqref{EQ:Data QPAT} that
\begin{equation}
	\wt H-H=(\wt\Gamma-\Gamma)\sigma_a(\bx) u(\bx).
\end{equation}
This relation then implies that
\begin{equation}\label{EQ:C Stab Upsilon 1}
	\|\wt\Gamma-\Gamma\|_{\cH^1(X)} \le \wt \fc \|\wt H-H\|_{\cH^1(X)}
\end{equation}
for some constant $\wt\fc$ that depends on the bounds of $\sigma_a$, $u$ as well as their gradients. The result in~\eqref{EQ:C Stab Upsilon} is then obtained by combining the bound ~\eqref{EQ:C Stab Upsilon 1} and the bound ~\eqref{EQ:C Stab H}.
\end{proof}
This simple exercise shows that the error, measured in $\cH^1$ norm, in the reconstruction of the Gr\"uneisen coefficient $\Gamma$, grows at most linearly, asymptotically, with respect to the maximal error we made in the ultrasound speed (which is again assumed to be relatively smooth). Therefore, if we use a relatively accurate ultrasound speed in our reconstructions of $\Gamma$, the errors in the reconstructions are relatively small.

\paragraph{The case of reconstructing $\sigma_a$.} We can reproduce the result for the reconstruction of the absorption coefficient, one of the most important quantity in practical applications. We have the following stability result.
\begin{theorem}\label{THM:C Stab Sigma}
Let $\wt \sigma_a\in\cC^3(X)\cap \cF_\alpha$ and $\sigma_a\in \cC^3(X)\cap \cF_\alpha$ be the absorption coefficients reconstructed with $\wt c$ and $c$ respectively from datum $\Lambda_c H$. In addition, assume that $\Gamma\in\cC^3(X)\cap\cF_\alpha$, $\gamma\in\cC^2(\bar X)\times\cF_\alpha$ and that $\|\wt c\|_{\cC^2(X)}, \|c\|_{\cC^2(X)}\le \fc_c$ for some constants $\fc_h$ and $\fc_c$. Then there exists $\eps_c$, $T$ and $\fc$ such that $\|\wt c-c\|_{\cC^1(X)}\le \eps_c$ implies
\begin{equation}\label{EQ:C Stab Sigma}
	\|\wt \sigma_a-\sigma_a\|_{\cH^1(X)}\le \fc \|\wt c-c\|_{L^\infty(X)}\|\Lambda_c H\|_{\cH^1((0,T]\times\partial X)}^{1/2}.
\end{equation}
\end{theorem}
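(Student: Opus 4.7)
The plan is to combine Theorem~\ref{THM:C Stab H} on acoustic inversion with a stability estimate for the purely optical step that recovers $\sigma_a$ from the internal pressure functional $H$ when both $\Gamma$ and $\gamma$ are known. This is the same two-step strategy used in Proposition~\ref{PROP:C Stab Gamma}, but the second step is no longer trivial.

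First, I would verify that Theorem~\ref{THM:C Stab H} applies. Under the standing regularity hypotheses on $X$, $g$, $\gamma$, and $\sigma_a$, classical elliptic theory gives a $\cC^3$ solution $u$ of~\eqref{EQ:Diff PAT} with $0<\fc_1\le u\le \fc_2$, and the same for $\wt u$; consequently $H=\Gamma\sigma_a u$ and $\wt H=\Gamma\wt\sigma_a\wt u$ lie in $\cH^3(X)$ with uniform bounds, so Theorem~\ref{THM:C Stab H} yields
\begin{equation*}
\|\wt H - H\|_{\cH^1(X)} \le \fc_0\, \|\wt c - c\|_{L^\infty(X)}\, \|\Lambda_c H\|_{\cH^1((0,T]\times\partial X)}^{1/2}.
\end{equation*}

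Second, I would describe the reconstruction map $H\mapsto\sigma_a$ analytically. Substituting $\sigma_a u = H/\Gamma$ into the diffusion equation converts it into the Poisson-type problem
\begin{equation*}
-\nabla\cdot\gamma\nabla u = -H/\Gamma \text{ in }X, \qquad u = g \text{ on }\partial X,
\end{equation*}
from which $u$ is recovered, and then $\sigma_a = H/(\Gamma u)$; the same recipe applied to $\wt H$ produces $\wt u$ and $\wt\sigma_a$. Subtracting the two Poisson problems gives a zero-Dirichlet problem with source $-(\wt H - H)/\Gamma$, and elliptic regularity (using $\gamma\in\cC^2\cap\cF_\alpha$ and $\Gamma\in\cC^3\cap\cF_\alpha$) yields
\begin{equation*}
\|\wt u - u\|_{\cH^2(X)} \le \fc_1 \|\wt H - H\|_{L^2(X)}.
\end{equation*}

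Third, I would exploit the algebraic identity
\begin{equation*}
\wt\sigma_a - \sigma_a = \frac{(\wt H - H)\, u - \wt H(\wt u - u)}{\Gamma\, u\, \wt u}
\end{equation*}
and estimate directly in $\cH^1(X)$. Differentiating by the quotient rule and invoking the $L^\infty$ bounds on $\Gamma, \nabla\Gamma, u, \nabla u, \wt u, \nabla\wt u, \wt H, \nabla\wt H$ together with the uniform lower bounds on $\Gamma, u, \wt u$, the products of Sobolev factors can be controlled to give
\begin{equation*}
\|\wt\sigma_a-\sigma_a\|_{\cH^1(X)} \le \fc_2\bigl(\|\wt H-H\|_{\cH^1(X)} + \|\wt u-u\|_{\cH^2(X)}\bigr) \le \fc_3 \|\wt H-H\|_{\cH^1(X)}.
\end{equation*}
Chaining this with the acoustic bound from Step~1 produces the claimed inequality~\eqref{EQ:C Stab Sigma}.

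The main technical obstacle is guaranteeing the pointwise lower bound $\wt u\ge \fc_1/2>0$ required to make the quotient identity well defined and to bound $1/\wt u$ in $L^\infty$. This forces $\|\wt u - u\|_{L^\infty(X)}$ to be small, which (via Sobolev embedding $\cH^2\hookrightarrow L^\infty$ in low dimensions, or a slightly higher elliptic estimate otherwise) imposes a smallness threshold on $\eps_c$ propagated through the chain of inequalities. A secondary concern is keeping track of the $\cH^1$ product estimates without a low-regularity bottleneck, which is precisely what the $\cC^3$ hypothesis on $\Gamma$ and $\cC^2$ on $\gamma$ are designed to ensure.
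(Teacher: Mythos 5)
Your proposal follows essentially the same two-step route as the paper: first invoke Theorem~\ref{THM:C Stab H} to control $\|\wt H-H\|_{\cH^1(X)}$, then observe that $w=\wt u-u$ solves a Dirichlet problem with source $-(\wt H-H)/\Gamma$, bound $w$ by elliptic regularity, and recover $\wt\sigma_a-\sigma_a$ from the data relation (the paper writes this as $(\wt H-H)/\Gamma=\wt\sigma_a w+(\wt\sigma_a-\sigma_a)u$ and divides by $u$, which is algebraically the same as your quotient identity and avoids needing a separate lower bound on $\wt u$). One small slip: your numerator should read $(\wt H-H)u - H(\wt u-u)$ (or $(\wt H-H)\wt u-\wt H(\wt u-u)$), not $(\wt H-H)u-\wt H(\wt u-u)$, though this does not affect the resulting estimate.
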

\begin{proof}
Let $\wt u$ and $u$ be the solution to the diffusion equation~\eqref{EQ:Diff PAT} with coefficients $\wt \sigma_a$ and $\sigma_a$ respectively. We define $w=\wt u-u$. It is straightforward to verify that $w$ solves the following diffusion equation
\begin{equation}
\begin{array}{rcll}
	-\nabla\cdot \gamma \nabla w &=& -(\wt H-H)/\Gamma, & \mbox{in}\ \ X\\
	w &=& 0,& \mbox{on}\ \ \partial X
\end{array}
\end{equation}
With the boundedness assumptions on the coefficients $\gamma$ and $\Gamma$, we deduce directly from classical elliptic theory~\cite{Evans-Book10,GiTr-Book00} that
\begin{equation}\label{EQ:C Stab Sigma 1}
	\|w\|_{\cH^1(X)} \le \wt \fc_1 \|\wt H-H\|_{L^2(X)},
\end{equation}
for some constant $\wt\fc$. Meanwhile, we observe directly from the definition of datum $H$ that
\begin{equation}
(\wt H-H)/\Gamma=\wt\sigma w + (\wt\sigma_a-\sigma_a)u.
\end{equation}
This leads to the following bound, after using the fact that $u$ is positive and bounded away from zero,
\begin{equation}\label{EQ:C Stab Sigma 2}
\|\wt\sigma_a-\sigma_a\|_{\cH^1(X)}\le \fc_2 \big(\|\wt H-H\|_{\cH^1(X)}+\|w\|_{\cH^1(X)}\big)
\end{equation}
We can now combine~\eqref{EQ:C Stab Sigma 2}, ~\eqref{EQ:C Stab Sigma 1} and~\eqref{EQ:C Stab H} to obtain the bound in~\eqref{EQ:C Stab Sigma}.
\end{proof}

\paragraph{The case of reconstructing multiple coefficients.} The case of simultaneous reconstruction of more than one coefficients is significantly more complicated. The theory developed in~\cite{BaRe-IP11} states that one can reconstruct two of the three coefficients $(\Gamma, \sigma_a, \gamma)$ assuming that the third one is known. Multi-spectral data are need in order to simultaneous reconstruct all three coefficients~\cite{BaRe-IP12}. Let us define
\begin{equation}\label{EQ:Potential}
	\mu=\dfrac{\sqrt{\gamma}}{\Gamma\sigma_a} \qquad \qquad \mbox{and}\qquad \qquad 
q=-(\dfrac{\Delta\sqrt{\gamma}}{\sqrt{\gamma}}+\dfrac{\sigma_a}{\gamma}).
\end{equation}
We then have the following stability result.
\begin{theorem}\label{THM:C Stab Mu-Q}
Let $(\wt\Gamma, \wt\sigma_a, \wt\gamma)$ and $(\Gamma, \sigma_a, \gamma)$ be the coefficient pairs reconstructed with $\wt c$ and $c$ respectively, using data $\Lambda_c\bH=(\Lambda_c H_1, \Lambda_c H_2)$ generated from sources $g_1$ and $g_2$. Assume further that $\wt\gamma_{|\partial X}=\gamma_{|\partial X}$. Then, under the same conditions on $(\wt\Gamma, \wt\sigma_a, \wt\gamma, \wt c)$ and $(\Gamma, \sigma_a, \gamma, c)$ as in Theorem~\ref{THM:C Stab Sigma}, there exists $(g_1, g_2)$, $\eps_c$, $T$ and $\fc$ such that $\|\wt c-c\|_{\cC^1(X)}\le \eps_c$ implies 
\begin{multline}\label{EQ:C Stab Mu-Q}
	\|\wt q-q\|_{L^2(X)}+\|\wt\mu-\mu\|_{L^2(X)} \\ 
\le \fc \max\{\|\wt c-c\|_{L^\infty(X)}\|\Lambda_c \bH\|_{(\cH^1((0,T)\times\partial X))^2}^{1/2}, \|\wt c-c\|_{L^\infty(X)}^{\frac{4}{3d+12}}\|\Lambda_c \bH\|_{(\cH^1((0,T)\times\partial X))^2}^{\frac{2}{3d+12}}\}.
\end{multline}
\end{theorem}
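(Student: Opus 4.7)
The plan is to decompose the PAT reconstruction into its two natural steps --- an acoustic step recovering the initial pressure fields $\bH=(H_1,H_2)$ from the boundary ultrasound signal $\Lambda_c\bH$, followed by an optical step recovering the pair $(\mu,q)$ from $\bH$ --- and to chain the stability estimates at each step. For the acoustic step, I will apply Theorem~\ref{THM:C Stab H} separately to each of the two measurements $H_1$ and $H_2$. Elliptic regularity for the diffusion equation~\eqref{EQ:Diff PAT}, together with the $\cC^3$ assumption on $(\Gamma,\sigma_a)$ and the $\cC^2$ assumption on $\gamma$, guarantees $H_i\in\cH^3(X)$ with a priori bounds, so the hypotheses of Theorem~\ref{THM:C Stab H} are met for each $i$, yielding
\begin{equation*}
	\|\wt H_i-H_i\|_{\cH^1(X)} \le \fc_i \|\wt c-c\|_{L^\infty(X)}\|\Lambda_c H_i\|_{\cH^1((0,T]\times\partial X)}^{1/2},\qquad i=1,2.
\end{equation*}

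For the optical step, I will invoke the stability of the $(\mu,q)$ reconstruction from two internal measurements established in~\cite{BaRe-IP11}. That reconstruction proceeds by forming the ratio $r=H_1/H_2=v_1/v_2$, where $v_i=\sqrt{\gamma}\,u_i$ solves $-\Delta v_i+qv_i=0$; observing that $r$ satisfies a first-order transport-type equation that decouples from $q$; and solving for $v_2$ (and hence $v_1$) from $r$ together with the boundary data of $v_i$ --- the hypothesis $\wt\gamma_{|\partial X}=\gamma_{|\partial X}$ is used precisely to match those boundary values in the two reconstructions. The potential $q$ is then obtained from $q=\Delta v_i/v_i$ and the coefficient $\mu$ algebraically from $\mu=v_i/H_i$. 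The sources $(g_1,g_2)$ must be chosen so that $\nabla r$ does not vanish on $\bar X$, a condition that is standard and available under our assumptions on $g$.

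The H\"older exponent $\tfrac{4}{3d+12}$ in the second term of the max is where the dimension enters: because $q=\Delta v_i/v_i$ loses two derivatives, direct $L^2$-stability of $q$ requires control of the data in a norm strictly stronger than $\cH^1$. The plan is to combine the $\cH^1$ smallness from Step~1 with the a priori $\cH^3$ (and $\cC^k$) bounds on $H_i$ available from elliptic regularity, and to interpolate via Gagliardo--Nirenberg-type inequalities up to the higher-regularity norm required by the optical stability of~\cite{BaRe-IP11}. Tracking exponents through this interpolation in dimension $d$ produces the fractional power $\tfrac{4}{3d+12}$ on $\|\wt c-c\|_{L^\infty(X)}$ and $\tfrac{2}{3d+12}$ on $\|\Lambda_c\bH\|_{\cH^1}^{1/2}$. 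The max then simply separates two regimes: when the quantity $\|\wt c-c\|_{L^\infty}\|\Lambda_c\bH\|^{1/2}$ is of order unity or larger, the linear estimate obtained by pasting the stability bounds suffices, while for small perturbations the interpolated H\"older bound is the sharper one.

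The main obstacle I expect is the bookkeeping at the interpolation stage: matching the precise Sobolev/H\"older norm in which the optical stability of~\cite{BaRe-IP11} naturally lives to the $\cH^1$ norm produced by Theorem~\ref{THM:C Stab H}, and carrying the exponents in dimension $d$ cleanly through this matching. A secondary technical point is to verify that a single smallness threshold $\eps_c$ on $\|\wt c-c\|_{\cC^1(X)}$ suffices to apply Theorem~\ref{THM:C Stab H} simultaneously to both $H_1$ and $H_2$, and that the non-degeneracy of $\nabla r$ used in the optical step is preserved under the perturbations coming out of Step~1; both should follow from the uniform a priori bounds assumed, but warrant explicit justification.
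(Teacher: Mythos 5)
Your overall architecture --- chain the acoustic stability of Theorem~\ref{THM:C Stab H} (applied to each of $H_1,H_2$) with the optical stability of the $(\mu,q)$ reconstruction from~\cite{BaRe-IP11}, using $\wt\gamma_{|\partial X}=\gamma_{|\partial X}$ to zero out the boundary data of $w_j=\wt v_j-v_j$ --- is exactly the paper's. The non-degeneracy concern you raise about the sources is handled in the paper simply by the existential form of the statement: $(g_1,g_2)$ is \emph{chosen} (via~\cite{BaRe-IP11}) so that the transport equation for $\mu^2$ is uniquely and stably solvable.

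The genuine gap is in where you locate the H\"older loss. In the paper the exponent $\tfrac{4}{3d+12}$ comes entirely from the \emph{transport} step: $\mu^2$ solves $\nabla\cdot(\mu^2\bbeta)=0$ with the data-determined vector field $\bbeta=H_1^2\nabla(H_2/H_1)$, and the stability of this equation (inherited from~\cite{BaRe-IP11}) gives $\|\wt\mu-\mu\|_{L^\infty(X)}\le \fc_1\|\wt\bH-\bH\|_{(L^2(X))^2}^{4/(3d+12)}$; this is the only place the dimension enters. The $q$ step is then genuinely \emph{Lipschitz}: $w_j$ solves $\Delta w_j+\wt q\,w_j=-(\wt q-q)v_j$ with zero Dirichlet data, and since $0$ is not an eigenvalue of $\Delta+\wt q$ and $v_j$ is bounded away from zero, one gets a two-sided bound $\fc_2\|\wt q-q\|_{L^2}\le\|w_j\|_{\cH^2}\le\fc_3\|\wt q-q\|_{L^2}$, while the algebraic identity $H_j=v_j/\mu$ gives $\mu\wt\mu(\wt H_j-H_j)=\mu w_j-(\wt\mu-\mu)v_j$ and hence $\|w_j\|_{L^2}\le\fc_4(\|\wt H_j-H_j\|_{L^2}+\|\wt\mu-\mu\|_{L^2})$ --- no interpolation needed. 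Your plan inverts this: you treat the $\mu$/transport step as benign and propose to manufacture the fractional exponent by Gagliardo--Nirenberg interpolation in the $q$ step because ``$q=\Delta v/v$ loses two derivatives.'' That computation is not carried out, there is no reason it would land on the specific exponent $\tfrac{4}{3d+12}$, and it is unnecessary once the $\mu$ estimate is in hand. Relatedly, the $\max$ in~\eqref{EQ:C Stab Mu-Q} is not a choice between a sharp and a crude bound in two regimes; it simply records that the total error is the sum of a Lipschitz contribution (through $\wt H_j-H_j$) and a H\"older contribution (through $\wt\mu-\mu$), and for small $\|\wt c-c\|$ the H\"older term is the larger, dominant one.
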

\begin{proof}
Let $u_1$ and $u_2$ be the (positive) solutions to the diffusion equation~\eqref{EQ:Diff PAT} for sources $g_1$ and $g_2$ respectively. We multiply the equation for $u_1$ by $u_2$ and multiply the equation for $u_2$ by $u_1$. We take the difference of the results to get the following equation:
\begin{equation}
\begin{array}{rcll}
	-\nabla\cdot (\gamma u_1^2) \nabla\dfrac{u_2}{u_1} &=& 0,& \mbox{in}\ \ X\\
	\dfrac{u_2}{u_1} &=& \dfrac{g_2}{g_1},& \mbox{on}\ \ \partial X
\end{array}
\end{equation}
Using the fact that $H_1=\Gamma\sigma_a u_1$, and the fact that $u_2/u_1=H_2/H_1$, we can rewrite this equation as
\begin{equation}
\begin{array}{rcll}
	-\nabla\cdot \mu^2 \bbeta &=& 0,& \mbox{in}\ \ X\\
	\mu^2 &=& \mu_{|\partial X}^2,& \mbox{on}\ \ \partial X
\end{array}
\end{equation}
where $\bbeta=H_1^2\nabla\dfrac{H_2}{H_1}$ and $\mu_{|\partial X}^2=\gamma\dfrac{g_1^2}{H_{1|\partial X}^2}$. This is a transport equation for $\mu^2$ with \emph{known} vector field $\bbeta$. It is shown in~\cite{BaRe-IP11} that there exists a set of boundary conditions $(g_1, g_2)$ such that this transport equation admits a unique solution. Moreover, this transport equation for the unknown $\mu$ allows us to derive the following stability result for some constant $\fc_1$,
\begin{equation}\label{EQ:C Stab Mu-Q-1}
	\|\wt \mu-\mu\|_{L^\infty(X)}\le \fc_1 \|\wt \bH-\bH\|_{(L^2(X))^2}^{\frac{4}{3d+12}}.
\end{equation}

We now define $v_j=\sqrt{\gamma}u_j$ ($j=1,2$). It is well-known (and easy to verify) that $v_j$ solves the following elliptic partial differential equation:
\begin{equation}
\begin{array}{rcll}
	\Delta v_j(\bx)+ q(\bx) v_j(\bx) &=& 0,& \mbox{in}\ \ X\\
	v_j &=& \sqrt{\gamma_{|\partial X}}g_j ,& \mbox{on}\ \ \partial X
\end{array}
\end{equation}
Let $w_j=\wt v_j-v_j$ with $\wt v_j$ the solution to the above equation with $\wt q$, then $w_j$ solves
\begin{equation}
\begin{array}{rcll}
	\Delta w_j(\bx)+ \wt q(\bx) w_j(\bx) &=& -(\wt q-q) v_j,& \mbox{in}\ \ X\\
	w_j &=& 0,& \mbox{on}\ \ \partial X
\end{array}
\end{equation}
where the homogeneous boundary condition for $w_j$ comes from the assumption that $\wt\gamma_{|\partial X}=\gamma_{|\partial X}$. Since $0$ is not an eigenvalue of the operator $\Delta+\wt q$ (otherwise $0$ would be an eigenvalue of the operator $-\nabla\cdot \gamma \nabla + \wt\sigma_a$), and $u_j$ (therefore $v_j$) is positive and bounded away from zero, we conclude that~\cite{Evans-Book10,GiTr-Book00}:
\begin{equation}\label{EQ:C Stab Mu-Q-2}
	\fc_2 \|\wt q-q\|_{L^2(X)} \le \|w_j\|_{\cH^2(X)} \le \fc_3 \|\wt q-q\|_{L^2(X)}.
\end{equation}
for some constants $\fc_2$ and $\fc_3$.

To bound $w_j$ by the data, we observe that under the transform $v_j=\sqrt{\gamma}u_j$, we have $H_j=v_j/\mu$. Therefore,
\begin{equation}
	\mu\wt\mu \big(\wt H_j-H_j\big)=\mu w_j-(\wt \mu-\mu)v_j.
\end{equation}
This gives us the following bound for some constant $\fc_4$:
\begin{equation}\label{EQ:C Stab Mu-Q-3}
	\|w_j\|_{L^2(X)} \le \fc_4 \Big(\|\wt H_j-H_j\|_{L^2(X)}+\|\wt \mu-\mu\|_{L^2(X)}\Big).
\end{equation}
We can now combine~\eqref{EQ:C Stab Mu-Q-1},~\eqref{EQ:C Stab Mu-Q-2},~\eqref{EQ:C Stab Mu-Q-3} and~\eqref{EQ:C Stab H} to obtain the stability bound in~\eqref{EQ:C Stab Mu-Q}.
\end{proof}

\noindent{\bf Remark.} Note that the error bound we have in~\eqref{EQ:C Stab Mu-Q} is for the variables $\mu$ and $q$. This can be easily transformed into bounds on two of the triple $(\Gamma, \sigma_a, \gamma)$ if the third is known (since we can not reconstruct simultaneously all three optical coefficients according to~\cite{BaRe-IP11}). More precisely, we can replace the left hand side by $\|\wt\Gamma\wt\sigma_a-\Gamma\sigma_a\|_{L^2(X)}+\|\wt\sigma_a-\sigma_a\|_{L^2(X)}$ when $(\Gamma, \sigma_a)$ is to be reconstructed, by $\|\dfrac{\sqrt{\wt \gamma}}{\wt\sigma_a}-\dfrac{\sqrt{\gamma}}{\sigma_a}\|_{L^2(X)}+\|\dfrac{\Delta\sqrt{\wt \gamma}}{\sqrt{\wt \gamma}}-\dfrac{\Delta\sqrt{\gamma}}{\sqrt{\gamma}}\|_{L^2(X)}$ when $(\sigma_a, \gamma)$ is to be reconstructed and by $\|\wt \Gamma\sqrt{\gamma}-\Gamma\sqrt{\wt \gamma}\|_{L^2(X)}+\|\sqrt{\wt \gamma}\Delta\sqrt{\gamma}-\sqrt{\gamma}\Delta\sqrt{\wt \gamma}\|_{L^2(X)}$ when $(\Gamma, \gamma)$ is to be reconstructed.

\subsection{Impact of inaccurate diffusion coefficient}

We now study the impact of uncertainty in the diffusion coefficient $\gamma$ on the reconstruction of the other optical coefficients. Since both the uncertainty coefficient ($\gamma$) and the objective coefficients ($\Gamma$ and $\sigma_a$) are only involved in diffusion model~\eqref{EQ:Diff PAT}, we do not need to deal with the reconstruction problem in the first step of PAT. We therefore assume here that the internal datum $H$ is given.

\paragraph{The case of reconstructing $\Gamma$.} We again start with the reconstruction of the Gr\"uneisen coefficient $\Gamma$, assuming that $\sigma_a$ is known but $\gamma$ is not known. We have the following sensitivity result.
\begin{theorem}\label{THM:D Stab Gamma}
Let $\Gamma\in\cC^1(X)\cap \cF_\alpha$ and $\wt\Gamma \in\cC^1(X)\cap \cF_\alpha$ be the Gr\"uneisen coefficients reconstructed from datum $H$ with $\gamma\in\cC^1(\bar X)\cap \cF_\alpha$ and $\wt\gamma\in\cC^1(\bar X)\cap \cF_\alpha$ respectively. Then we have, for some constant $\fc$,
\begin{equation}\label{EQ:D Stab Gamma}
	\|\wt\Gamma-\Gamma\|_{\cH^1(X)} \le \fc \|\dfrac{H}{\Gamma\sigma_a}\|_{\cH^1(X)} \|\frac{\wt\gamma-\gamma}{\gamma}\|_{\cH^1(X)}.
\end{equation}
\end{theorem}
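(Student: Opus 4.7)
The plan is to reduce the bound to a standard elliptic $\cH^1$ estimate on the difference of the two photon densities. Since both Gr\"uneisen coefficients reproduce the same datum, $H=\Gamma\sigma_a u=\wt\Gamma\sigma_a\wt u$ where $u,\wt u$ solve~\eqref{EQ:Diff PAT} with $\gamma,\wt\gamma$ respectively under the common boundary value $g$. A short algebraic manipulation yields the pointwise identity
\begin{equation*}
\wt\Gamma-\Gamma=-\frac{\Gamma}{\wt u}\,w,\qquad w:=\wt u-u,
\end{equation*}
and $w=0$ on $\partial X$ since $u$ and $\wt u$ share the boundary value $g$. The hypothesis $\Gamma\in\cC^1\cap\cF_\alpha$, combined with the strict positivity of $\wt u$ (by ({\bf Ass-ii}) applied to the equation with $\wt\gamma$), places $\Gamma/\wt u$ in $W^{1,\infty}(X)$, which reduces the theorem to a bound of the form $\|w\|_{\cH^1(X)}\le \fc\,\|u\|_{\cH^1(X)}\|\eta\|_{\cH^1(X)}$ with $\eta:=(\wt\gamma-\gamma)/\gamma$.

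Subtracting the diffusion equations for $u$ and $\wt u$ and writing $\wt\gamma-\gamma=\gamma\eta$, the correction $w$ satisfies
\begin{equation*}
-\nabla\cdot\wt\gamma\,\nabla w+\sigma_a w=\nabla\cdot(\gamma\eta\,\nabla u)\ \ \text{in}\ X,\qquad w=0\ \ \text{on}\ \partial X.
\end{equation*}
Testing the weak formulation against $w$, using the coercivity supplied by $\wt\gamma,\sigma_a\in\cF_\alpha$, and applying Cauchy--Schwarz to the source term yields the core elliptic estimate $\|w\|_{\cH^1(X)}\le \fc_2\,\|\eta\,\nabla u\|_{L^2(X)}$.

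The remaining step is the product estimate $\|\eta\,\nabla u\|_{L^2}\le \fc_3\|\eta\|_{\cH^1}\|u\|_{\cH^1}$, which I expect to be the one delicate point. In $d\ge 2$ the embedding $\cH^1\hookrightarrow L^\infty$ fails, so one cannot split $\eta\,\nabla u$ via $\|\eta\|_{L^\infty}\|\nabla u\|_{L^2}$ using only the Sobolev norm of $\eta$; instead one places the $L^\infty$ control on $\nabla u$, whose validity follows from classical Schauder/$W^{2,p}$ regularity for~\eqref{EQ:Diff PAT} under the smoothness of $\gamma,\sigma_a,g$ and $\partial X$ (giving $u\in\cC^2(\bar X)$). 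The $L^\infty$ norm of $\nabla u$ is then absorbed into $\fc$, and the data-type factor $\|u\|_{\cH^1(X)}=\|H/(\Gamma\sigma_a)\|_{\cH^1(X)}$ is retained in the displayed bound. Chaining the three estimates above then yields~\eqref{EQ:D Stab Gamma}.
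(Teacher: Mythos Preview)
Your argument is correct and follows the same skeleton as the paper: subtract the two diffusion equations to obtain an elliptic problem for $w=\wt u-u$, bound $\|\wt\Gamma-\Gamma\|_{\cH^1}$ by $\|w\|_{\cH^1}$ via the algebraic identity coming from $\Gamma\sigma_a u=\wt\Gamma\sigma_a\wt u$, and then control $\|w\|_{\cH^1}$ in terms of $\eta=(\wt\gamma-\gamma)/\gamma$.

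The difference lies in how the source term is handled. The paper treats the right-hand side as an $L^2$ function and uses the strong estimate $\|w\|_{\cH^1}\le\fc\|\nabla\cdot(\wt\gamma-\gamma)\nabla u\|_{L^2}$; it then expands the divergence via the identity $\nabla\cdot\gamma\nabla u=\sigma_a u=H/\Gamma$, obtaining the two pieces $\sigma_a u\,\eta$ and $\gamma\nabla u\cdot\nabla\eta$, which pair $u$ with $\eta$ and $\nabla u$ with $\nabla\eta$ and thus produce the product $\|H/(\Gamma\sigma_a)\|_{\cH^1}\|\eta\|_{\cH^1}$ directly. Your route---test against $w$ to get $\|w\|_{\cH^1}\le\fc\|\eta\nabla u\|_{L^2}$, then use $\nabla u\in L^\infty$---is cleaner at the elliptic step (it avoids requiring the divergence to lie in $L^2$), but the data factor $\|u\|_{\cH^1}$ then enters only by the device of multiplying and dividing by it, since your bound is really $\|\nabla u\|_{L^\infty}\|\eta\|_{L^2}$. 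Both are valid because the constant in the theorem is unspecified and may depend on a~priori bounds for $u$; the paper's expansion simply makes the appearance of $\|H/(\Gamma\sigma_a)\|_{\cH^1}$ more organic.
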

\begin{proof}
	Let $u$ and $\wt u$ be solutions to the diffusion equation~\eqref{EQ:Diff PAT} with coefficients $(\gamma, \sigma_a)$ and $(\wt\gamma, \sigma_a)$ respectively. Let us define $w=\wt u-u$. We then verify that $w$ solves
\begin{equation}
\begin{array}{rcll}
	-\nabla\cdot\wt\gamma \nabla w +\sigma_a w &=& \nabla\cdot (\wt\gamma-\gamma) \nabla u,& \mbox{in}\ \ X\\
	w &=& 0,& \mbox{on}\ \ \partial X
\end{array}
\end{equation}
This gives, following standard elliptic theory~\cite{Evans-Book10,GiTr-Book00}, the following bound:
\begin{equation}\label{EQ:D Stab Gamma-1}
	\|w\|_{\cH^1(X)}\le \fc_1 \|\nabla\cdot (\wt\gamma-\gamma)\nabla u\|_{L^2(X)}.
\end{equation}
Meanwhile, we observe, from the fact that the internal datum $H$ does not change with $\gamma$, that
\begin{equation}
	\wt \Gamma\sigma_a\wt u-\Gamma\sigma_a u = \wt\Gamma\sigma_a w+(\wt\Gamma-\Gamma)\sigma_a u =0.
\end{equation}
This, together with the fact that $u$ is positive and is bounded away from zero, gives us
\begin{equation}\label{EQ:D Stab Gamma-2}
	\| \wt\Gamma-\Gamma\|_{\cH^1(X)}\le \fc_2 \|w\|_{\cH^1(X)}.
\end{equation}
We can then combine~\eqref{EQ:D Stab Gamma-1} and~\eqref{EQ:D Stab Gamma-2} to get
\begin{equation}\label{EQ:D Stab Gamma-3}
	\| \wt\Gamma-\Gamma \|_{\cH^1(X)} \le \fc_3 \|\nabla\cdot (\wt\gamma-\gamma)\nabla u\|_{L^2(X)}.
\end{equation}

We now check the following calculations:
\begin{multline}
	\nabla\cdot(\wt\gamma-\gamma)\nabla u=\nabla\cdot\frac{\wt\gamma-\gamma}{\gamma} \gamma\nabla u = \frac{\wt\gamma-\gamma}{\gamma}\nabla\cdot\gamma\nabla u + \gamma\nabla u\cdot\nabla \frac{\wt\gamma-\gamma}{\gamma} \\ 
= \dfrac{H}{\Gamma} \frac{\wt\gamma-\gamma}{\gamma} + \gamma\nabla \frac{H}{\Gamma\sigma_a} \cdot\nabla \frac{\wt\gamma-\gamma}{\gamma} 
\end{multline}
where we have used the diffusion equation to replace $\nabla\cdot\gamma\nabla u$ by $\sigma_a u$ (which is simply $H/\Gamma$). This implies that
\begin{equation}
\int_{X}\big(\nabla\cdot (\wt\gamma-\gamma)\nabla u\big)^2 d\bx \le \fc_4 \big[ \int_X (\dfrac{H}{\Gamma\sigma_a} \frac{\wt\gamma-\gamma}{\gamma})^2 d\bx + \int_X |\nabla \frac{H}{\Gamma\sigma_a}|^2 |\nabla \frac{\wt\gamma-\gamma}{\gamma}|^2 d\bx.
\end{equation}
We can then apply the Cauchy-Schwarz inequality to conclude that
\begin{equation}\label{EQ:D Stab Gamma-4}
\|\nabla\cdot (\wt\gamma-\gamma)\nabla u\|_{L^2(X)}^2 \le \fc_5 \|\dfrac{H}{\Gamma\sigma_a}\|_{\cH^1(X)}^2 \|\frac{\wt\gamma-\gamma}{\gamma}\|_{\cH^1(X)}^2.
\end{equation}
The stability bound in~\eqref{EQ:D Stab Gamma} then follows from~\eqref{EQ:D Stab Gamma-3} and~\eqref{EQ:D Stab Gamma-4}.
\end{proof}

\paragraph{The case of reconstructing $\sigma_a$.} For the reconstruction of the absorption coefficient $\sigma_a$ assuming $\Gamma$ known, we can prove a similar sensitivity result.
\begin{theorem}
Let $\wt\sigma_a\in\cC^1(X)\cap\cF_\alpha$ and $\sigma_a\in\cC^1(X)\cap\cF_\alpha$ be the absorption coefficients reconstructed with $\wt\gamma\in\cC^1(\bar X)\cap\cF_\alpha$ and $\gamma\in\cC^1(\bar X)\cap\cF_\alpha$ respectively from datum $H$. Then, for some constant $\fc$, the following bound holds:
\begin{equation}\label{EQ:D Stab Sigma}
	\|\wt\sigma_a-\sigma_a \|_{\cH^1(X)} \le \fc \|\dfrac{H}{\Gamma\sigma_a}\|_{\cH^1(X)} \|\frac{\wt\gamma-\gamma}{\gamma}\|_{\cH^1(X)}.
\end{equation}
\end{theorem}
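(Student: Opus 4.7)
The plan is to mirror the structure of the proof of Theorem~\ref{THM:D Stab Gamma}, interchanging the roles of $\Gamma$ and $\sigma_a$. Let $u$ and $\wt u$ denote the solutions to the diffusion equation~\eqref{EQ:Diff PAT} with coefficient pairs $(\gamma,\sigma_a)$ and $(\wt\gamma,\wt\sigma_a)$ respectively, so that both carry the same Dirichlet data $g$, and set $w=\wt u-u$. The key observation, which is slightly cleaner here than in the $\Gamma$ case, is that since the internal datum $H=\Gamma\sigma_a u=\Gamma\wt\sigma_a\wt u$ is identical for the two pairs and $\Gamma$ is held fixed, the zeroth-order terms cancel when I subtract the two diffusion equations. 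What remains is
\begin{equation*}
	-\nabla\cdot\wt\gamma\nabla w = \nabla\cdot(\wt\gamma-\gamma)\nabla u \quad \mbox{in}\ X, \qquad w=0 \quad \mbox{on}\ \partial X.
\end{equation*}
Standard elliptic regularity then yields $\|w\|_{\cH^1(X)} \le \fc_1 \|\nabla\cdot(\wt\gamma-\gamma)\nabla u\|_{L^2(X)}$.

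Next, I extract $\wt\sigma_a-\sigma_a$ from $w$ via the algebraic identity that follows from $\wt\sigma_a\wt u=\sigma_a u$: writing $\wt u=u+w$ gives $(\wt\sigma_a-\sigma_a)u=-\wt\sigma_a w$, hence $\wt\sigma_a-\sigma_a=-\wt\sigma_a w/u$. Since $\wt\sigma_a\in\cC^1(X)\cap\cF_\alpha$ and $u$ is $\cC^1$ and bounded uniformly away from zero (by the assumption on $g$ and classical elliptic theory), the multiplier $\wt\sigma_a/u$ lies in $W^{1,\infty}(X)$, so multiplication by it is bounded on $\cH^1(X)$. This produces $\|\wt\sigma_a-\sigma_a\|_{\cH^1(X)} \le \fc_2 \|w\|_{\cH^1(X)}$.

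Finally, I can reuse verbatim the computation already carried out in the proof of Theorem~\ref{THM:D Stab Gamma}. Namely, using $\nabla\cdot\gamma\nabla u=\sigma_a u=H/\Gamma$ and $u=H/(\Gamma\sigma_a)$, one rewrites
\begin{equation*}
	\nabla\cdot(\wt\gamma-\gamma)\nabla u=\dfrac{H}{\Gamma}\,\dfrac{\wt\gamma-\gamma}{\gamma}+\gamma\,\nabla\dfrac{H}{\Gamma\sigma_a}\cdot\nabla\dfrac{\wt\gamma-\gamma}{\gamma},
\end{equation*}
and then a Cauchy--Schwarz step gives $\|\nabla\cdot(\wt\gamma-\gamma)\nabla u\|_{L^2(X)}\le \fc_3 \|H/(\Gamma\sigma_a)\|_{\cH^1(X)}\|(\wt\gamma-\gamma)/\gamma\|_{\cH^1(X)}$. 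Chaining the three inequalities yields the desired bound~\eqref{EQ:D Stab Sigma}.

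I do not anticipate any serious obstacle: the only point that requires a moment's thought is verifying that the zeroth-order term genuinely disappears in the difference equation for $w$ (so that no ``$\sigma_a w$'' type term, which would have forced an additional absorption of $w$ into the right-hand side, appears), and that the multiplicative factor $\wt\sigma_a/u$ can be pulled out in $\cH^1$ norm. Both are routine given the assumed bounds.
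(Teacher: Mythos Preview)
Your proposal is correct and follows essentially the same route as the paper's proof: you derive the same equation for $w=\wt u-u$ (with the zeroth-order term absent because $\Gamma\wt\sigma_a\wt u=\Gamma\sigma_a u$), apply the same elliptic $\cH^1$ estimate, use the same algebraic identity $(\wt\sigma_a-\sigma_a)u=-\wt\sigma_a w$ to pass from $w$ to $\wt\sigma_a-\sigma_a$, and invoke the same rewriting of $\nabla\cdot(\wt\gamma-\gamma)\nabla u$ already obtained in the proof of Theorem~\ref{THM:D Stab Gamma}. The only difference is the order in which you present the last two steps, which is immaterial.
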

\begin{proof}
Let $\wt u$ and $u$ be solutions to the diffusion equation~\eqref{EQ:Diff PAT} with $(\wt \gamma, \wt\sigma_a)$ and $(\gamma, \sigma_a)$ respectively. Define $w=\wt u-u$. Then $w$ solves
\begin{equation}
\begin{array}{rcll}
	-\nabla\cdot\wt\gamma \nabla w &=& \nabla\cdot (\wt \gamma-\gamma) \nabla u,& \mbox{in}\ \ X\\
	w &=& 0,& \mbox{on}\ \ \partial X
\end{array}
\end{equation}
where we have used the fact that $H=\Gamma \wt \sigma_a\wt u=\Gamma \sigma_a u$. This again gives us the same bound as in~\eqref{EQ:D Stab Gamma-1}, that is,
\begin{equation}\label{EQ:D Stab Sigma-1}
	\|w\|_{\cH^1(X)}\le \fc_1 \|\nabla\cdot (\gamma-\gamma) \nabla u\|_{L^2(X)},
\end{equation}
by standard elliptic theory~\cite{Evans-Book10,GiTr-Book00}. Using~\eqref{EQ:D Stab Gamma-4}, we have
\begin{equation}\label{EQ:D Stab Sigma-2}
	\|w\|_{\cH^1(X)}\le \fc_2 \|\dfrac{H}{\Gamma\sigma_a}\|_{\cH^1(X)} \|\frac{\wt\gamma-\gamma}{\gamma}\|_{\cH^1(X)}.
\end{equation}

From the datum $H=\Gamma \wt \sigma_a\wt u=\Gamma \sigma_a u$, we check that
\begin{equation}
	\Gamma\wt\sigma_a\wt u-\Gamma\sigma_a u 
	= \Gamma\wt\sigma_a w+\Gamma(\wt\sigma_a-\sigma_a) u =0,
\end{equation}
which in turn gives us,
\begin{equation}\label{EQ:D Stab Sigma-3}
	\|\wt\sigma_a-\sigma_a\|_{\cH^1(X)} \le \fc_3 \|w\|_{\cH^1(X)}.
\end{equation}
The stability in~\eqref{EQ:D Stab Sigma} then follows from~\eqref{EQ:D Stab Sigma-2} and~\eqref{EQ:D Stab Sigma-3}.
\end{proof}
Let us emphasize here that the difference between the right hand side of~\eqref{EQ:D Stab Gamma} and that of~\eqref{EQ:D Stab Sigma} is that the $\sigma_a$ is known in~\eqref{EQ:D Stab Gamma} while $\Gamma$ is known in~\eqref{EQ:D Stab Sigma}.

\paragraph{The case of reconstructing $(\Gamma, \sigma_a)$.} In the case of simultaneous reconstruction of $\Gamma$ and $\sigma_a$, we can prove the following stability result following similar arguments as in Theorem~\ref{THM:C Stab Mu-Q}.
\begin{theorem}\label{THM:D Stab GS}
Let $(g_1, g_2)$ be a set of boundary illuminations such that the data $\bH=(H_1, H_2)$ generated from it uniquely determine $(\Gamma, \sigma_a)$ as in Theorem~\ref{THM:C Stab Mu-Q}. Let $(\wt\Gamma, \wt\sigma_a)$ and $(\Gamma, \sigma_a)$ be the coefficient pairs reconstructed with $\wt\gamma\in\cC^2(\bar X)\cap\cF_\alpha$ and $\gamma\in\cC^2(\bar X)\cap\cF_\alpha$ respectively from data set $\bH=(H_1, H_2)$. Then we have that, for some constants $\fc$ and $\wt \fc$,
\begin{multline}\label{EQ:D Stab GS}
\fc \|\sqrt{\wt \gamma}-\sqrt{\gamma}\|_{L^2(X)} \le \|\wt \sigma_a-\sigma_a\|_{L^2(X)}+\|\wt\Gamma\wt\sigma_a-\Gamma\sigma_a\|_{L^2(X)} \\ 
\le \wt\fc \Big(\|\wt\gamma-\gamma\|_{L^2(X)}+\|\frac{\Delta\sqrt{\wt \gamma}}{\sqrt{\wt \gamma}}-\frac{\Delta\sqrt{\gamma}}{\sqrt{\gamma}}\|_{L^2(X)}\Big).
\end{multline}
\end{theorem}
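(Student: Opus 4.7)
\noindent\textbf{Proof plan for Theorem~\ref{THM:D Stab GS}.} The strategy is to mimic the blueprint of Theorem~\ref{THM:C Stab Mu-Q}, working in the reduced variables $\mu = \sqrt{\gamma}/(\Gamma\sigma_a)$ and $q = -(\Delta\sqrt{\gamma}/\sqrt{\gamma} + \sigma_a/\gamma)$, but exploiting a crucial simplification that is not available in the ultrasound-speed case: since the internal datum $\bH$ is held fixed in both reconstructions, the vector field $\bbeta = H_1^2\nabla(H_2/H_1)$ appearing in the transport equation $-\nabla\cdot(\mu^2\bbeta)=0$ is \emph{identical} for $\mu$ and $\wt\mu$; the two functions differ only through the Dirichlet data $\mu^2|_{\partial X} = \gamma g_1^2/H_{1|\partial X}^2$ versus $\wt\mu^2|_{\partial X} = \wt\gamma g_1^2/H_{1|\partial X}^2$. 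Similarly, since $H_j=v_j/\mu$, one has $v_j = \mu H_j$ and $\wt v_j = \wt\mu H_j$, with $v_j$, $\wt v_j$ solving Schr\"odinger-type equations with potentials $q$ and $\wt q$ respectively. The hypothesis on $(g_1,g_2)$ (as invoked in Theorem~\ref{THM:C Stab Mu-Q}) guarantees the transport equation is well-posed, so the assumption that $(\Gamma,\sigma_a)$ is uniquely determined is consistent.

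For the \emph{upper bound}, I would start from the algebraic identities
\begin{equation*}
\sigma_a = -\gamma q - \gamma\,\frac{\Delta\sqrt{\gamma}}{\sqrt{\gamma}}, \qquad \Gamma\sigma_a = \frac{\sqrt{\gamma}}{\mu},
\end{equation*}
and write
\begin{equation*}
\wt\sigma_a - \sigma_a = -\wt\gamma(\wt q - q) - q(\wt\gamma-\gamma) - \wt\gamma\Big(\tfrac{\Delta\sqrt{\wt\gamma}}{\sqrt{\wt\gamma}} - \tfrac{\Delta\sqrt{\gamma}}{\sqrt{\gamma}}\Big) - (\wt\gamma-\gamma)\tfrac{\Delta\sqrt{\gamma}}{\sqrt{\gamma}},
\end{equation*}
together with an analogous decomposition of $\wt\Gamma\wt\sigma_a - \Gamma\sigma_a = \sqrt{\wt\gamma}/\wt\mu - \sqrt{\gamma}/\mu$ that produces the two terms $(\sqrt{\wt\gamma}-\sqrt{\gamma})/\wt\mu$ and $\sqrt{\gamma}(\mu-\wt\mu)/(\mu\wt\mu)$. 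To close these, I would apply elliptic regularity to $w_j = \wt v_j - v_j = (\wt\mu-\mu)H_j$, which satisfies $\Delta w_j + \wt q\,w_j = -(\wt q - q)v_j$, to bound $\|\wt q - q\|_{L^2(X)}$ in terms of $\|\wt\mu-\mu\|_{\cH^2(X)}$; the transport equation for $\wt\mu^2 - \mu^2$ (driven by $(\wt\gamma-\gamma)g_1^2/H_{1|\partial X}^2$ on the inflow) then yields the $\cH^2$ bound on $\wt\mu-\mu$ in terms of boundary data of $\wt\gamma-\gamma$, which under the $\cC^2$ regularity hypothesis is in turn absorbed into $\|\wt\gamma-\gamma\|_{L^2(X)} + \|\Delta\sqrt{\wt\gamma}/\sqrt{\wt\gamma}-\Delta\sqrt{\gamma}/\sqrt{\gamma}\|_{L^2(X)}$ via trace inequalities. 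The positivity and boundedness of $\mu$, $\wt\mu$, $\gamma$, $\wt\gamma$, and $v_j$ (following from the positivity of $u$ ensured by assumption ({\bf Ass-ii})) make all divisions and constants legitimate.

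For the \emph{lower bound}, the identity $\sqrt{\gamma} = \Gamma\sigma_a\mu$ and its tilded version yield
\begin{equation*}
\sqrt{\wt\gamma} - \sqrt{\gamma} = (\wt\Gamma\wt\sigma_a - \Gamma\sigma_a)\wt\mu + \Gamma\sigma_a(\wt\mu - \mu),
\end{equation*}
so the task reduces to absorbing $\|\wt\mu - \mu\|_{L^2}$ into the two reconstruction-error terms on the right-hand side of the desired inequality. I would accomplish this by re-expressing $\wt\mu - \mu$ through the algebraic relation $\Gamma\sigma_a\mu = \sqrt{\gamma}$ combined with the second-order identity linking $\sigma_a$, $\gamma$, and $q$, and by using the Schr\"odinger/elliptic stability step from the upper-bound argument in the reverse direction, so that $\wt\mu-\mu$ becomes controlled by $\wt\sigma_a - \sigma_a$ and $\wt\Gamma\wt\sigma_a - \Gamma\sigma_a$ up to constants depending on the $\cC^2$ norms of $\gamma,\wt\gamma$.

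\emph{Main obstacle.} The delicate point is the norm bookkeeping: transport stability for $\wt\mu^2-\mu^2$ naturally produces estimates on the boundary in terms of $\|\wt\gamma-\gamma\|_{L^2(\partial X)}$, whereas the statement is phrased in $L^2(X)$. Bridging this gap, and similarly extracting $\cH^2$ estimates on $\wt\mu-\mu$ needed to control $\|\wt q - q\|_{L^2}$, is where the $\cC^2(\bar X)\cap\cF_\alpha$ regularity of $\gamma$ and $\wt\gamma$ is consumed to absorb the extra derivatives (and boundary data) into the constant $\wt\fc$; the analogous issue in the lower-bound direction is to ensure that the $\mu$-difference contribution can be fully reabsorbed without leaving a residual term outside the two quantities $\|\wt\sigma_a-\sigma_a\|_{L^2}$ and $\|\wt\Gamma\wt\sigma_a-\Gamma\sigma_a\|_{L^2}$.
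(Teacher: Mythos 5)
Your plan circles the right objects ($\mu$, $q$, the transport equation for $\mu^2$, the Schr\"odinger equations for $v_j$), but it misses the one structural observation on which the paper's entire proof rests, and the machinery you propose in its place does not close. Because the internal data $\bH=(H_1,H_2)$ are \emph{identical} in the two reconstructions, the vector field $\bbeta=H_1^2\nabla(H_2/H_1)$ is the same, and --- with the boundary values of $\gamma$ matched, as in the hypothesis $\wt\gamma_{|\partial X}=\gamma_{|\partial X}$ of Theorem~\ref{THM:C Stab Mu-Q} which the paper implicitly carries over here --- so is the Dirichlet datum $\mu^2_{|\partial X}=\gamma g_1^2/H_{1|\partial X}^2$. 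Hence the transport equation returns $\wt\mu=\mu$ \emph{exactly}, i.e.\ $\wt\Gamma\wt\sigma_a/\sqrt{\wt\gamma}=\Gamma\sigma_a/\sqrt{\gamma}$, and the two-sided comparison between $\|\sqrt{\wt\gamma}-\sqrt{\gamma}\|_{L^2(X)}$ and $\|\wt\Gamma\wt\sigma_a-\Gamma\sigma_a\|_{L^2(X)}$ (in particular the whole lower bound of \eqref{EQ:D Stab GS}) is pure algebra. Likewise, your own formula $w_j=(\wt\mu-\mu)H_j$ then gives $w_j\equiv 0$, and the equation $\Delta w_j+\wt q\,w_j=-(\wt q-q)v_j$ with $v_j$ bounded away from zero forces $\wt q=q$ \emph{exactly}; the upper bound follows by writing $\wt\sigma_a-\sigma_a=\wt\gamma\big(\tfrac{\wt\sigma_a}{\wt\gamma}-\tfrac{\sigma_a}{\gamma}\big)+\tfrac{\sigma_a}{\gamma}(\wt\gamma-\gamma)$ and substituting $\tfrac{\wt\sigma_a}{\wt\gamma}-\tfrac{\sigma_a}{\gamma}=-\big(\tfrac{\Delta\sqrt{\wt\gamma}}{\sqrt{\wt\gamma}}-\tfrac{\Delta\sqrt{\gamma}}{\sqrt{\gamma}}\big)$. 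No transport or elliptic stability estimate is needed anywhere.

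The substitute route you sketch fails precisely at the point you flag as the ``main obstacle.'' If you allow $\wt\mu\ne\mu$ and invoke transport stability to control $\wt\mu-\mu$ by the boundary discrepancy of $\wt\gamma-\gamma$, you must then dominate a trace $\|\wt\gamma-\gamma\|_{L^2(\partial X)}$ by the interior quantities $\|\wt\gamma-\gamma\|_{L^2(X)}+\|\Delta\sqrt{\wt\gamma}/\sqrt{\wt\gamma}-\Delta\sqrt{\gamma}/\sqrt{\gamma}\|_{L^2(X)}$. Trace inequalities go the wrong way for this: knowing $\sqrt{\wt\gamma}-\sqrt{\gamma}$ and $\Delta(\sqrt{\wt\gamma}-\sqrt{\gamma})$ are small in $L^2(X)$ gives interior control but not control up to $\partial X$ (a harmonic perturbation oscillating on the boundary has vanishing Laplacian and small interior $L^2$ norm yet large trace), so the boundary term cannot be absorbed and the upper bound does not close. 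The same unabsorbed $\|\wt\mu-\mu\|$ term also undermines your lower-bound step, which as written risks circularity since you propose to reabsorb $\wt\mu-\mu$ into the very quantities being bounded from below. The remedy is not sharper estimates but the exact identities $\wt\mu=\mu$ and $\wt q=q$ that follow from the data being fixed.
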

\begin{proof}
From the proof of Theorem~\ref{THM:C Stab Mu-Q}, we conclude that $\mu$ is reconstructed independent of the uncertain and objective coefficients. Therefore, we have
\begin{equation}\label{EQ:D Stab GS-1}
	\dfrac{\wt\Gamma\wt\sigma_{a}}{\sqrt{\wt \gamma}}-\dfrac{\Gamma \sigma_{a}}{\sqrt{\gamma}}=0 .
\end{equation} 
This gives immediately the bound,
\begin{equation}\label{EQ:D Stab GS-2}
	\fc_1 \|\sqrt{\wt \gamma}-\sqrt{\gamma}\|_{L^2(X)} \le \|\wt\Gamma\wt\sigma_{a}-\Gamma \sigma_{a}\|_{L^2(X)}\le \wt\fc_1 \|\sqrt{\wt\gamma}-\sqrt{\gamma}\|_{L^2(X)}.
\end{equation} 

Let $v_j=\sqrt{\gamma} u_j$ $(j=1,2)$ and $w_j=\wt v_1-v_1$. Then $w_j$ solves,
\begin{equation}\label{EQ:D Stab GS-3}
\begin{array}{rcll}
	\Delta w_j(\bx)+ \wt q(\bx) w_j(\bx) &=& -(\wt q-q) v_j,& \mbox{in}\ \ X\\
	w_j &=& 0,& \mbox{on}\ \ \partial X
\end{array}
\end{equation}
Meanwhile, $H_j=\Gamma\sigma_a u_j=\mu v_j =\wt\Gamma\wt\sigma_a \wt u_j= \mu \wt v_j$. This implies that $w_j=0$. Equation~\eqref{EQ:D Stab GS-3} then leads to $\wt q=q$, that is,
\begin{equation}\label{EQ:D Stab GS-4}
\dfrac{\Delta\sqrt{\wt \gamma}}{\sqrt{\wt \gamma}}+\dfrac{\wt \sigma_a}{\wt \gamma}=\dfrac{\Delta\sqrt{\gamma}}{\sqrt{\gamma}}+\dfrac{\sigma_a}{\gamma}.
\end{equation}
This translates directly to the following bound:
\begin{equation}\label{EQ:D Stab GS-5}
	\|\wt \sigma_a-\sigma_a\|_{L^2(X)} \le \fc_2 \Big(\|\wt\gamma-\gamma\|_{L^2(X)}+\|\frac{\Delta\sqrt{\wt \gamma}}{\sqrt{\wt \gamma}}-\frac{\Delta\sqrt{\gamma}}{\sqrt{\gamma}}\|_{L^2(X)}\Big).
\end{equation}
The stability estimate in~\eqref{EQ:D Stab GS} the follows from ~\eqref{EQ:D Stab GS-2} and~\eqref{EQ:D Stab GS-4}. 
\end{proof}

It is important to note that the proof of Theorem~\ref{THM:D Stab GS} is mainly based on the relations~\eqref{EQ:D Stab GS-1} and~\eqref{EQ:D Stab GS-4}. Therefore, we can use the same procedure to study impact of uncertainty in one of the coefficients on the reconstruction of the other coefficients. For instance, it is straightforward to derive the following results on the impact of the uncertainty of $\Gamma$ on reconstructing $(\gamma, \sigma_a)$, and the impact of the uncertainty in $\sigma_a$ on the reconstruction of $(\Gamma, \gamma)$.
\begin{corollary}
Under the same assumptions in Theorem~\ref{THM:D Stab GS}, let $(\wt\gamma, \wt\sigma_a)$ and $(\gamma, \sigma_a)$ be the coefficient pairs reconstructed with $\wt\Gamma\in\cC^2(\bar X)\cap\cF_\alpha$ and $\Gamma\in\cC^2(\bar X)\cap\cF_\alpha$ respectively. Then we have that, for some constants $\fc_1$ and $\wt \fc_1$,
\begin{multline}
	\fc_1 \|\wt \Gamma-\Gamma\|_{L^2(X)}\le \|\frac{\wt\sigma_a}{\sqrt{\wt\gamma}}-\frac{\sigma_a}{\sqrt{\gamma}}\|_{L^2(X)} + \|(\dfrac{\Delta\sqrt{\wt\gamma}}{\sqrt{\wt \gamma}}-\dfrac{\Delta\sqrt{\gamma}}{\sqrt{\gamma}})+\dfrac{\sigma_a}{\gamma}(\sqrt{\wt\gamma}-\sqrt{\gamma})\|_{L^2(X)} \\ \le \wt\fc_1 \|\wt \Gamma-\Gamma\|_{L^2(X)}.
\end{multline}
Let $(\wt\Gamma, \wt\gamma)$ and $(\Gamma, \gamma)$ be the coefficient pairs reconstructed with $\wt\sigma_a \in\cC^2(\bar X)\cap\cF_\alpha$ and $\sigma_a \in\cC^2(\bar X)\cap\cF_\alpha$ respectively. Then there exist constants $\fc_2$ and $\wt \fc_2$ such that
\begin{multline}
	\fc_2 \|\wt \sigma_a-\sigma_a\|_{L^2(X)} \le \|\dfrac{\wt\Gamma}{\sqrt{\wt\gamma}}-\dfrac{\Gamma}{\sqrt{\gamma}}\|_{L^2(X)} + \|(\dfrac{\Delta\sqrt{\wt\gamma}}{\sqrt{\wt \gamma}}-\dfrac{\Delta\sqrt{\gamma}}{\sqrt{\gamma}})+\sigma_a (\dfrac{1}{\wt\gamma}-\dfrac{1}{\gamma})\|_{L^2(X)} \\ \le \fc_2 \|\wt\sigma_a-\sigma_a\|_{L^2(X)}.
\end{multline}
\end{corollary}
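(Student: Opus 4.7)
The plan is to imitate the structure of the proof of Theorem~\ref{THM:D Stab GS}. The crucial observation is that the two identities~\eqref{EQ:D Stab GS-1} and~\eqref{EQ:D Stab GS-4} derived there merely express that the transported quantity $\mu^2$ and the Schr\"odinger potential $q$ appearing in the equation for $v_j=\sqrt{\gamma}u_j$ are both determined by the shared boundary sources $(g_1,g_2)$ and internal data $\bH=(H_1,H_2)$; nothing in their derivation depended on which of $(\Gamma,\sigma_a,\gamma)$ is treated as the uncertain coefficient. Thus in Part~1 (with $\Gamma$ uncertain and $(\sigma_a,\gamma)$ reconstructed) one still has
\begin{equation}\label{EQ:Cor-GS}
\wt\Gamma\,\dfrac{\wt\sigma_a}{\sqrt{\wt\gamma}}=\Gamma\,\dfrac{\sigma_a}{\sqrt{\gamma}},\qquad
\dfrac{\Delta\sqrt{\wt\gamma}}{\sqrt{\wt\gamma}}+\dfrac{\wt\sigma_a}{\wt\gamma}=\dfrac{\Delta\sqrt{\gamma}}{\sqrt{\gamma}}+\dfrac{\sigma_a}{\gamma},
\end{equation}
and the analogous pair holds for Part~2.

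For the left inequality of Part~1, I would rearrange the first identity in~\eqref{EQ:Cor-GS} as $(\wt\Gamma-\Gamma)(\wt\sigma_a/\sqrt{\wt\gamma})=-\Gamma(\wt\sigma_a/\sqrt{\wt\gamma}-\sigma_a/\sqrt{\gamma})$. Since $\Gamma,\wt\sigma_a,\wt\gamma\in\cF_\alpha$, the prefactor $\Gamma\sqrt{\wt\gamma}/\wt\sigma_a$ is pointwise bounded above and below by positive constants, yielding the pointwise two-sided comparison $|\wt\Gamma-\Gamma|\asymp|\wt\sigma_a/\sqrt{\wt\gamma}-\sigma_a/\sqrt{\gamma}|$. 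Taking $L^2$ norms and dropping the non-negative second term gives the left inequality, while the upper side of this comparison bounds the first norm on the right of the corollary by $\|\wt\Gamma-\Gamma\|_{L^2(X)}$.

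To close the right inequality, the only remaining task is to bound the second norm on the right by $\|\wt\Gamma-\Gamma\|_{L^2(X)}$. Substituting $\wt\sigma_a=\Gamma\sigma_a\sqrt{\wt\gamma}/(\wt\Gamma\sqrt{\gamma})$ into the second identity in~\eqref{EQ:Cor-GS} and multiplying through by $\sqrt{\wt\gamma}$ shows that $w=\sqrt{\wt\gamma}-\sqrt{\gamma}$ satisfies
\begin{equation}\label{EQ:Cor-w}
(\Delta+q)\,w=\dfrac{\sigma_a(\wt\Gamma-\Gamma)}{\wt\Gamma\sqrt{\gamma}}\ \text{in}\ X,\qquad w=0\ \text{on}\ \partial X,
\end{equation}
with $q=-(\Delta\sqrt{\gamma}/\sqrt{\gamma}+\sigma_a/\gamma)$, the homogeneous boundary value coming from the matching assumption $\wt\gamma|_{\partial X}=\gamma|_{\partial X}$ already invoked in Theorem~\ref{THM:D Stab GS}. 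Because $0$ is not an eigenvalue of $\Delta+q$ (the same nonresonance argument as in the proof of Theorem~\ref{THM:C Stab Mu-Q}), elliptic regularity yields $\|\sqrt{\wt\gamma}-\sqrt{\gamma}\|_{L^2(X)}\le\fc\,\|\wt\Gamma-\Gamma\|_{L^2(X)}$. An algebraic rearrangement of the second identity in~\eqref{EQ:Cor-GS}, together with the decomposition $\wt\Gamma\sqrt{\wt\gamma}-\Gamma\sqrt{\gamma}=(\wt\Gamma-\Gamma)\sqrt{\wt\gamma}+\Gamma(\sqrt{\wt\gamma}-\sqrt{\gamma})$, then expresses the argument of the second norm as an $\cF_\alpha$-bounded linear combination of $\wt\Gamma-\Gamma$ and $\sqrt{\wt\gamma}-\sqrt{\gamma}$, and the right inequality follows.

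Part~2 is obtained by exactly the same procedure with $\Gamma$ and $\sigma_a$ interchanged: the first identity now gives $(\wt\sigma_a-\sigma_a)(\wt\Gamma/\sqrt{\wt\gamma})=-\sigma_a(\wt\Gamma/\sqrt{\wt\gamma}-\Gamma/\sqrt{\gamma})$, producing the left inequality, while the second identity yields an elliptic equation of the same form as~\eqref{EQ:Cor-w} for $\sqrt{\wt\gamma}-\sqrt{\gamma}$ but with source proportional to $\wt\sigma_a-\sigma_a$; the analogous algebraic rearrangement then recovers the second norm shown. The principal technical obstacle in both parts is the elliptic bound on $\sqrt{\wt\gamma}-\sqrt{\gamma}$ via~\eqref{EQ:Cor-w}, which rests on the boundary matching and the nonresonance hypothesis on $\Delta+q$ already carried over from Theorem~\ref{THM:D Stab GS}; all other steps reduce to elementary pointwise estimates under the $\cF_\alpha$ bounds.
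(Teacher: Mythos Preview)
Your approach is correct and aligned with the paper: it states explicitly (just before the corollary) that the result follows by the ``same procedure'' as Theorem~\ref{THM:D Stab GS}, i.e., from the two invariants~\eqref{EQ:D Stab GS-1} and~\eqref{EQ:D Stab GS-4}, and gives no further details. Your derivation of the elliptic equation $(\Delta+q)(\sqrt{\wt\gamma}-\sqrt{\gamma})=\sigma_a(\wt\Gamma-\Gamma)/(\wt\Gamma\sqrt{\gamma})$ in Part~1 and the ensuing $L^2$ bound is the natural way to close the upper inequality there.

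One remark on Part~2. Your elliptic route is unnecessary, and the claim that $w=\sqrt{\wt\gamma}-\sqrt{\gamma}$ satisfies $(\Delta+q)w=(\text{source proportional to }\wt\sigma_a-\sigma_a)$ is not quite right as stated: decomposing $\wt\sigma_a/\sqrt{\wt\gamma}-\sigma_a/\sqrt{\gamma}$ in terms of $\wt\sigma_a-\sigma_a$ leaves an extra zeroth-order term $\sigma_a w/(\sqrt{\wt\gamma}\sqrt{\gamma})$, which shifts the potential away from $q$ and would require rechecking the nonresonance. Fortunately Part~2 is purely algebraic: substituting~\eqref{EQ:D Stab GS-4} into the argument of the second norm gives
\[
\Big(\dfrac{\Delta\sqrt{\wt\gamma}}{\sqrt{\wt\gamma}}-\dfrac{\Delta\sqrt{\gamma}}{\sqrt{\gamma}}\Big)+\sigma_a\Big(\dfrac{1}{\wt\gamma}-\dfrac{1}{\gamma}\Big)=\Big(\dfrac{\sigma_a}{\gamma}-\dfrac{\wt\sigma_a}{\wt\gamma}\Big)+\Big(\dfrac{\sigma_a}{\wt\gamma}-\dfrac{\sigma_a}{\gamma}\Big)=\dfrac{\sigma_a-\wt\sigma_a}{\wt\gamma},
\]
so that norm is immediately two-sidedly comparable to $\|\wt\sigma_a-\sigma_a\|_{L^2(X)}$ under the $\cF_\alpha$ bounds, with no PDE argument needed. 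For Part~1 no such cancellation occurs, and your elliptic estimate is indeed what one must supply.
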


\section{Impact of model inaccuracies in fluorescence PAT}
\label{SEC:fPAT}

We now extend the sensitivity analysis in the previous section to image reconstruction problems in quantitative photoacoustics for molecular imaging. In this setup, we are interested in imaging contrast agents inside the medium of interests. For instance, in fluorescence PAT (fPAT)~\cite{BuGrSo-NP09,RaDiViMaPeKoNt-NP09,RaNt-MP07,RaViNt-OL07,ReZh-SIAM13}, fluorescent biochemical markers are injected into the medium to be probed. The markers will then accumulate on certain targeted heterogeneities, for instance cancerous tissues, and emit near-infrared light (at wavelength $\lambda_m$) upon excitation by an external light source (at a different wavelength which we denote by $\lambda_x$). In the propagation process, both the excitation photons and the fluorescence photons can be absorbed by the medium. This absorption process then generates ultrasound signals following the photoacoustic effect we described previously.

The densities of the excitation photons and emission photons, denoted by $u_x(\bx)$ and $u_m(\bx)$ respectively, solve the following system of coupled diffusion equations~\cite{AmGaGi-QAM14,CoChDuRoScArScYo-OE07,ReZh-SIAM13,SoTaMcElNeFrAr-AO07}:
\begin{equation}\label{EQ:Diff fPAT}
	\begin{array}{rcll}
  	-\nabla\cdot \gamma_x(\bx)\nabla u_x(\bx) + (\sigma_{a,xi}+\sigma_{a,xf}) u_x(\bx) &=& 0,
  	&\mbox{in}\ \ X\\
  	-\nabla\cdot \gamma_m(\bx)\nabla u_m(\bx) + \sigma_{a,m}(\bx) u_m(\bx) &=& \eta \sigma_{a,xf} u_x(\bx),
  	&\mbox{in}\ \ X\\
    u_x(\bx) = g_x(\bx), \qquad   u_m(\bx) &=& 0, & \mbox{on}\ \ \partial X
	\end{array}	
\end{equation}
where the subscripts $x$ and $m$ are used to label the quantities at the excitation and emission wavelengths, respectively. The external excitation source is modeled by $g_x(\bx)$. The total absorption coefficient at the excitation wavelength consists of two parts, the intrinsic part $\sigma_{a,xi}$ that is due to the medium itself, and the fluorescence part $\sigma_{a,xf}$ that is due to the injected fluorophores of the biochemical markers. The fluorescence absorption coefficient $\sigma_{x,f}(\bx)$ is proportional to the concentration $\rho(\bx)$ and the extinction coefficient $\eps(\bx)$ of the fluorophores, i.e. $\sigma_{x,f}=\eps(\bx)\rho(\bx)$. The coefficient $\eta(\bx)$ is called the fluorescence quantum efficiency of the medium. The product of the quantum efficiency and the fluorophores absorption coefficient, $\eta \sigma_{x,f}$, is called the quantum yield.

The initial pressure field generated by the photoacoustic effect in this case is given as~\cite{ReZhZh-IP15,ReZh-SIAM13}:
\begin{equation}\label{EQ:Data QfPAT}
	H(\bx)=\Gamma(\bx) \Big((\sigma_{a,xi}+(1-\eta)\sigma_{a,xf}) u_x(\bx)+\sigma_{a,m} u_m(\bx)\Big).
\end{equation}
This consists of a part from the excitation wavelength and a part from the emission wavelength and the two parts can not be separated. Note that the component $\eta\sigma_{a,xf}u_x$ is subtracted from the excitation part in~\eqref{EQ:Data QfPAT} since this component is the part of the energy used to generate the emission light, as appeared in the second equation of~\eqref{EQ:Diff fPAT}.

The initial pressure field generated from the fluorescence photoacoustic effect evolves according to the same acoustic wave equation~\eqref{EQ:Acous}. The objective of fPAT is to determine the fluorescence absorption coefficient $\sigma_{a,xf}(\bx)$ (and therefore the spatial concentration of the fluorophores inside the medium, i.e. $\rho(\bx)$), and the quantum efficiency $\eta(\bx)$, whenever possible, from measured ultrasound signals on the surface of the medium. It is generally assumed that the coefficient pairs $(\gamma_x, \sigma_{a,xi})$ and $(\gamma_m, \sigma_{a,m})$ are known already, for instance from a PAT process at excitation wavelength and another PAT process at emission wavelength. We refer interested reader to~\cite{BuGrSo-NP09,RaDiViMaPeKoNt-NP09,RaNt-MP07,RaViNt-OL07,ReZh-SIAM13} for more detailed discussions on fPAT.

The objective of this section is to translate the uncertainty characterization we developed in the previous section to the case of fPAT. The main ideas of the derivation remains the same. However, the calculations are slightly more lengthy since we have to deal with system of diffusion equations as in~\eqref{EQ:Diff fPAT} instead of a single diffusion equation as in~\eqref{EQ:Diff PAT}. For more details on the mathematical modeling, as well as uniqueness results on image reconstructions, in fPAT, we refer to~\cite{ReZhZh-IP15,ReZh-SIAM13}. We make the following regularity assumptions on the background coefficients:
\[
	\Gamma\in\cC^3(X)\cap\cF_\alpha, \ \ \ (\gamma_x, \gamma_m)\in[\cC^2(\bar X)\cap \cF_\alpha]^2, \ \ \ (\sigma_{a,xi}, \sigma_{a,m})\in[\cC^3(X)\cap \cF_\alpha]^2 .
\]

\subsection{The ultrasound speed uncertainty}
\label{SUBSEC:C fPAT}

We start with the most important case, the stability of reconstructing the fluorescence absorption coefficient $\sigma_{a,xf}$ with respect to the ultrasound speed uncertainty. As in Section~\ref{SUBSEC:C PAT}, we will first derive stability of the reconstruction with respect to uncertainty in $H$ and then combine the result with the stability in~\eqref{EQ:C Stab H}. We have the following result.
\begin{theorem}\label{THM:C Stab fPAT Sigma}
 Let $\wt\sigma_{a,xf}\in\cC^3(X)\times\cF_\alpha$ and $\sigma_{a,xf}\in\cC^3(X)\times\cF_\alpha$ be the fluorescence coefficient reconstructed with ultrasound speeds $\wt c$ and $c$ respectively from datum $\Lambda_c H$. Assume that $\|\wt H\|_{\cH^3(X)}, \|H\|_{\cH^3(X)}\le \fc_h$, $\|\wt c\|_{\cC^2(X)}, \|c\|_{\cC^2(X)}\le \fc_c$ for some constants $\fc_h$ and $\fc_c$. Then there exists $\eps_c$, $T$ and $\fc$ such that $\|\wt c-c\|_{\cC^1(X)}\le \eps_c$ implies
\begin{equation}\label{EQ:C Stab fPAT Sigma}
\|(\wt\sigma_{a,xf}-\sigma_{a,xf})u_x\|_{L^2(X)}\le\fc \|\wt c-c\|_{L^\infty(X)}\|\Lambda_c H\|_{\cH^1((0,T)\times\partial X)}^{1/2}.
\end{equation}
\end{theorem}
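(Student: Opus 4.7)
The plan is to mirror the two-step strategy of the proof of Theorem~\ref{THM:C Stab Sigma}. The first step is to apply Theorem~\ref{THM:C Stab H} directly, which gives $\|\wt H-H\|_{\cH^1(X)}\le \fc\|\wt c-c\|_{L^\infty(X)}\|\Lambda_c H\|_{\cH^1((0,T]\times\partial X)}^{1/2}$ once the standing regularity hypothesis $\wt H, H\in\cH^3(X)$ has been verified from elliptic regularity applied to the fPAT system~\eqref{EQ:Diff fPAT} under the assumptions on $(\gamma_x,\gamma_m,\sigma_{a,xi},\sigma_{a,m},\Gamma,g_x)$. This reduces the theorem to the purely stationary estimate $\|(\wt\sigma_{a,xf}-\sigma_{a,xf})u_x\|_{L^2(X)}\le\fc'\|\wt H-H\|_{L^2(X)}$, after which the claim follows from $\cH^1(X)\hookrightarrow L^2(X)$.

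For the stationary step, I would introduce $w_x=\wt u_x-u_x$, $w_m=\wt u_m-u_m$, and $\delta=(\wt\sigma_{a,xf}-\sigma_{a,xf})u_x$. Subtracting the two copies of~\eqref{EQ:Diff fPAT} (for $(\wt\sigma_{a,xf},\wt u_x,\wt u_m)$ and $(\sigma_{a,xf},u_x,u_m)$) yields the two elliptic PDEs $-\nabla\cdot\gamma_x\nabla w_x+(\sigma_{a,xi}+\wt\sigma_{a,xf})w_x=-\delta$ and $-\nabla\cdot\gamma_m\nabla w_m+\sigma_{a,m}w_m=\eta(\wt\sigma_{a,xf}w_x+\delta)$, both with homogeneous Dirichlet boundary conditions, together with the pointwise identity $(\wt H-H)/\Gamma=(1-\eta)\delta+(\sigma_{a,xi}+(1-\eta)\wt\sigma_{a,xf})w_x+\sigma_{a,m}w_m$ coming from the definition~\eqref{EQ:Data QfPAT} of $H$ after expanding $\wt\sigma_{a,xf}\wt u_x-\sigma_{a,xf}u_x=\wt\sigma_{a,xf}w_x+\delta$.

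Assuming $\eta$ is uniformly bounded away from $1$ (a standard physical requirement), the pointwise identity lets me solve $\delta$ algebraically in terms of $(w_x,w_m)$ and $(\wt H-H)/\Gamma$, and substituting back into the two PDEs produces a closed coupled elliptic system for $(w_x,w_m)$ alone, with homogeneous Dirichlet data and forcing proportional to $(\wt H-H)/\Gamma$. The principal part is block-diagonal and uniformly elliptic with coefficients $(1-\eta)\gamma_x$ and $(1-\eta)\gamma_m$. Standard energy estimates in the spirit of~\cite{Evans-Book10,GiTr-Book00} then yield $\|w_x\|_{\cH^1(X)}+\|w_m\|_{\cH^1(X)}\le \fc''\|\wt H-H\|_{L^2(X)}$, and feeding this back into the algebraic identity together with the uniform lower bound on $u_x$ gives $\|\delta\|_{L^2(X)}\le\fc'''\|\wt H-H\|_{L^2(X)}$. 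Chaining with the first step closes~\eqref{EQ:C Stab fPAT Sigma}.

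The main obstacle lies in the coercivity of the reduced coupled system: after eliminating $\delta$, the induced zero-order coupling carries terms of mixed sign (for example a contribution proportional to $-\eta\sigma_{a,xi}w_x^2$), so the naive Lax--Milgram estimate is not positive definite. Circumventing this will require balancing a smallness assumption on $\eta$ against the Poincar\'e constant times the ellipticity lower bound $(1-\eta)\underline{\gamma_x}$, or, alternatively, invoking the Fredholm alternative together with the implicit uniqueness hypothesis on the fPAT reconstruction of $\sigma_{a,xf}$; once this coercivity issue is resolved, the remainder of the argument is a routine application of classical elliptic theory.
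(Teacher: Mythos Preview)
Your proposal is correct and mirrors the paper's proof almost step for step: the paper subtracts the two fPAT systems, uses the data identity~\eqref{EQ:C Stab fPAT Sigma-2} to eliminate $(\wt\sigma_{a,xf}-\sigma_{a,xf})u_x$ and obtain the closed coupled system~\eqref{EQ:C Stab fPAT Sigma-4} for $(w_x,w_m)$ with forcing $\wt H-H$, then chains the resulting $L^2$ estimate with Theorem~\ref{THM:C Stab H}. The coercivity concern you flag is handled in the paper simply by invoking the strongly elliptic systems theory of~\cite{McLean-Book00} (G{\aa}rding plus the Fredholm alternative), which is precisely your second suggested resolution; no smallness assumption on $\eta$ is imposed.
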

\begin{proof}
Let $(\wt u_x, \wt u_m)$ and $(u_x,u_m)$ be the solution of the diffusion system~\eqref{EQ:Diff fPAT} with coefficients $\wt\sigma_{a,xf}$ and $\sigma_{a,xf}$ respectively. Define $(w_x, w_m)=(\wt u_x-u_x, \wt u_m-u_m)$. We then check that $(w_x, w_m)$ solves
\begin{equation}\label{EQ:C Stab fPAT Sigma-1}
	\begin{array}{rcll}
  	-\nabla\cdot \gamma_x\nabla w_x(\bx) + (\sigma_{a,xi}+\wt\sigma_{a,xf}) w_x(\bx) &=& -(\wt\sigma_{a,xf}-\sigma_{a,xf})u_x,
  	&\mbox{in}\ \ X\\
  	-\nabla\cdot \gamma_m\nabla w_m(\bx) + \sigma_{a,m}(\bx) w_m(\bx) &=& \eta\wt\sigma_{a,xf} w_x +\eta(\wt\sigma_{a,xf}-\sigma_{a,xf})u_x,
  	&\mbox{in}\ \ X\\
    w_x(\bx) = 0, \qquad  w_m &=& 0, & \mbox{on}\ \ \partial X
	\end{array}	
\end{equation}
From the datum~\eqref{EQ:Data QfPAT}, we deduce that
\begin{equation}\label{EQ:C Stab fPAT Sigma-2}
	\wt H-H=(\sigma_{a,xi}+(1-\eta)\wt\sigma_{a,xf})w_x+(1-\eta)(\wt\sigma_{a,xf}-\sigma_{a,xf})u_x+\sigma_{a,m} w_m
\end{equation}
This gives 
\begin{equation}\label{EQ:C Stab fPAT Sigma-3}
	\|(\wt\sigma_{a,xf}-\sigma_{a,xf})u_x\|_{L^2(X)}\le \fc_1(\|\wt H-H\|_{L^2(X)}+\|w_x\|_{L^2(X)}+\|w_m\|_{L^2(X)}).
\end{equation}

Using the relation~\eqref{EQ:C Stab fPAT Sigma-2}, we can now rewrite the system~\eqref{EQ:C Stab fPAT Sigma-1} as
\begin{equation}\label{EQ:C Stab fPAT Sigma-4}
	\begin{array}{rcll}
  	-\nabla\cdot \gamma_x\nabla w_x - \frac{\eta\sigma_{a,xi}}{1-\eta} w_x(\bx) &=& \frac{\sigma_{a,m}}{1-\eta}w_m-\frac{\wt H-H}{1-\eta},
  	&\mbox{in}\ \ X\\
  	-\nabla\cdot \gamma_m\nabla w_m + \frac{\sigma_{a,m}}{1-\eta} w_m(\bx) &=& -\frac{\eta\sigma_{a,xi}}{1-\eta} w_x +\frac{\eta}{1-\eta}(\wt H-H),
  	&\mbox{in}\ \ X\\
    w_x(\bx) = 0, \qquad  w_m &=& 0, & \mbox{on}\ \ \partial X
	\end{array}	
\end{equation}
This is a strongly elliptic system of equations. With the assumption on the regularity of the coefficients, we have the classical bound~\cite{McLean-Book00}:
\begin{equation}\label{EQ:C Stab fPAT Sigma-5}
\|w_x\|_{L^2(X)}+\|w_m\|_{L^2(X)}\le \fc_2 \|\wt H-H\|_{L^2(X)}.
\end{equation}
The stability bound~\eqref{EQ:C Stab fPAT Sigma} then follows from~\eqref{EQ:C Stab fPAT Sigma-3}, ~\eqref{EQ:C Stab fPAT Sigma-5} and~\eqref{EQ:C Stab H}.
\end{proof}

Let us emphasize that the weight function $u_x$, i.e. the density of the excitation photons, in the sensitivity relation~\eqref{EQ:C Stab fPAT Sigma} is very important and can not be removed. The appearance of $u_x$ in the sensitivity analysis is consistent with the following fact. If $u_x$ vanishes in a region inside the domain, the moleculars in the region would not be excited to emit new light. Therefore, the acoustic data we measured contain no information on the medium in the region. Thus, we can not hope to reconstruct any information inside the region, which is demonstrated here since in that case $(\tilde\sigma_{a,xf}-\sigma_{a,xf})u_x=0$ in the estimate.

\subsection{Uncertainty due to quantum efficiency}

In applications of fPAT, it is often assumed that the quantum efficiency of the medium is known. This is true for some well-understood medium, but not in general. In fact, in many cases of classical fluoresence optical tomography (FOT), researchers are interested in reconstructing the quantum efficiency as well. However, it is not possible to reconstruct both coefficients simultaneously because of the non-uniqueness in the FOT inverse problem. We now assume that the quantum efficiency $\eta$ is the uncertainty coefficient and attempt to the sensitivity of reconstructing $\sigma_{a,xf}$ with respect to changes in $\eta$. In this case, we assume that the ultrasound speed $c$ is known exactly so that we have access to an accurate $H$ directly.

\begin{theorem}\label{THM:Eta Stab fPAT Sigma}
Let $\wt\sigma_{a,xf}\in\cC^1(X)\cap\cF_\alpha$ and $\sigma_{a,xf}\in \cC^1(X)\cap\cF_\alpha$ be reconstructed from datum $H$ with coefficients $\wt\eta$ and $\eta$ respectively. Then the following holds for some constant $\fc$:
\begin{equation}\label{EQ:Eta Stab Sigmaf}
\|(\wt\sigma_{a,xf}-\sigma_{a,xf})u_x\|_{L^2(X)}\le \fc \|(\wt\eta-\eta)u_x\|_{L^2(X)}.
\end{equation}
\end{theorem}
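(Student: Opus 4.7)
The plan is to mirror the strategy used in the proof of Theorem~\ref{THM:C Stab fPAT Sigma}, replacing the $\wt H - H$ term (which vanishes here since both reconstructions produce the same datum $H$) by the algebraic term involving $\wt\eta - \eta$ coming from expanding the data equation~\eqref{EQ:Data QfPAT}.

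First, I would let $(\wt u_x,\wt u_m)$ and $(u_x,u_m)$ denote the solutions of the coupled diffusion system~\eqref{EQ:Diff fPAT} corresponding to $(\wt\eta,\wt\sigma_{a,xf})$ and $(\eta,\sigma_{a,xf})$ respectively, and set $w_x=\wt u_x-u_x$, $w_m=\wt u_m-u_m$. Subtracting the two systems gives equations for $(w_x,w_m)$ whose right-hand sides contain the source $(\wt\sigma_{a,xf}-\sigma_{a,xf})u_x$ together with terms proportional to $(\wt\eta-\eta)\sigma_{a,xf}u_x$. Next, since both parameter choices produce the same $H$, subtracting~\eqref{EQ:Data QfPAT} evaluated at $(\wt\eta,\wt\sigma_{a,xf})$ and $(\eta,\sigma_{a,xf})$ yields the algebraic identity
\begin{equation*}
0 = [\sigma_{a,xi}+(1-\wt\eta)\wt\sigma_{a,xf}]\,w_x + \sigma_{a,m}\,w_m + (1-\wt\eta)(\wt\sigma_{a,xf}-\sigma_{a,xf})u_x + (\eta-\wt\eta)\sigma_{a,xf}u_x,
\end{equation*}
which, assuming $\wt\eta$ is bounded away from $1$ (a standard physical assumption on fluorescence quantum efficiency, in line with $\eta\in\cF_\alpha$), allows me to solve algebraically for $(\wt\sigma_{a,xf}-\sigma_{a,xf})u_x$ in terms of $w_x$, $w_m$, and $(\wt\eta-\eta)u_x$.

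Then I would substitute this expression back into both equations for $(w_x,w_m)$, exactly as in passing from~\eqref{EQ:C Stab fPAT Sigma-1} to~\eqref{EQ:C Stab fPAT Sigma-4}. The resulting decoupled source in each equation becomes proportional to $(\wt\eta-\eta)\sigma_{a,xf}u_x/(1-\wt\eta)$, and the zeroth-order couplings rearrange into a system of the same structural form as~\eqref{EQ:C Stab fPAT Sigma-4}, which is strongly elliptic under the regularity hypotheses. Applying the classical $L^2$ energy estimate for such systems (McLean, as cited in the previous theorem) gives
\begin{equation*}
\|w_x\|_{L^2(X)} + \|w_m\|_{L^2(X)} \le \fc_1\,\|(\wt\eta-\eta)\sigma_{a,xf}u_x\|_{L^2(X)} \le \fc_2\,\|(\wt\eta-\eta)u_x\|_{L^2(X)},
\end{equation*}
where the last inequality uses $\sigma_{a,xf}\in\cF_\alpha$. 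Combining this with the algebraic relation for $(\wt\sigma_{a,xf}-\sigma_{a,xf})u_x$ yields the stated bound~\eqref{EQ:Eta Stab Sigmaf}.

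The main obstacle, as in Theorem~\ref{THM:C Stab fPAT Sigma}, is verifying that after the substitution the reduced system really is strongly elliptic so that the $L^2$ energy estimate applies; the key point is that the off-diagonal zeroth-order coupling constants $-\tfrac{\wt\eta\sigma_{a,xi}}{1-\wt\eta}$ and $-\tfrac{\sigma_{a,m}}{1-\wt\eta}$ appear with opposite signs in the two equations (reflecting the sign structure of~\eqref{EQ:C Stab fPAT Sigma-4}), which is precisely the configuration that was already shown to produce a strongly elliptic system. A subsidiary technical point is justifying the use of $1/(1-\wt\eta)$ as an $L^\infty$ multiplier, which requires $\wt\eta$ to be bounded strictly below $1$; this follows from the assumption $\wt\eta\in\cF_\alpha$ with $\overline{\alpha}<1$ (implicit in the fPAT modeling, since $\eta$ is an efficiency).
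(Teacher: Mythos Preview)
Your proposal is correct and follows essentially the same approach as the paper's proof: derive the equations for $(w_x,w_m)$, extract the algebraic identity from $\wt H=H$, substitute it back to obtain a strongly elliptic system with source proportional to $(\wt\eta-\eta)u_x$, and combine the resulting $L^2$ bound with the algebraic identity. The only cosmetic difference is that the paper splits $(1-\wt\eta)\wt\sigma_{a,xf}-(1-\eta)\sigma_{a,xf}$ as $(1-\eta)(\wt\sigma_{a,xf}-\sigma_{a,xf})+(\eta-\wt\eta)\wt\sigma_{a,xf}$ (hence divides by $1-\eta$ in the reduced system), whereas you split it as $(1-\wt\eta)(\wt\sigma_{a,xf}-\sigma_{a,xf})+(\eta-\wt\eta)\sigma_{a,xf}$ (and divide by $1-\wt\eta$); both are equally valid and lead to the same conclusion.
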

\begin{proof}
Let $(\wt u_x, \wt u_m)$ and $(u_x,u_m)$ be the solution of the diffusion system~\eqref{EQ:Diff fPAT} with coefficients $(\wt\eta, \wt\sigma_{a,xf})$ and $(\eta, \sigma_{a,xf})$ respectively. Then $(w_x, w_m)=(\wt u_x-u_x, \wt u_m-u_m)$ solves
\begin{equation}\label{EQ:Eta Diff QfPAT}
	\begin{array}{rcll}
  	-\nabla\cdot \gamma_x\nabla w_x + (\sigma_{a,xi}+\wt\sigma_{a,xf}) w_x(\bx) &=& -(\wt\sigma_{a,xf}-\sigma_{a,xf})u_x,
  	&\mbox{in}\ \ X\\
  	-\nabla\cdot \gamma_m\nabla w_m + \sigma_{a,m}(\bx) w_m(\bx) &=& \wt\eta\wt\sigma_{a,xf} w_x +(\wt\eta\wt\sigma_{a,xf}-\eta\sigma_{a,xf})u_x,
  	&\mbox{in}\ \ X\\
    w_x(\bx) = 0, \qquad  w_m(\bx) &=& 0, & \mbox{on}\ \ \partial X
	\end{array}	
\end{equation}
From the datum~\eqref{EQ:Data QfPAT}, we deduce that
\begin{equation}\label{EQ:Eta Data Rel2}
	(\sigma_{a,xi}+(1-\wt\eta)\wt\sigma_{a,xf})w_x+(\eta-\wt\eta)\wt\sigma_{a,xf} u_x+(1-\eta)(\wt\sigma_{a,xf}-\sigma_{a,xf})u_x+\sigma_{a,m} w_m =0
\end{equation}
This gives 
\begin{equation}\label{EQ:Eta Stab Sigmaf 1}
	\|(\wt\sigma_{a,xf}-\sigma_{a,xf})u_x\|_{L^2(X)}\le \fc_1(\|(\wt \eta-\eta)u_x\|_{L^2(X)}+\|w_x\|_{L^2(X)}+\|w_m\|_{L^2(X)}).
\end{equation}

Using the relation~\eqref{EQ:Eta Data Rel2}, we can now rewrite~\eqref{EQ:Eta Diff QfPAT} as
\begin{equation}\label{EQ:Eta Diff QfPAT 2}
	\begin{array}{rcll}
  	-\nabla\cdot \gamma_x\nabla w_x-(\frac{\eta\sigma_{a,xi}+(\eta-\wt\eta)\wt\sigma_{a,xf}}{1-\eta}) w_x &=& \frac{\sigma_{a,m}}{1-\eta} w_m+\frac{\eta-\wt\eta}{1-\eta}\wt\sigma_{a,xf} u_x,
  	&\mbox{in}\ \ X\\
  	-\nabla\cdot \gamma_m\nabla w_m + \frac{1-\eta+\wt\eta}{1-\eta}\sigma_{a,m} w_m &=& \frac{(\wt\eta-\eta)\wt\eta\wt\sigma_{a,xf}-\wt\eta\sigma_{a,xi}}{1-\eta} w_x -\wt\eta\frac{(\eta-\wt\eta)\wt\sigma_{a,xf}}{1-\eta} u_x,
  	&\mbox{in}\ \ X\\
    w_x(\bx) = 0, \qquad  w_m(\bx) &=& 0, & \mbox{on}\ \ \partial X
	\end{array}	
\end{equation}
This is again a strongly elliptic system of equation. With the bound and regularity assumptions on the coefficients, we deduce that~\cite{McLean-Book00}:
\begin{equation}\label{EQ:Eta Stab Sigmaf 2}
\|w_x\|_{L^2(X)}+\|w_m\|_{L^2(X)}\le \fc_2 \|(\wt\eta-\eta)u_x\|_{L^2(X)}.
\end{equation}
By combining the stability in~\eqref{EQ:Eta Stab Sigmaf 1}, ~\eqref{EQ:Eta Stab Sigmaf 2}, we arrive at the stability bound in~\eqref{EQ:Eta Stab Sigmaf}.
\end{proof}

\subsection{The impact of partial linearization}

One of the main difficulties in imaging fluorescence is how to eliminate the strong background light. One way in practice is to take the background out by simulating the background distribution with the diffusion model for the propagation of excitation light inside the medium. However, due to the presence of $\sigma_{a,xf}$ in the first diffusion equation in~\eqref{EQ:Diff fPAT}, one can not simply solve that equation for its solution since $\sigma_{a,xf}$ is unknown. In many applications, it is simply assumed that $\sigma_{a,xf}$ is small so that it can be dropped from the equation for the excitation light. This is roughly speaking a partial linearization of the original model. 

We now characterize the impact of this partial linearization on the reconstruction of the fluorescence absorption coefficient.
\begin{theorem}
Let $\sigma_{a,xf}\in\cC^1(X)\cap\cF_\alpha$ and $\wt\sigma_{a,xf}\in\cC^1(X)\cap\cF_\alpha$ be the absorption coefficients reconstructed from the diffusion model~\eqref{EQ:Diff fPAT} and its partially-linearization (i.e. by setting $\sigma_{a,xf}=0$ in the x-component of the diffusion system) from a given data set. Then there exists a constant $\fc$ such that
\begin{equation}\label{EQ:Model Stab Sigmaf}
\|(\wt\sigma_{a,xf}-\sigma_{a,xf})u_x\|_{L^2(X)}\le \fc \|\sigma_{a,xf}u_x\|_{L^2(X)}.
\end{equation}
\end{theorem}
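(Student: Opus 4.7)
The plan is to follow the same template used in Theorems~\ref{THM:C Stab fPAT Sigma} and~\ref{THM:Eta Stab fPAT Sigma}: set up the difference system for $(w_x, w_m) = (\wt u_x - u_x, \wt u_m - u_m)$, use the common datum $H$ to express the coefficient discrepancy $(\wt\sigma_{a,xf}-\sigma_{a,xf})u_x$ in terms of $(w_x, w_m)$, re-express the difference system as a strongly elliptic $2\times 2$ system with a forcing term involving $\sigma_{a,xf} u_x$, and close with an $L^2$ bound.

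First I would write the equations. Since $\wt u_x$ solves the partially-linearized excitation equation (with $\sigma_{a,xf}$ deleted) while $u_x$ solves the full equation, direct subtraction gives
\begin{equation*}
-\nabla\cdot\gamma_x\nabla w_x + \sigma_{a,xi} w_x = \sigma_{a,xf} u_x, \qquad w_{x|\partial X}=0,
\end{equation*}
so the source of the $w_x$ equation is exactly $\sigma_{a,xf} u_x$. Similarly, subtracting the emission equations, $w_m$ solves an equation with forcing $\eta\wt\sigma_{a,xf}w_x + \eta(\wt\sigma_{a,xf}-\sigma_{a,xf})u_x$ and zero boundary data. Next I would use the fact that both $(\wt u_x, \wt u_m, \wt\sigma_{a,xf})$ and $(u_x, u_m, \sigma_{a,xf})$ produce the same $H$ via~\eqref{EQ:Data QfPAT}. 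A short algebraic manipulation (expanding $\wt\sigma_{a,xf}\wt u_x - \sigma_{a,xf} u_x = \wt\sigma_{a,xf}w_x + (\wt\sigma_{a,xf}-\sigma_{a,xf})u_x$) then yields the identity
\begin{equation*}
(1-\eta)(\wt\sigma_{a,xf}-\sigma_{a,xf})u_x = -\bigl[(\sigma_{a,xi} + (1-\eta)\wt\sigma_{a,xf}) w_x + \sigma_{a,m} w_m\bigr],
\end{equation*}
which immediately gives $\|(\wt\sigma_{a,xf}-\sigma_{a,xf})u_x\|_{L^2(X)} \le \fc_1(\|w_x\|_{L^2(X)}+\|w_m\|_{L^2(X)})$.

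Then I would substitute this identity into the $w_m$ equation to eliminate $(\wt\sigma_{a,xf}-\sigma_{a,xf})u_x$ and obtain a coupled, strongly elliptic system for $(w_x, w_m)$ whose forcing involves only $\sigma_{a,xf} u_x$ (indeed, the $w_x$ equation has forcing $\sigma_{a,xf}u_x$ and the reformulated $w_m$ equation has forcing purely in terms of $w_x$). Standard $L^2$ elliptic theory for strongly elliptic systems~\cite{McLean-Book00}, together with the regularity assumptions on the background coefficients $\gamma_x, \gamma_m, \sigma_{a,xi}, \sigma_{a,m}, \eta, \wt\sigma_{a,xf}$ listed at the start of Section~\ref{SEC:fPAT}, then furnishes a bound
\begin{equation*}
\|w_x\|_{L^2(X)} + \|w_m\|_{L^2(X)} \le \fc_2 \|\sigma_{a,xf} u_x\|_{L^2(X)}.
\end{equation*}
Combining with the previous estimate yields~\eqref{EQ:Model Stab Sigmaf}.

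The main technical point to watch is ensuring that after substitution the reformulated system remains uniformly strongly elliptic with bounded coefficients, so that the $L^2$ stability estimate from~\cite{McLean-Book00} can be applied with a constant independent of $\wt\sigma_{a,xf}$; this is where the assumption $\wt\sigma_{a,xf}\in\cF_\alpha$ (and the implicit assumption $\eta$ bounded away from $1$) are used. No regularization of the linearization error is needed because the bound is relative — the right-hand side $\|\sigma_{a,xf}u_x\|_{L^2(X)}$ is itself the natural measure of how much the neglected term contributes, so the estimate simply says the reconstruction error (weighted by $u_x$) is of the same order as the term that was dropped.
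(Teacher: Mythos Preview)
Your proposal is correct and follows essentially the same approach as the paper: the paper likewise derives the difference system with forcing $\sigma_{a,xf}u_x$ in the $w_x$-equation, obtains the identical datum identity $(\sigma_{a,xi}+(1-\eta)\wt\sigma_{a,xf})w_x+(1-\eta)(\wt\sigma_{a,xf}-\sigma_{a,xf})u_x+\sigma_{a,m}w_m=0$, uses it to bound $\|(\wt\sigma_{a,xf}-\sigma_{a,xf})u_x\|_{L^2}$ by $\|w_x\|_{L^2}+\|w_m\|_{L^2}$, substitutes back to obtain a strongly elliptic system with forcing $\sigma_{a,xf}u_x$, and closes with the $L^2$ estimate from~\cite{McLean-Book00}. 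Your remarks on the role of $\wt\sigma_{a,xf}\in\cF_\alpha$ and $\eta$ bounded away from $1$ are also on point.
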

\begin{proof}
Let $(u_x,u_m)$ be the solution to the diffusion system~\eqref{EQ:Diff fPAT} and $(\wt u_x, \wt u_m)$ be the solution to the partially linearized system (with coefficient $\wt\sigma_{a,xf}$). Then $(w_x, w_m)=(\wt u_x-u_x, \wt u_m-u_m)$ solves the following system:
\begin{equation}\label{EQ:Model Diff QfPAT}
	\begin{array}{rcll}
  	-\nabla\cdot \gamma_x(\bx)\nabla w_x(\bx) + \sigma_{a,xi}(\bx) w_x(\bx) &=& \sigma_{a,xf} u_x,
  	&\mbox{in}\ \ X\\
  	-\nabla\cdot \gamma_m(\bx)\nabla w_m(\bx) + \sigma_{a,m}(\bx) w_m(\bx) &=& \eta\wt\sigma_{a,xf} w_x +\eta(\wt\sigma_{a,xf}-\sigma_{a,xf})u_x,
  	&\mbox{in}\ \ X\\
    w_x(\bx) = 0, \qquad  w_m &=& 0, & \mbox{on}\ \ \partial X
	\end{array}	
\end{equation}
From the datum~\eqref{EQ:Data QfPAT}, we find the relation:
\begin{equation}\label{EQ:Model Data Rel}
	(\sigma_{a,xi}+(1-\eta)\wt\sigma_{a,xf})w_x+(1-\eta)(\wt\sigma_{a,xf}-\sigma_{a,xf})u_x+\sigma_{a,m} w_m =0,
\end{equation}
which leads immediately to the following bound for some constant $\fc_1$:
\begin{equation}\label{EQ:Model Stab Sigmaf 1}
	\|(\wt\sigma_{a,xf}-\sigma_{a,xf})u_x)\|_{L^2(X)}\le \fc_1(\|w_x\|_{L^2(X)}+\|w_m\|_{L^2(X)}).
\end{equation}

Meanwhile, using the relation~\eqref{EQ:Model Data Rel}, we can rewrite the system~\eqref{EQ:Model Diff QfPAT} as
\begin{equation}\label{EQ:Model Diff QfPAT 2}
	\begin{array}{rcll}
  	-\nabla\cdot \gamma_x(\bx)\nabla w_x(\bx) + \sigma_{a,xi} w_x(\bx) &=& \sigma_{a,xf}u_x,
  	&\mbox{in}\ \ X\\
  	-\nabla\cdot \gamma_m(\bx)\nabla w_m(\bx) + \frac{\sigma_{a,m}}{1-\eta} w_m(\bx) &=& -\frac{\eta\sigma_{a,xi}}{1-\eta} w_x,
  	&\mbox{in}\ \ X\\
    w_x(\bx) = 0, \qquad  w_m &=& 0, & \mbox{on}\ \ \partial X
	\end{array}	
\end{equation}
The bound and regularity assumptions on its coefficient assure that the solutions to this strongly elliptic system has the following stability bound:
\begin{equation}\label{EQ:Model Stab Sigmaf 2}
\|w_x\|_{L^2(X)}+\|w_m\|_{L^2(X)} \le \fc_2 \|\sigma_{a,xf}u_x\|_{L^2(X)},
\end{equation}
with $\fc_2$ a constant. The stability bound~\eqref{EQ:Model Stab Sigmaf} then follows from~\eqref{EQ:Model Stab Sigmaf 1} and ~\eqref{EQ:Model Stab Sigmaf 2}.
\end{proof}

\section{Numerical uncertainty quantification}
\label{SEC:Alg}

We now implement a computational procedure, based on the computational uncertainty quantification machinery developed in the past years, for a more quantitative characterization of impact of uncertainties in quantitative photoacoustics. Our main focus here is on developing new computational techniques for general uncertainty quantification problems, but rather on the application of existing methods to PAT and fPAT image reconstruction problems. In a nutshell, we model $\fu$ as a random process, following some given probability law. We then construct a large population of random samples of $\fu$, and evaluate the corresponding inverse solutions $\fo$. Once we have these random samples of $\fo$, we study its statistics, mainly average and variance since we do not have efficient ways to visualize the sample distribution.

We assumed here that we can collect ultrasound data from $N_s\ge 2$ optical illuminations sources $\{g^s\}_{s=1}^{N_s}$ for the inverse problems (to ensure that we have enough data for unique reconstructions of the objective coefficients).

\subsection{Generalized polynomial chaos approximation}
\label{SUBSEC:Dim}

Let $(\Omega, \cF, \bbP)$ be an abstract probability space. We model our uncertainty coefficient by a random process $\fu(\bx, \omega)$, $(\bx, \omega)\in X\times\Omega$, that satisfies $0<\underline{\fu}\le \fu(\bx, \omega)\le \overline{\fu}<+\infty$ for some $\underline{\fu}$ and $\overline{\fu}$. To make the uncertainty quantification problem computationally feasible, that is, to reduce the dimension of the space of admissible uncertainty coefficients, we restrict ourselves to the class of random processes that admit a simple spectral representation. 

To be more precise, let $\xi(\omega): \Omega \mapsto \bbR$ be a uniform random variable with density function $\mu(\xi)$ and $\{\phi_{k}\}$ the family of probability Legendre polynomials, orthogonal with respect to the weight $\mu(\xi)$. We assume that the uncertainty coefficient $\fu$ is well-approximated by the following $K_\fu+1$ term truncated generalized polynomial chaos~\cite{Najm-ARFM09,XiKa-SIAM02}:
\begin{equation}\label{EQ:PCE fu}
	\fu(\bx, \xi(\omega)) = \sum_{k=0}^{K_\fu} \wh\fu_k(\bx) \phi_k(\xi(\omega)).
\end{equation}
For the purpose of simplifying the presentation, we assume that the polynomial bases are normalized in the sense that $\bbE\{\phi_k(\xi) \phi_{k'}(\xi) \} = \delta_{kk'}$. Interested readers are referred to~\cite{LeKn-Book10,MaNa-JCP09,Najm-ARFM09,PeIaNo-Book15,XiKa-SIAM02} for detailed discussions on representing random variables of different types using appropriate orthogonal polynomials. 

With the representation~\eqref{EQ:PCE fu}, we can generate random samples of $\fu$ once we know the coefficient functions $\{\wh\fu_k \}_{k=0}^{K_\fu}$ which do not depend on realizations. 

Let us emphasize that the sample uncertainty coefficients we constructed from~\eqref{EQ:PCE fu} has to satisfy the regularity and bounds requirements we imposed on the uncertainty coefficients. The regularity requirements in the space variable are satisfied by imposing smoothness on the coefficient functions $\{\wh\fu_k\}$. To satisfy the bounds requirements, we perform a linear rescaling on $\fu$. More precisely, assuming that $\fu$ generated by ~\eqref{EQ:PCE fu} satisfies $\underline{\fm}\le \fu(\bx,\omega)\le \overline{\fm}$, we perform $\frac{\overline{\fu}-\underline{\fu}}{\overline{\fm}-\underline{\fm}}\fu(\bx, \omega)+\frac{\overline{\fu}\underline{\fm}-\underline{\fu}\overline{\fm}}{\overline{\fm}-\underline{\fm}}\to \fu(\bx, \omega)$ to put $\fu$ in the range $[\underline{\fu}, \overline{\fu}]$.

\subsection{Constructing model predictions}

Once we know how to construct samples of the uncertainty coefficient, we need to solve inverse problems with these samples to compute the corresponding objective coefficients. We do this in two steps, described in this section and the next one respectively.

For each sample of the uncertainty coefficient $\fu(\bx, \xi)$, we need to evaluate the corresponding acoustic data predicted by the mathematical models with this uncertainty coefficient and the true objective coefficient which we denote by $\fo_t$: $y=f(\fu(\bx, \xi), \fo_t)$. The most accurate way of doing this is to solve the diffusion equation~\eqref{EQ:Diff PAT} (or the diffusion system~\eqref{EQ:Diff fPAT} in fPAT)) and then the acoustic wave equation~\eqref{EQ:Acous} for each realization of $\fu(\bx, \xi)$ (and the true objective coefficient $\fo_t$). However, this approach is computationally too expensive when a large number of samples need to be constructed.

Here we take advantage of the fact that, under the regularity assumptions of the coefficients involved, the solutions to the mathematical models in PAT and fPAT, therefore also the acoustic data predicted, are sufficiently regular with respect to these coefficients; see for instance~\cite[Lemma 2.1]{DiReVa-IP15}. Therefore, when these coefficients are smooth with respect to the random variable $\xi$, the solution to the equations are also sufficiently smooth with respect to the random variable. 


The smooth dependence of the solutions to the diffusion equation and the acoustic wave equation on the random variable $\xi$ indicates that these solutions can be represented efficiently using polynomial chaos representations. Let $K_u$ a positive integer, and 
\begin{equation}\label{EQ:PCE Diff}
u^s(\bx, \xi) = \sum_{k=0}^{K_u} \wh u_k^s(\bx) \, \phi_k(\xi)
\end{equation}
be the truncated polynomial chaos expansion of the diffusion solution with source $g^s$ ($1\le s\le N_s$). Using the standard projection procedure, we verify that $\wh u_k^s$ solves the following coupled diffusion system, $1\le k\le K_u$, $1\le s\le N_s$:
\begin{equation}\label{EQ:Diff PAT PCE}
	\begin{array}{rcll}
		-\dsum_{k'=1}^{K_u} \nabla\cdot \gamma_{kk'} \nabla \wh u_{k'}^s(\bx) + \dsum_{k'=1}^{K_u} \sigma_{kk'} \wh u_{k'}^s(\bx) &=& 0,& \mbox{in}\ \  X\\
		\wh u_k^s(\bx) &=& \wh g_k^s(\bx),& \mbox{on}\ \ \partial X
	\end{array}
\end{equation}
where 
\[
\gamma_{kk'}(\bx) = \sum_{j=0}^{K_\gamma} w_{kk'j} \wh \gamma_j(\bx), \ \ \  \sigma_{kk'}(\bx) = \sum_{j=0}^{K_\sigma} w_{kk'j} \wh \sigma_j(\bx),\ \ \ \mbox{ and }\ \ \wh g_k^s(\bx)=w_k\ g^s(\bx) 
\]
with the weights defined as $w_{kk'j}=\bbE\{\phi_k \phi_{k'} \phi_j\}$ and $w_k=\bbE\{\phi_k\}$. The functions $\{\wh \gamma_j\}_{j=0}^{K_\gamma}$  and $\{\wh \sigma_j\}_{j=0}^{K_\sigma}$ are the coefficients in the truncated polynomial chaos representation of $\gamma$ and $\sigma$ in the form of~\eqref{EQ:PCE fu}. 

This system of diffusion equations allows us to solve for $\wh u_k^s(\bx)$ as functions of the polynomial chaos expansion of the coefficients $\gamma$ and $\sigma_a$, which then allow us to to construct random samples of $u^s(\bx, \omega)$ following the polynomial chaos expansion~\eqref{EQ:PCE Diff}.

In the same manner, let $K_p$ be a positive integer and
\begin{equation}\label{EQ:PCE Acous}
	p^s(t, \bx, \omega) = \sum_{k=0}^{K_p} \wh p_k^s(t,\bx) \, \phi_k(\xi)
\end{equation}
be the truncated polynomial chaos expansion of the ultrasound pressure field. We then verify that the functions $\wh p_k(t, \bx)$ solves the following coupled system of acoustic wave equations:
\begin{equation}\label{EQ:Acous PCE}
	\begin{array}{rcll}
		\dsum_{k'=0}^{K_p} c_{kk'}\dfrac{\partial^2 \wh p_k^s}{\partial t^2} -\Delta \wh p_k^s &=&0, 
			&\text{in}\ \bbR_+\times\bbR^d\\
		\wh p_k^s(0,\bx)&=& \wh H_k^s \chi_X,
			&\text{in}\ \bbR^d\\ 
		\dfrac{\partial \wh p_k^s}{\partial t}(0,\bx)&=&0,&\text{in}\ \bbR^d
	\end{array}
\end{equation}
where 
\[
	c_{kk'}=\sum_{j=0}^{K_c} w_{kk'j} \wh c_j \ \ \ \mbox{and}\ \ \ \wh H_k^s(\bx)=\sum_{k'=0}^{K_\Gamma} \sum_{j=0}^{K_\sigma} \sum_{i=0}^{K_u} w_{kk'ji} \wh \Gamma_{k'} \wh\sigma_j \wh u_i^s
\]
with $\{\wh c_j\}_{j=0}^{K_c}$ and $\{\wh \Gamma_j\}_{j=0}^{K_\Gamma}$ being the coefficients in the polynomial chaos expansion of $\frac{1}{c^2(\bx, \xi)}$ and $\Gamma$ respectively, and the weights $w_{kk'ji}=\bbE\{\phi_k\phi_{k'}\phi_j \phi_i\}$.

The system of equations~\eqref{EQ:Diff PAT PCE} and ~\eqref{EQ:Acous PCE} now enable us to compute the PCE coefficients of ultrasound data from given PCE coefficients for the uncertainty coefficients involved, i.e., a subset of $\{\wh c_k\}_{k=1}^{K_c}$, $\{\wh \Gamma_k\}_{k=1}^{K_\Gamma}$, $\{\wh \gamma_k\}_{k=1}^{K_\gamma}$ and $\{\wh \sigma_k\}_{k=1}^{K_\sigma}$.

\subsection{Evaluating uncertainty in objective coefficients}

The next step is to study how the uncertainty in the data caused by the inaccuracy in the uncertainty coefficient is propagated into the objective coefficient that we are interested in reconstructing. 

The most accurate way of doing this is to solve the inverse problem for each realization of the uncertainty coefficient and study the distribution of the reconstructed coefficients. In terms of the abstract formulation in ~\eqref{EQ:Model Gen}, this means that we solve
\[
	f(\fo, \fu_t)=y(\omega) \equiv f(\fo_t, \fu(\bx, \omega))
\]
for $\fo$ for each $\omega$. This is computationally intractable for practical purpose. Bayesian type of inversion methods, such as these developed in~\cite{KaSo-Book05,MaZa-IP09,Stuart-AN10,WaZa-IP05}, are alternative ways to study such uncertainty quantification problems. The main issue here is that to apply these Bayesian methods, we need to be able to evaluate the likelihood function for each given candidate objective coefficient $\fo$. This is again computationally very hard to do since we do not have an explicit formula for the likelihood function which is the law of the ``noise'', $f(\fo_t, \fu(\bx, \omega))-f(\fo_t, \fu_t)$. We only have samples of the noise, as we constructed in the previous section. Fitting these samples into a known parameterized distribution with an explicit expression, for the instance the multi-dimensional Gaussian distribution,  is possible but would require that the exact form of the distribution been known {\em a priori}, which is hard to do here due to the high nonlinearity of the map $\fu \mapsto \fo(\fu)$.

Here we propose a method that is again based on the polynomial chaos representation: we represent the objective coefficient with polynomial chaos and reconstruct the coefficient of the representation directly from the data represented by the polynomial chaos coefficients $\{\{\wh p_k^{s*}(t,\bx)\}_{k=0}^{K_p}\}_{s=1}^{N_s}$. 

\paragraph{Step I.} The first step is to propagate the uncertainty from the acoustic data, $\{\{\wh p_k^*(t,\bx)\}_{k=0}^{K_p}\}_{s=1}^{N_s}$, into the initial pressure field under the true ultrasound speed $c_0$. We perform this using a time-reversal strategy~\cite{Hristova-IP09}. Let $t'=T-t$, we solve the coupled wave equations, $0\le k\le K_p$, $1\le s\le N_s$:
\begin{equation}\label{EQ:Acous PCE TR}
	\begin{array}{rcll}
		\dfrac{1}{c_0^2(\bx)}\dfrac{\partial^2 \wh q_k^s}{\partial {t'}^2} -\Delta \wh q_k^s &=&0, 
			&\text{in}\ (0, T] \times X \\
		\wh q_k^s(0,\bx)&=& 0,
			&\text{in}\ X \\ 
		\dfrac{\partial \wh q_k^s}{\partial t'}(0,\bx)&=&0,&\text{in}\ X \\
		\wh q_k^s(t',\bx)&=& \wh p_k^{s*}(t',\bx),&\text{on}\ \partial X\\
	\end{array}
\end{equation}
with true ultrasound speed $c_0$ until time $t'=T$ to reconstruct the coefficients of the polynomial chaos expansion of the initial pressure field $H$:
\[
	\wh H_k^{s*}(\bx) = \wh q_k^s(T, \bx), \ \ 0\le k\le K_p,\ 1\le s\le N_s.
\]
In our numerical simulations, we take measurement $T$ long enough to ensure a faithful reconstruction of the PCE coefficients of the initial pressure fields $\{\{\wh H_k^{s*}(\bx)\}_{k=1}^{K_p}\}_{s=1}^{N_s}$.

\paragraph{Step II.} The next step is to propagate the uncertainty in reconstructed initial pressure field $H$ to the objective coefficients to be reconstructed. We solve this problem via a least-square procedure. For instance, in the case where we are interested in reconstructing $(\Gamma, \sigma_a)$, treating $\gamma$ as the uncertainty coefficient, we reconstruct the coefficients $\{\wh \Gamma_k\}_{k=0}^{K_\Gamma}$ and  $\{\wh \sigma_k\}_{k=0}^{K_\sigma}$ as the solution to the following minimization problem:
\begin{equation}\label{EQ:LS Min}
	\min_{\{\wh \Gamma_j\}_{j=0}^{K_\Gamma}, \{\wh \sigma_k\}_{k=0}^{K_\sigma} }\dfrac{1}{2} \sum_{s=1}^{N_s}\sum_{i=0}^{K_p} \left\| \sum_{j=0}^{K_\Gamma} \sum_{k=0}^{K_\sigma} \sum_{k'=0}^{K_u} w_{ijkk'} \wh \Gamma_j \wh\sigma_{k} \wh u_{k'}^s -\wh H_i^{s*}\right\|_{L^2(X)}^2
\end{equation}
subject to the constraints, $0\le i\le K_u$, $1\le s\le N_s$:
\begin{equation}\label{EQ:Diff PAT PCE Recon}
	\begin{array}{rcll}
		-\nabla\cdot \gamma_0(\bx)\nabla \wh u_i^s(\bx) + \dsum_{k'=0}^{K_u}  \dsum_{k=0}^{K_\sigma} w_{ik'k} \wh\sigma_{k} \wh u_{k'}^s &=& 0,& \mbox{in}\ \  X\\
		\wh u_i^s(\bx) &=& \wh g_i^s(\bx),& \mbox{on}\ \ \partial X
	\end{array}
\end{equation}
where $\gamma_0$ is the true diffusion coefficient and the weights $w_{ikk'}$ and $w_{ijkk'}$ are defined the same way as before.

We solve the least-square minimization problem~\eqref{EQ:LS Min} with a quasi-Newton method based on the Broyden-Fletcher-Goldfarb-Shanno (BFGS) rule for Hessian update that we implemented in~\cite{ReBaHi-SIAM06}. We will not describe in detail this classical optimization algorithm but refer interested readers to~\cite{NoWr-Book06} for in-depth discussions on theoretical and practical aspects of the algorithm.

In Algorithm~\ref{ALG:UC}, we outline our implementation of the uncertainty quantification procedure for the case where $\fu=\gamma$ is the uncertainty coefficient and $\fo=(\Gamma, \sigma_a)$ is the objective coefficient. We need to change the algorithm only slightly for other combinations of uncertainty and objective coefficients. 
\begin{algorithm}
	\caption{Numerical Uncertainty Characterization Procedure} 
	\begin{algorithmic}[1]
		\State Set the PCE coefficients for $\gamma(\bx,\omega)$, $\{\wh \gamma_k\}_{k=0}^{K_\gamma}$
		\For{$s=1$ to $N_s$} 
		\State Solve the forward diffusion model~\eqref{EQ:Diff PAT PCE} with illumination source $g^s$
		\State Construct PCE coefficients for the initial pressure field, i.e. $\{H_k^s(\bx)\}_{k=0}^{K_p}$
		\State Solve the coupled system~\eqref{EQ:Acous PCE} with $\{H_k^s(\bx)\}_{k=0}^{K_p}$ for $\{\wh p_k^{s*}(t, \bx)\}_{k=0}^{K_p}$
		\State Reverse time for data $\{\wh p_k^{s*}(t, \bx)\}_{k=0}^{K_p}$
		\State Solve the coupled wave equations ~\eqref{EQ:Acous PCE TR} to reconstruct $\{H_k^{s*}\}_{k=0}^{K_p}$
		\EndFor		
		\State Solve the minimization problem~\eqref{EQ:LS Min} to reconstruct $\left(\{\wh \Gamma_j\}_{j=0}^{K_\Gamma}, \{\wh \sigma_k\}_{k=0}^{K_\sigma}\right)$
		\State Perform statistics on $\Gamma$ and $\sigma_a$ using their reconstructed PCE coefficients
	\end{algorithmic}
	\label{ALG:UC}
\end{algorithm}

\section{Numerical simulations}
\label{SEC:Simu}

We now present some numerical simulations,  following the computational procedure that we presented in Section~\ref{SEC:Alg}, to illustrate the main ideas of this work. We focus on two-dimensional simulations and select the simulation domain to be the square $X=(0, 2) \times (0, 2)$. 

To avoid solving the acoustic wave equation~\eqref{EQ:Acous} in unbounded domain $\bbR^2$, we replace~\eqref{EQ:Acous} with the same equation in $X$ with Neumann boundary condition. The measured data is now the solution of the wave equation on $\partial X$. We discretize the wave equation with a standard second order finite difference scheme in both spatial and temporal variables.

For the diffusion equation~\eqref{EQ:Diff PAT}, we use a first-order finite element method on an unstructured triangular mesh. The quantities on the triangular mesh are interpolated onto the uniform mesh, and vice versa, using a high-order interpolation scheme when needed. In all the simulations we performed, we verified, through mesh refining, that the interpolation errors are much smaller than the discretization error.

To construct samples of uncertainty coefficients, we observe from the polynomial chaos representation~\eqref{EQ:PCE fu} that the mean and variance of $\fu$ are given respectively as
\[
	\bbE\{\fu\} = \wh\fu_0 \qquad \mbox{and}\qquad {\mathbb Var}\{\fu\} = \sum_{k=1}^{K_\fu}  \wh \fu_k ^2.
\]
This gives us simple ways to control the mean and the variance of the random uncertainty coefficients. In our simulations, we take $\wh \fu_0$ as the true value of the uncertainty coefficient and add randomness as perturbations to $\wh\fu_0$, through the coefficient functions $\{\wh \fu_k\}_{k=1}^{K_\fu}$. We consider two types of random perturbations: the ones that are smooth in space and the ones that are piecewise smooth (in a special way) in space. 

\paragraph{Spatially smooth uncertainty coefficients.} To construct spatially smooth perturbations to the uncertainty coefficients, we take the PCE coefficients $\{\wh \fu_k\}_{k=1}^{K_c}$ as linear combinations of the Laplace-Neumann eigenfunctions on domain $X$. To be precise, let $(\lambda_\bn, \varphi_\bn)$ ($\bn=(n, m)\in \bbN_0\times \bbN_0$) be the eigenpair of the eigenvalue problem:
\begin{equation*}\label{EQ:Laplace}
	-\Delta \varphi = \lambda \varphi, \quad \mbox{in}\ \ X, \qquad \bnu\cdot\nabla \varphi = 0, \quad \mbox{on}\ \ \partial X.
\end{equation*}
Then $\lambda_\bn=\left(\dfrac{n\pi}{2}\right)^2+\left(\dfrac{m\pi}{2}\right)^2$, and
\[
	\varphi_\bn (x, y) = \cos(\frac{n\pi}{2} x) \cos(\frac{m\pi}{2} y) .
\]
In our numerical simulations, we take 
\begin{equation}\label{EQ:Smooth Uncert Coeff}
	\wh \fu_k = \sum_{n+m=k} c_\bn\ \varphi_\bn(x,y), \ \ 1\le k\le K_\fu
\end{equation}
with $\{c_\bn\}$ uniform random variables in $[-1, 1]$. Note that $\{c_\bn\}$ are fixed once they are generated. They do not change during the later stage of the uncertainty quantification process. Once the coefficients $\{\wh \fu_k\}_{k=1}^{K_\fu}$ are generated, we perform a linear scaling on them to get the variance of $\fu$ to the size that we need.

\paragraph{Piecewise smooth uncertainty coefficients.} To construct piecewise smooth perturbations to the uncertainty coefficients, we take the PCE coefficients $\{\wh \fu_k\}_{k=1}^{K_c}$ as linear combinations of the characteristic functions of $J$ randomly-placed disks in $X$. That is,
\begin{equation}\label{EQ:Disc Uncert Coeff}
	\wh \fu_k  = \sum_{j=1}^J c_{k, j} \chi_{D_j}(\bx), \qquad D_j=\{ \bx \ |\ |\bx-\bx_j|\le r_j\}, \ \ 1\le k\le K_\fu .
\end{equation}
As in the previous case, the centers $\{\bx_k \}_{k=1}^{K_\fu}$ and the radii $\{r_k\in[0.1,\ 0.2]\}_{k=1}^{K_\fu}$ of the disks, as well as the weights $\{c_{k,j}\}$ (uniform random variables in $[-1, 1]$) for the linear combinations, are fixed once they are generated. They do not change during the later stage of the uncertainty quantification process. We also rescale the amplitude of the perturbations to control the size of the variance of the perturbations. Note that, the theoretical analysis in the previous sections needs the uncertainty coefficients to be sufficiently smooth. In our numerical simulations, however, we try to neglect this smoothness requirement to see what would happen if the uncertainty coefficients are discontinuous, as long as the equations involved are still numerically solvable.

\subsection{Ultrasound speed uncertainty}

We first present some simulations on the reconstruction of optical coefficients under uncertain ultrasound speeds.
\begin{figure}[ht]
	\centering
	\includegraphics[angle=0,width=0.24\textwidth]{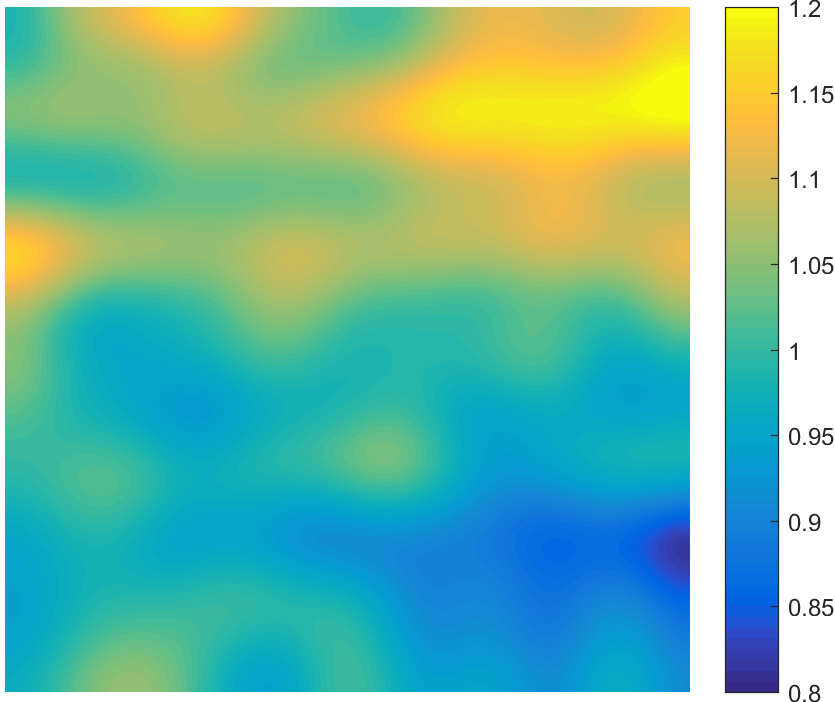} 
	\includegraphics[angle=0,width=0.24\textwidth]{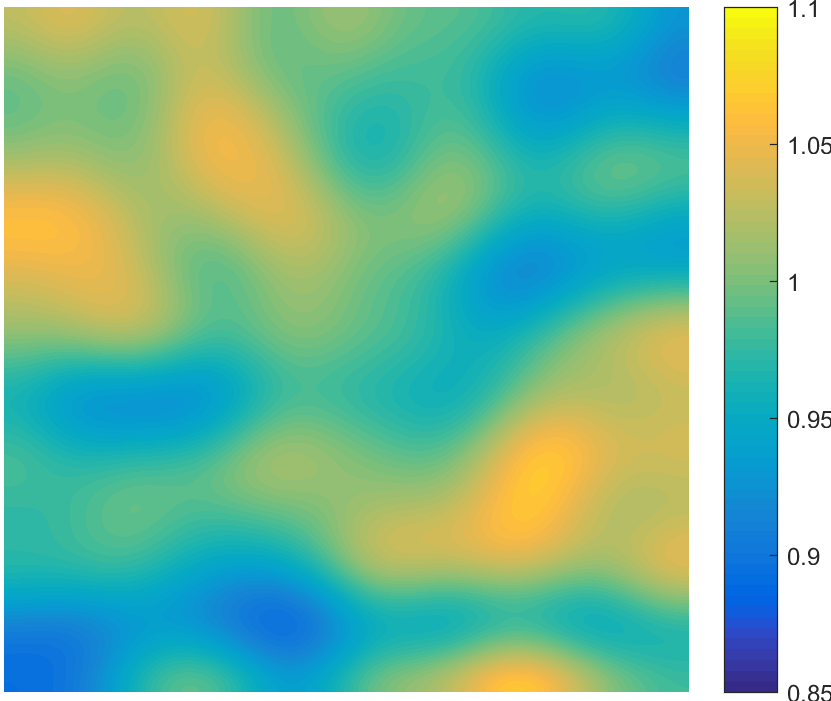}
	\includegraphics[angle=0,width=0.24\textwidth]{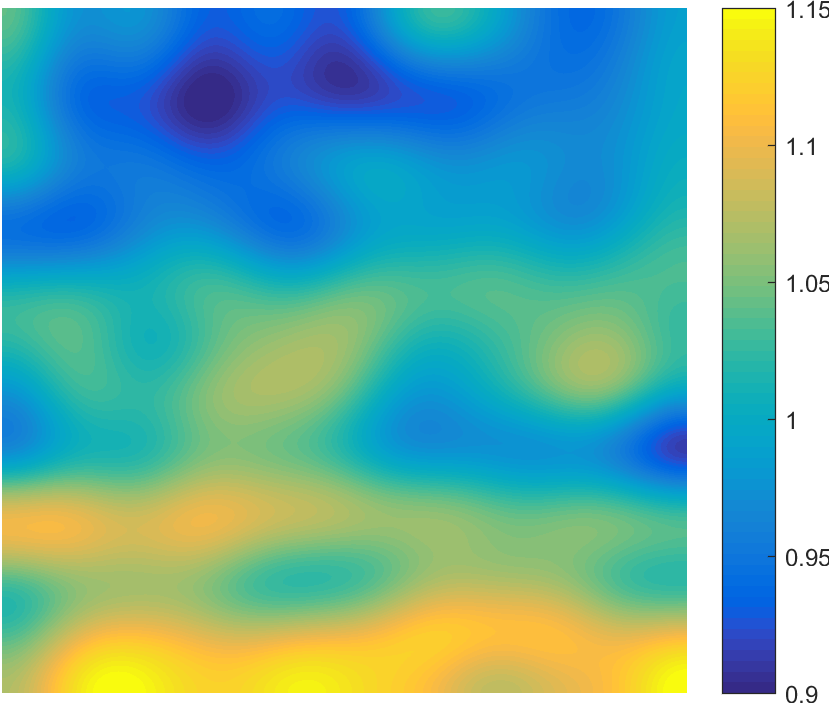} 
	\includegraphics[angle=0,width=0.24\textwidth]{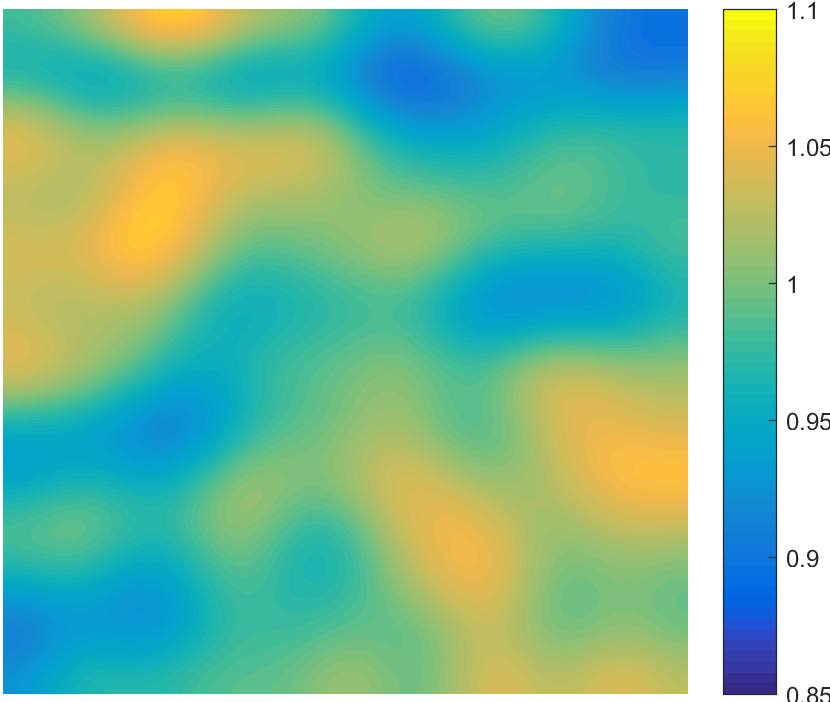} \\
	\includegraphics[angle=0,width=0.24\textwidth]{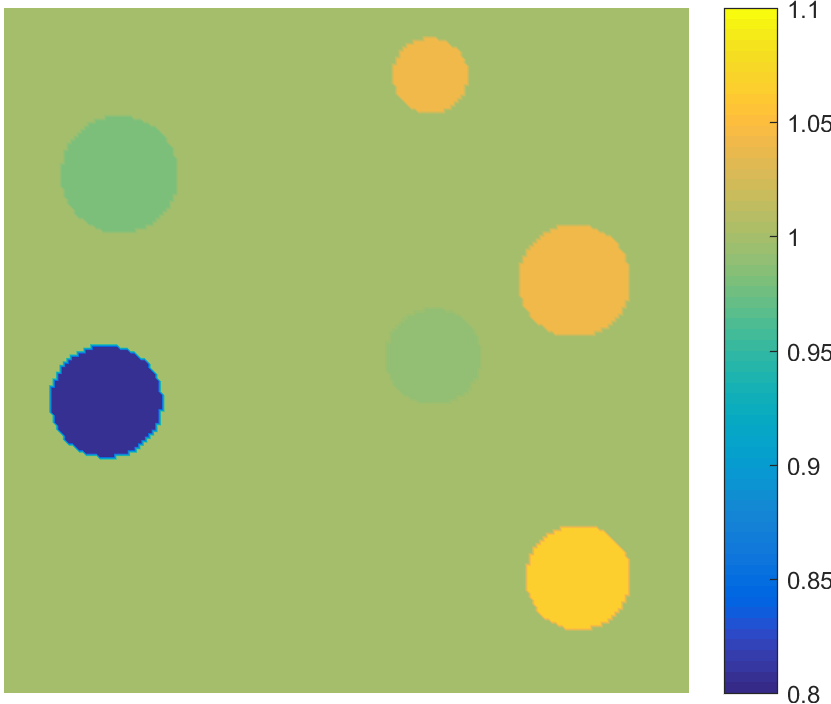}
	\includegraphics[angle=0,width=0.24\textwidth]{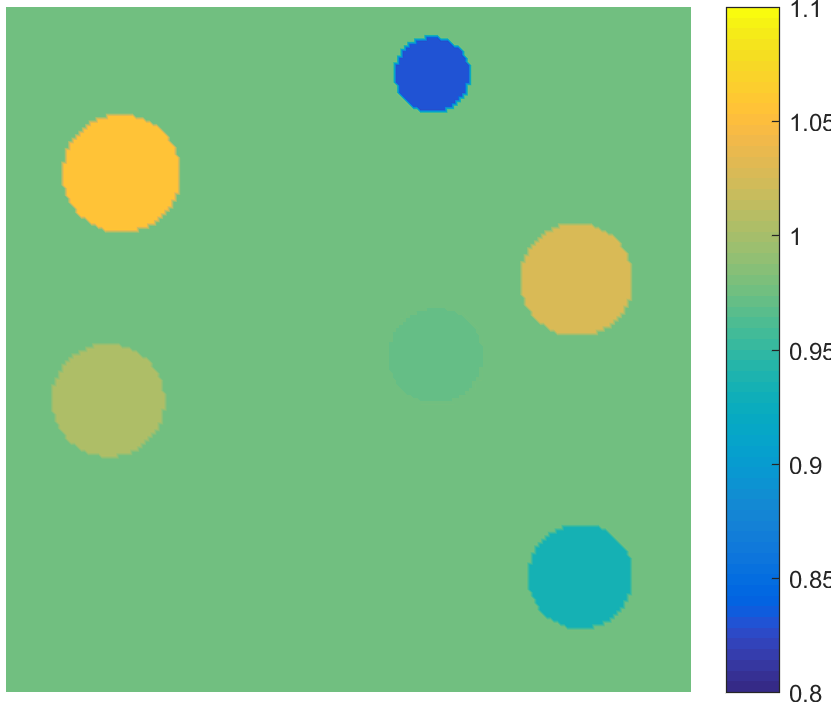}
	\includegraphics[angle=0,width=0.24\textwidth]{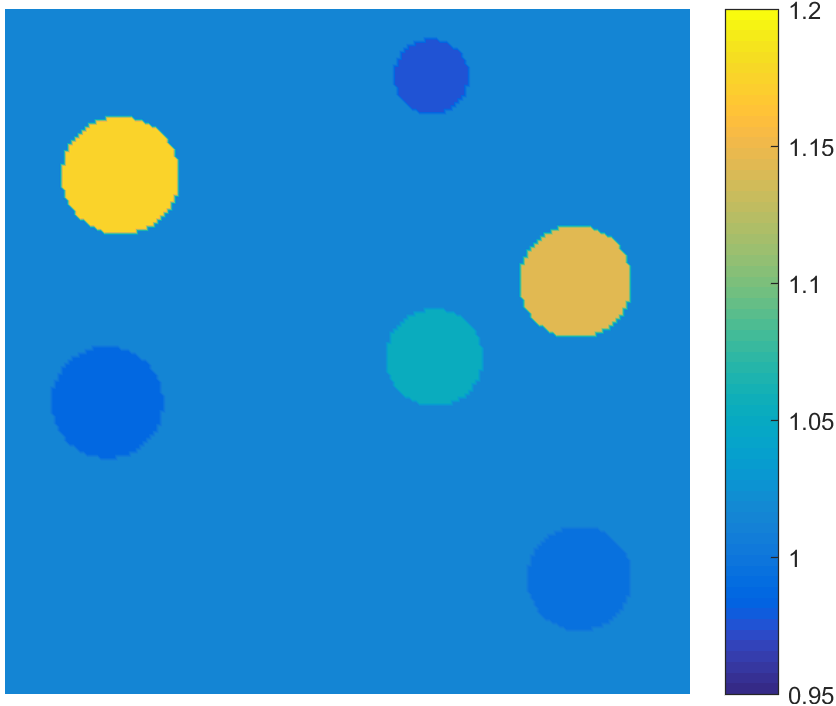}
	\includegraphics[angle=0,width=0.24\textwidth]{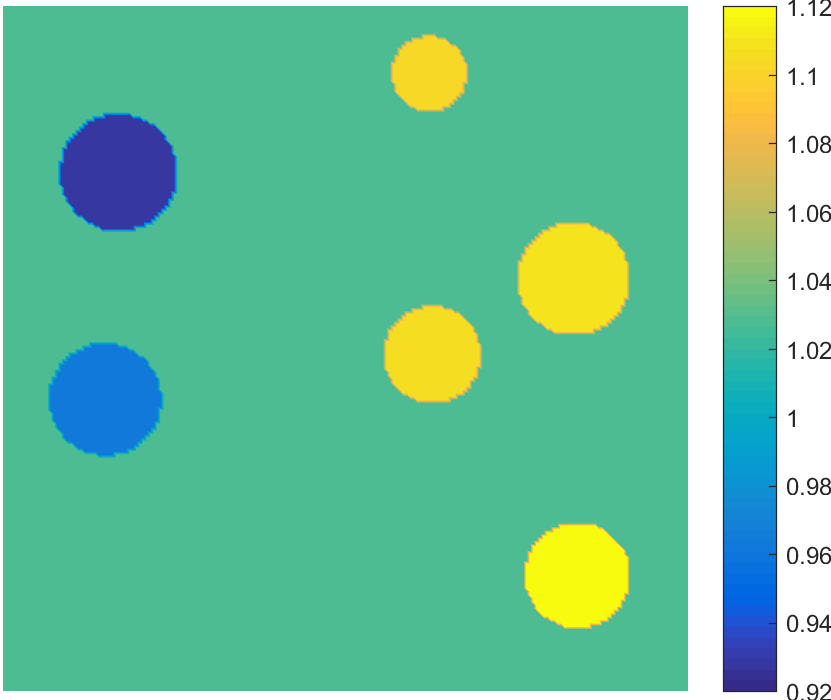}
	\caption{Typical realizations of (i) smooth (top row) and (ii) piecewise smooth (bottom row) ultrasound speed function.}
	\label{FIG:Ultrasound Speed}
\end{figure}

\begin{figure}[ht]
	\centering
	\includegraphics[angle=0,width=0.25\textwidth]{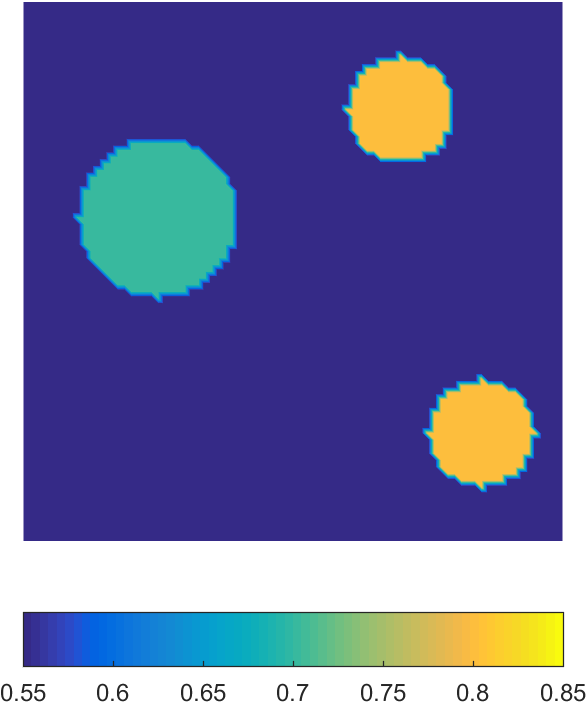} 
	\includegraphics[angle=0,width=0.24\textwidth]{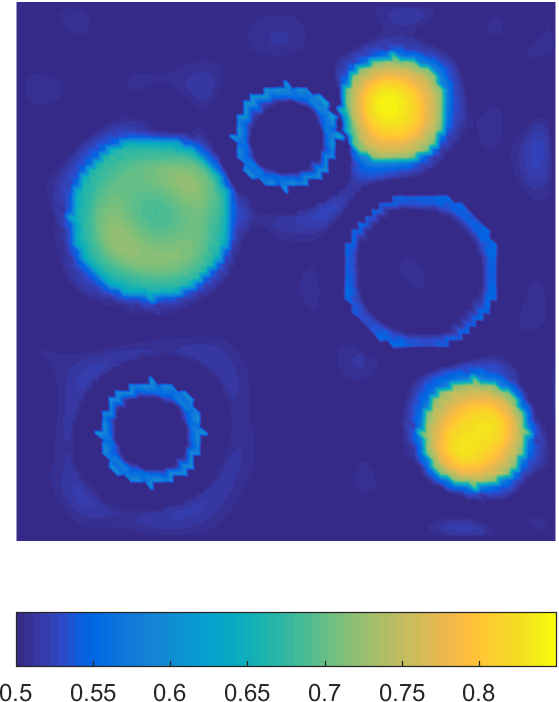}
	\includegraphics[angle=0,width=0.24\textwidth]{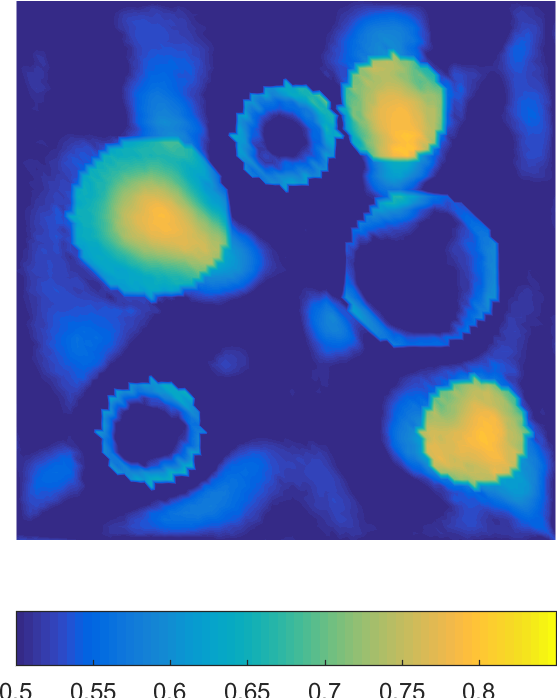}
	\includegraphics[angle=0,width=0.24\textwidth]{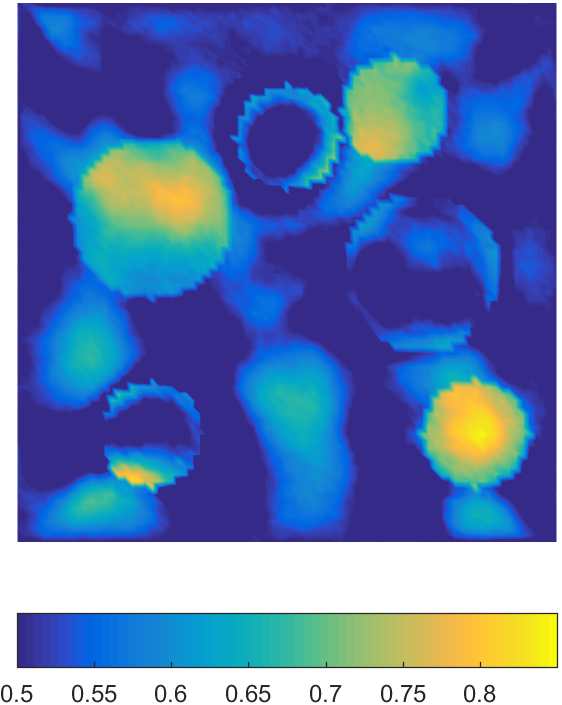} 
	\\
	\includegraphics[angle=0,width=0.24\textwidth]{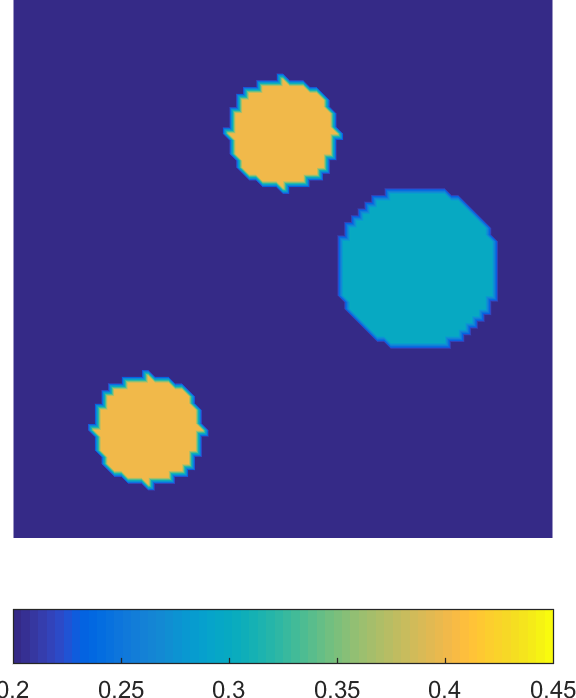}
	\includegraphics[angle=0,width=0.24\textwidth]{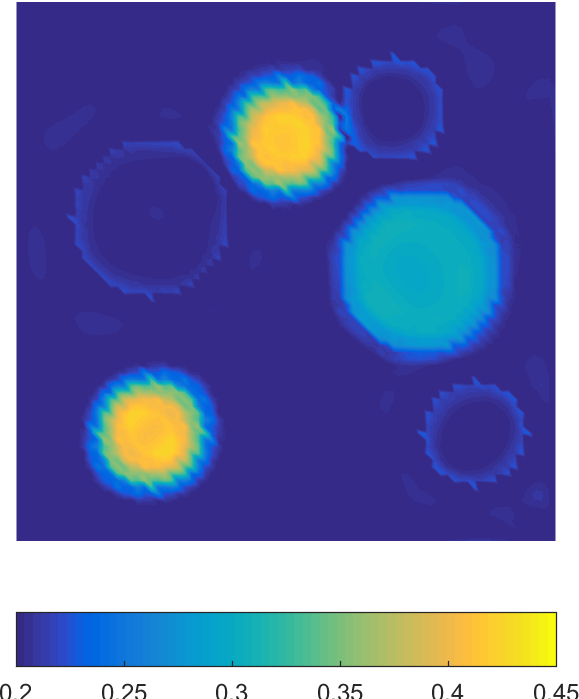}
	\includegraphics[angle=0,width=0.24\textwidth]{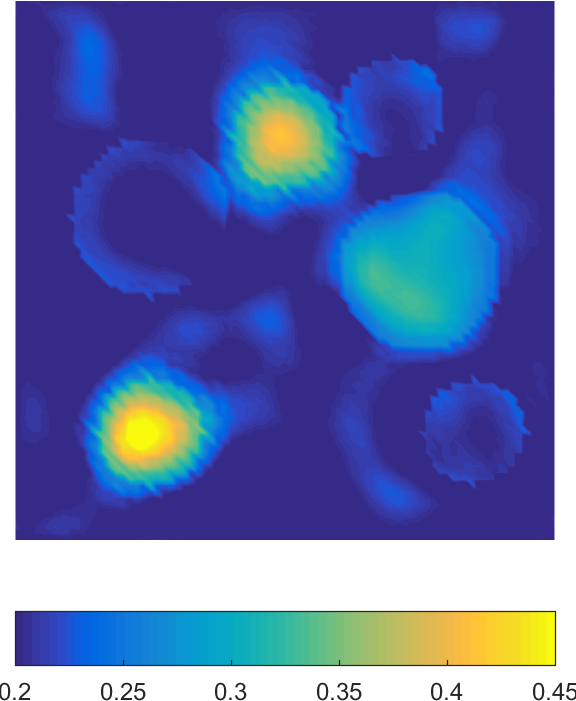}
	\includegraphics[angle=0,width=0.24\textwidth]{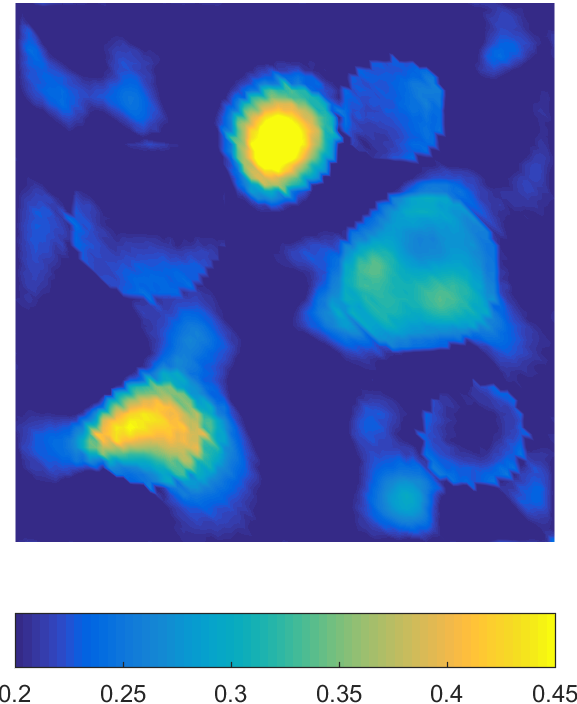}
	\caption{The true optical coefficient pair $\fo=(\Gamma, \sigma_a)$ (left), the mean of the reconstructed pair $(\wh\Gamma_0, \wh \sigma_0)$ (second column) and two realizations of the reconstructions formed from the reconstructed PCE coefficients (the two columns on the right).}
	\label{FIG:GS-v-C Recon}
\end{figure}
\paragraph{Experiment I. [Ultrasound Speed Uncertainty in PAT]} In the first numerical experiment, we attempt to reconstruct the optical coefficient pair $\fo=(\Gamma, \sigma_a)$ from ultrasound data sets generated from four different illumination sources. We set the true sound speed to be the constant $c_0(\bx)=1.0$ and generate random realizations of the ultrasound speed around this value by selecting appropriate PCE coefficients according to~\eqref{EQ:Smooth Uncert Coeff}. The random perturbations created are therefore smooth in space. We take $K_c=12$ PCE modes in the construction after numerical tests showed that increasing $K_c$ does not change the simulation results significantly anymore; see the top row of Figure~\ref{FIG:Ultrasound Speed} for some typical realizations of  the ultrasound speed in this setup. 

In Figure~\ref{FIG:GS-v-C Recon} we show the true coefficients, the average of the reconstructed coefficients (that is, $(\wh\Gamma_0,\ \wh\sigma_0)$) and two realizations of the reconstructed coefficients (that is, $(\Gamma, \sigma_a)$ that we formed from the reconstructed PCE coefficients using the approximation~\eqref{EQ:PCE fu}). We observe that the average of the reconstructions, $(\wh\Gamma_0,\ \wh\sigma_0)$, is very close to the true coefficients as it should be (see, for instance, previously published results in~\cite{DiReVa-IP15}), and the variance, as seen from the two realizations on the right two columns in Figure~\ref{FIG:GS-v-C Recon}, is fairly large. To quantitatively measure the impact of the uncertainty in ultrasound speed on the reconstruction of the optical coefficients, we look at the (relative) standard deviation of the reconstruction as a function of the (relative)  standard deviation of the uncertainty coefficients. More precisely, we define
\[
	\cE_\fo \equiv \dfrac{\left\|\sqrt{\sum_{k=1}^{K_\fo} |\wh\fo_k|^2}\right\|_{L^2(X)}}{\|\wh \fo_0\|_{L^2(X)}} \qquad \mbox{and}\qquad \cE_\fu \equiv \dfrac{\left\|\sqrt{\sum_{k=1}^{K_\fu} |\wh\fu_k|^2}\right\|_{L^2(X)}}{\|\wh \fu_0\|_{L^2(X)}}
\]
for the objective coefficients (to be reconstructed) and the uncertainty coefficients respectively. Note that we have integrated all quantities over the domain to get numbers instead of functions since we don't have better ways to visualize the dependence.

In Figure~\ref{FIG:GS v C Error}, Figure~\ref{FIG:gaS v C Error} and Figure~\ref{FIG:Gga v C Error} we show the uncertainty level in the reconstructed objective coefficients versus the uncertainty level in uncertainty coefficient (i.e. the ultrasound speed) in the case of $\fo=(\Gamma, \sigma_a)$, $\fo=(\gamma, \sigma_a)$ and $\fo=(\Gamma, \gamma)$, respectively. We observe that in all three cases, when the uncertainty level in the ultrasound speed, measured by $\cE_c$, is small, it has roughly linear impact on the reconstructions. When the uncertainty level becomes larger, its impact becomes super-linear, but still very controllable. We do not have sufficient computational power to get enough data points to reliably fit an accurate curve between $\cE_\fo$ and $\cE_\fu$. However, the general relation between $\cE_\fo$ and $\cE_\fu$ is obvious enough to be observed in the existing simulation data.
\begin{figure}[ht]
\centering
\includegraphics[width=0.3\textwidth,height=0.2\textheight]{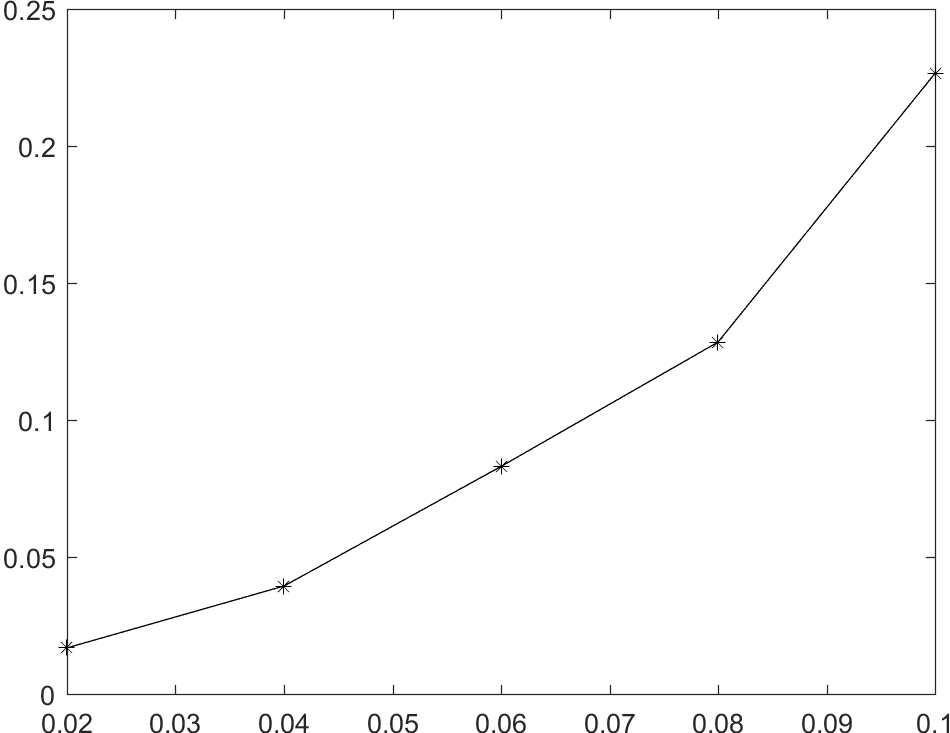}\hskip 1cm
\includegraphics[width=0.3\textwidth,height=0.2\textheight]{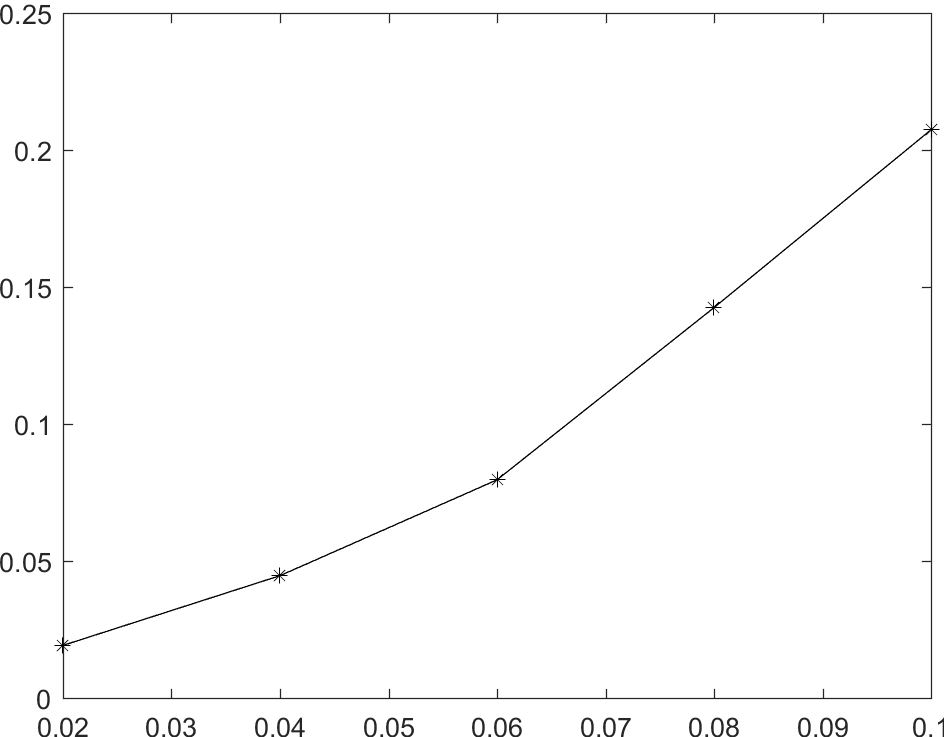}
\caption{Relative standard deviation of the objective coefficient pair $\fo=(\Gamma, \sigma_a)$, $\cE_\fo$, versus the relative standard deviation of the uncertainty coefficient $\fu=c$, $\cE_\fu$, for Experiment I.}
\label{FIG:GS v C Error}
\end{figure}
\begin{figure}[ht]
\centering
\includegraphics[width=0.3\textwidth,height=0.2\textheight]{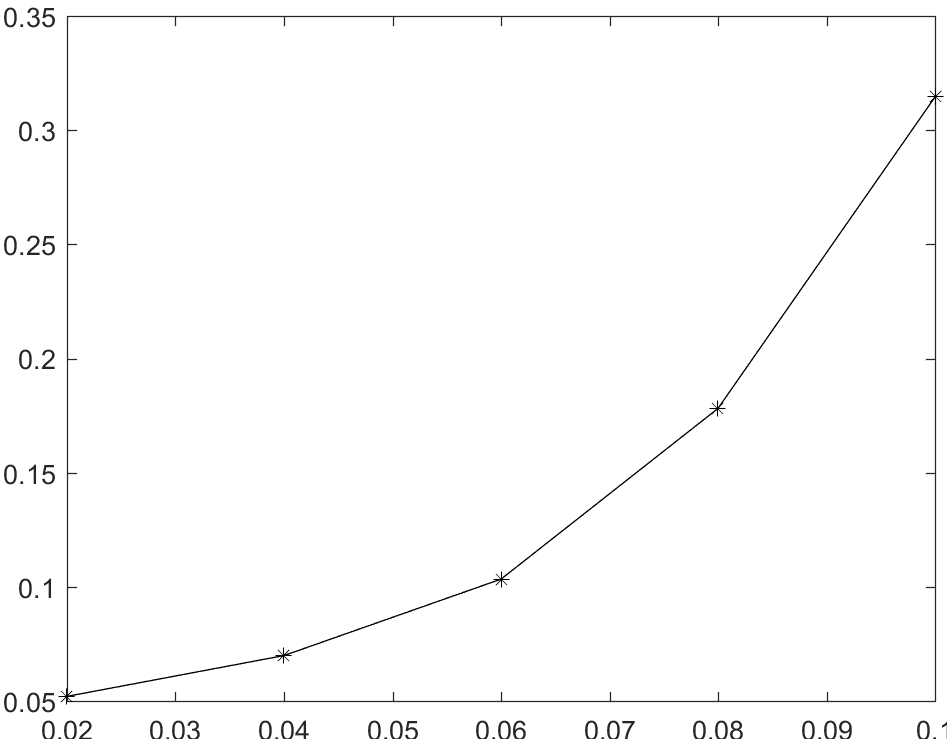}\hskip 1cm
\includegraphics[width=0.3\textwidth,height=0.2\textheight]{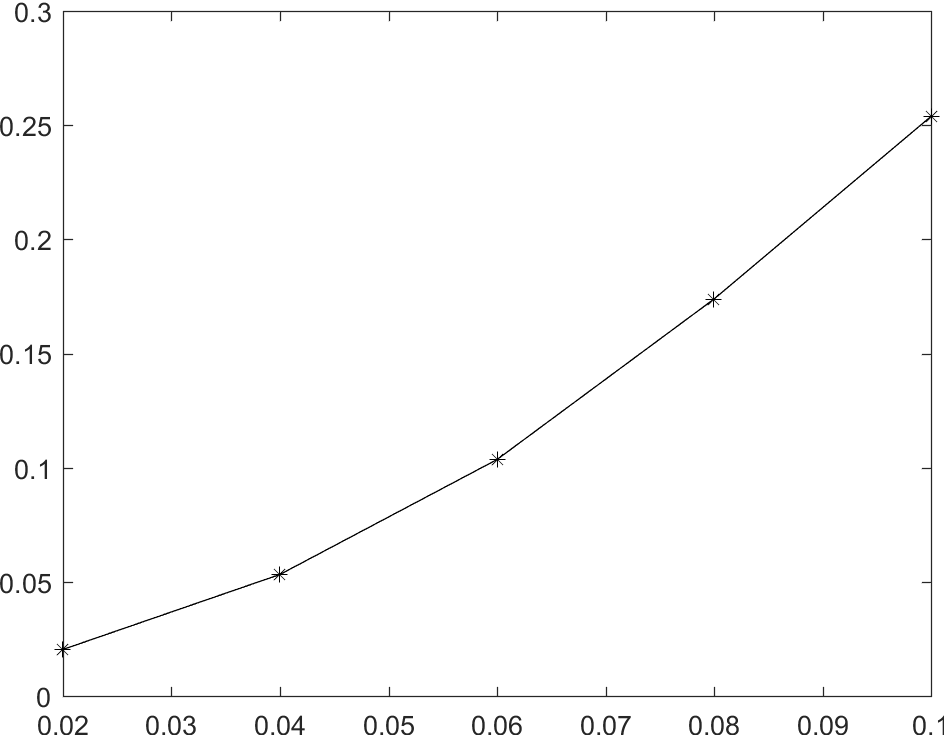}
\caption{Relative standard deviation of the objective coefficient pair $\fo=(\gamma, \sigma_a)$, $\cE_\fo$, versus the relative standard deviation of the uncertainty coefficient $\fu=c$, $\cE_\fu$, for Experiment I.}
\label{FIG:gaS v C Error}
\end{figure}
\begin{figure}[ht]
\centering
\includegraphics[width=0.3\textwidth,height=0.2\textheight]{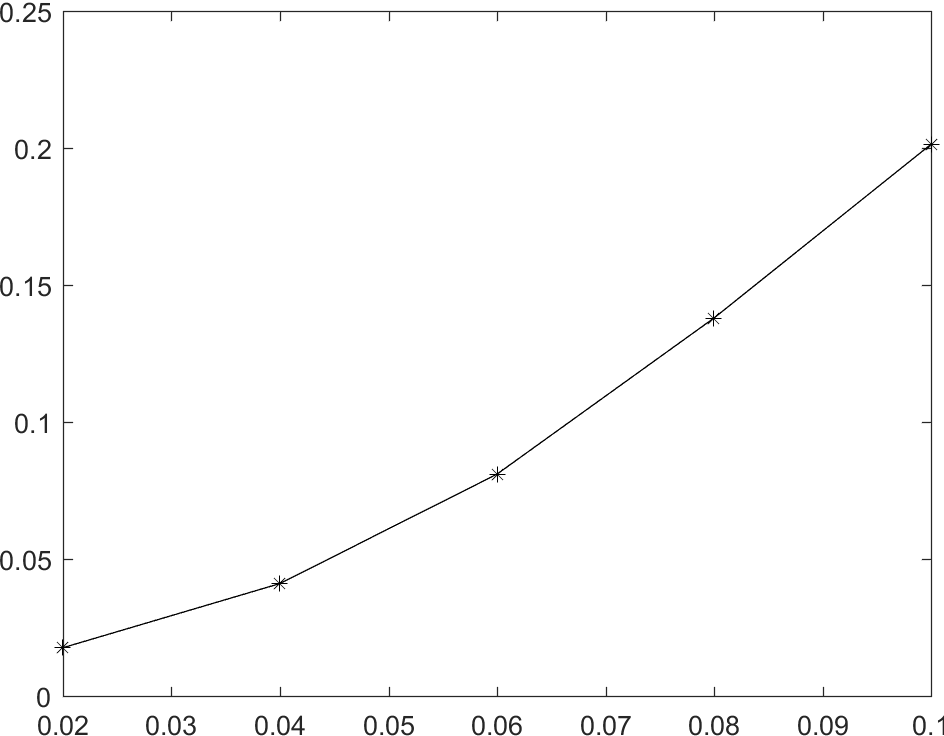}\hskip 1cm
\includegraphics[width=0.3\textwidth,height=0.2\textheight]{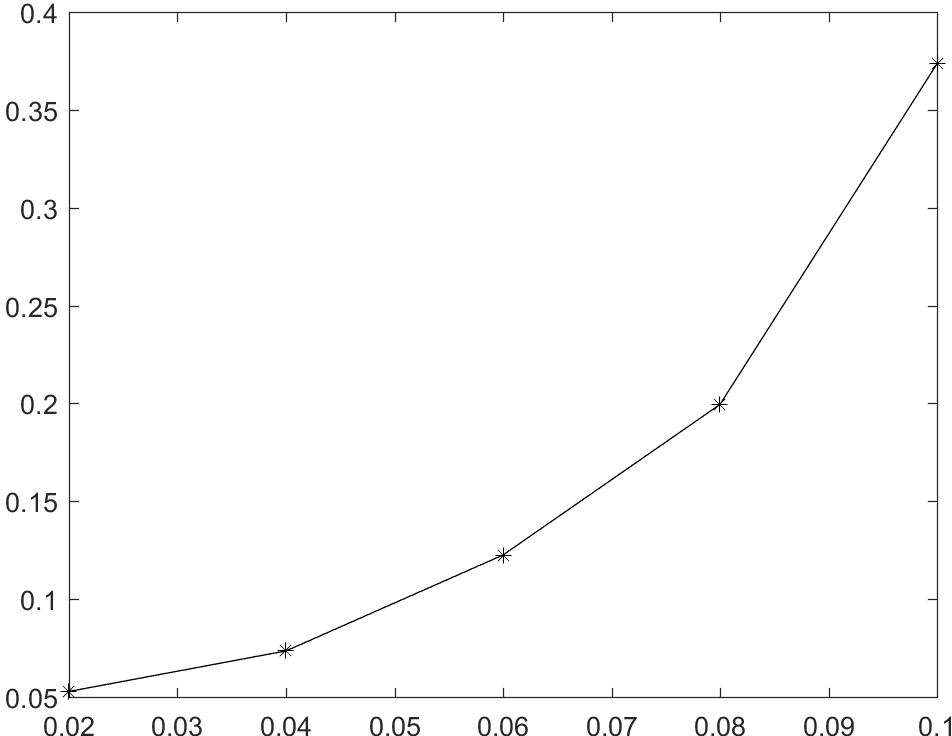}
\caption{Relative standard deviation of the objective coefficient pair $\fo=(\Gamma, \gamma)$, $\cE_\fo$, versus the relative standard deviation of the uncertainty coefficient $\fu=c$, $\cE_\fu$, for Experiment I.}
\label{FIG:Gga v C Error}
\end{figure}

We repeat the numerical simulations in Experiment I with piecewise smooth ultrasound speed constructed from~\eqref{EQ:Disc Uncert Coeff}. We use $K_c=12$ again in this simulation. In the bottom row of Figure~\ref{FIG:Ultrasound Speed}, we show four realizations of the ultrasound speed in this setup.  In Figure~\ref{FIG:GS v C-Rough Error}, Figure~\ref{FIG:gaS v C-Rough Error} and Figure~\ref{FIG:Gga v C-Rough Error}, we show the $\cE_\fo-\cE_\fu$ relations in the reconstructions of $\fo=(\Gamma, \sigma_a)$, $\fo=(\gamma, \sigma_a)$ and $\fo=(\Gamma, \gamma)$ respectively. We observe that even though the curves look like those in Figures~\ref{FIG:GS v C Error}, ~\ref{FIG:gaS v C Error} and ~\ref{FIG:Gga v C Error} for smooth random ultrasound speed, they are significantly different in the sense that piecewise smooth random ultrasound speed creates much larger impact on the reconstructions of the optical coefficients. We performed another set of simulations where the locations of the perturbations (i.e. the disks in~\eqref{EQ:Disc Uncert Coeff}) are randomly changed. The same increasing in the uncertainty of the reconstructions are observed.
\begin{figure}[ht]
\centering
\includegraphics[width=0.3\textwidth,height=0.2\textheight]{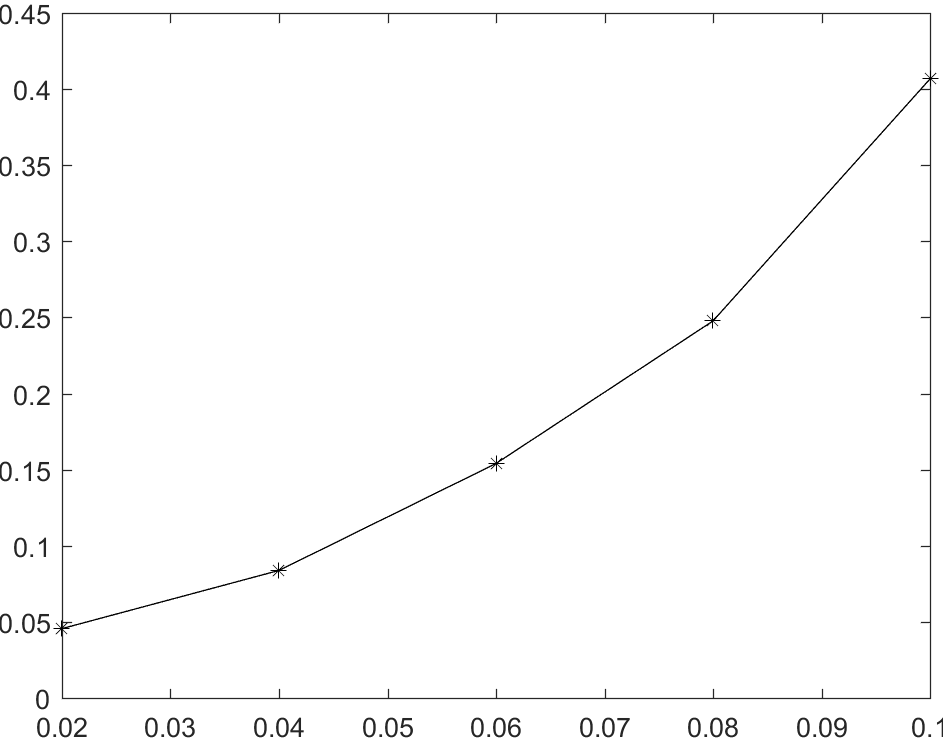} \hskip 1cm
\includegraphics[width=0.3\textwidth,height=0.2\textheight]{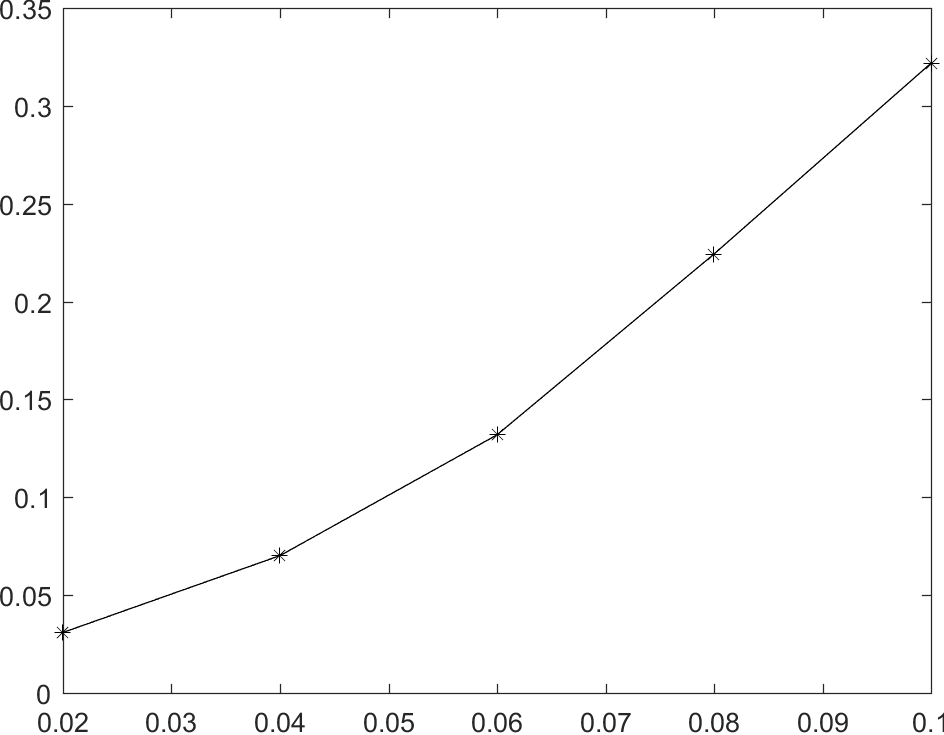} \hskip 1cm
\caption{Same as in Figure~\ref{FIG:GS v C Error} but for piecewise smooth ultrasound speed constructed from~\eqref{EQ:Disc Uncert Coeff}.}
\label{FIG:GS v C-Rough Error}
\end{figure}
\begin{figure}[ht]
\centering
\includegraphics[width=0.3\textwidth,height=0.2\textheight]{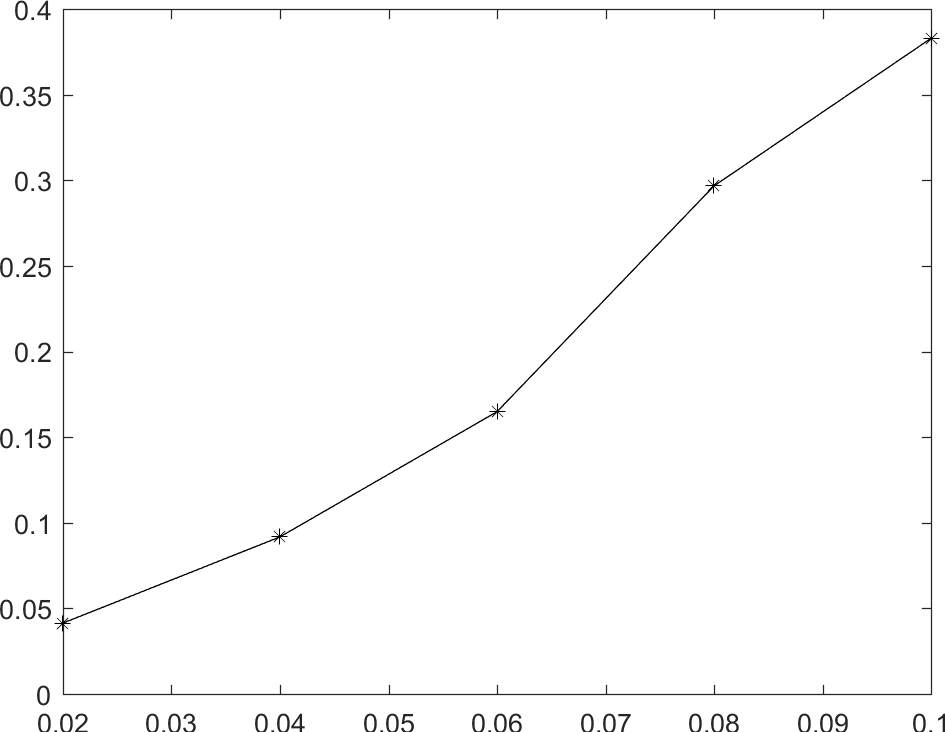} \hskip 1cm
\includegraphics[width=0.3\textwidth,height=0.2\textheight]{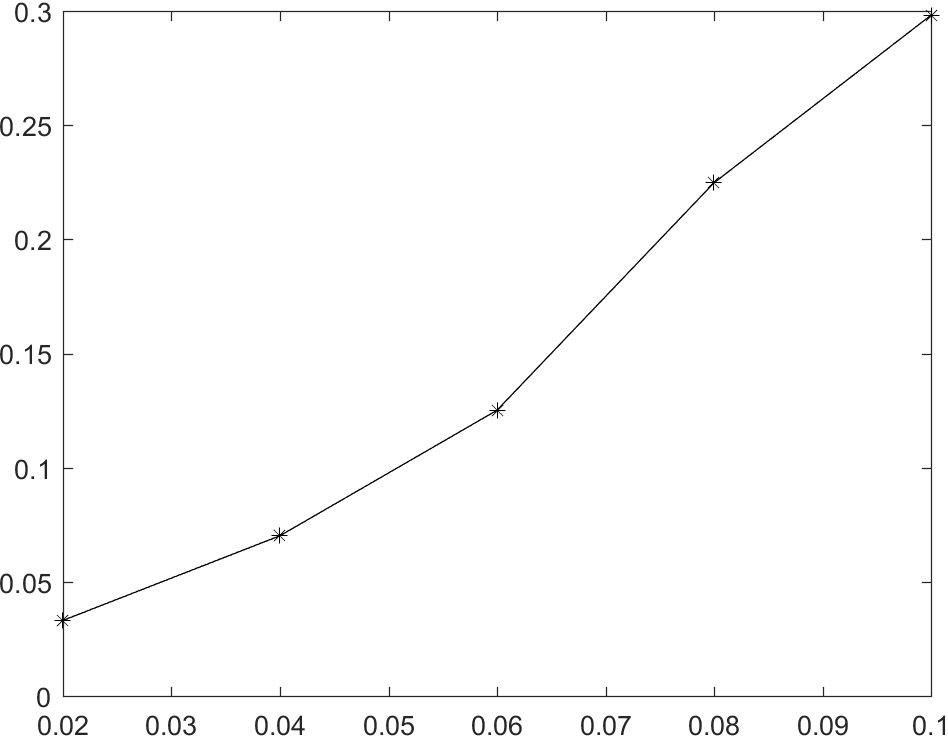}
\caption{Same as in Figure~\ref{FIG:gaS v C Error} but for piecewise smooth ultrasound speed constructed from~\eqref{EQ:Disc Uncert Coeff}.}
\label{FIG:gaS v C-Rough Error}
\end{figure}
\begin{figure}[ht]
\centering
\includegraphics[angle=0,width=0.3\textwidth,height=0.2\textheight]{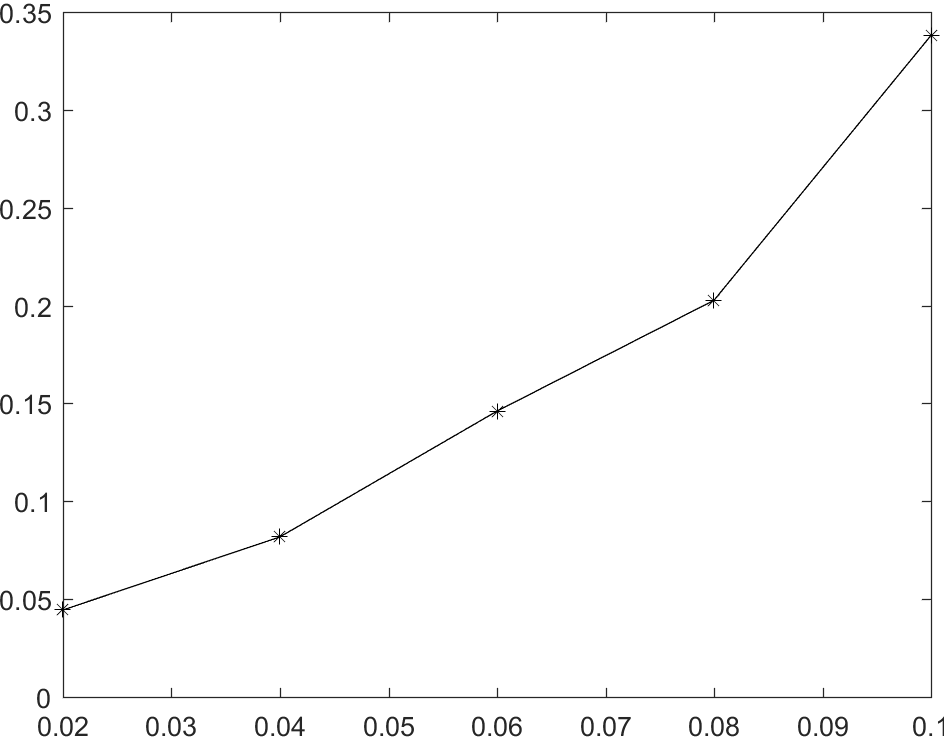}\hskip 1cm
\includegraphics[angle=0,width=0.3\textwidth,height=0.2\textheight]{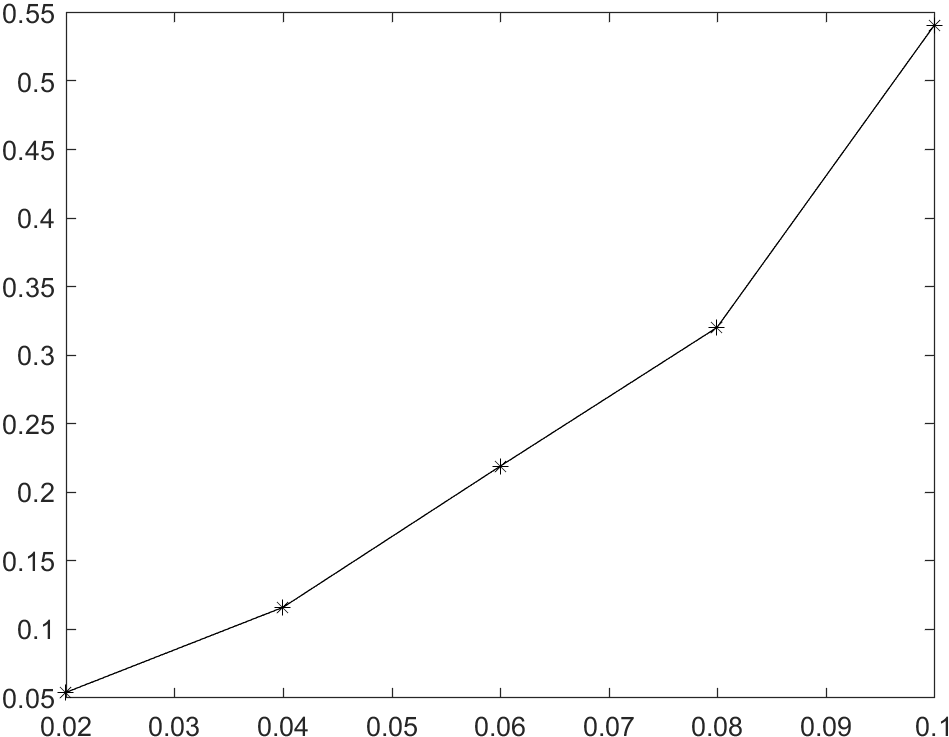}
\caption{Same as in Figure~\ref{FIG:Gga v C Error} but for piecewise smooth ultrasound speed constructed from~\eqref{EQ:Disc Uncert Coeff}.}
\label{FIG:Gga v C-Rough Error}
\end{figure}

\begin{figure}[htb!]
	\centering
	\includegraphics[angle=0,width=0.24\textwidth]{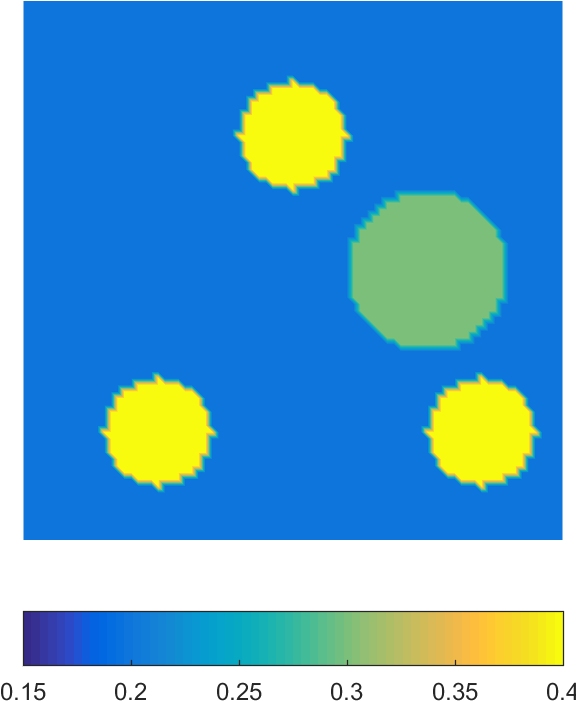} 
	\includegraphics[angle=0,width=0.24\textwidth]{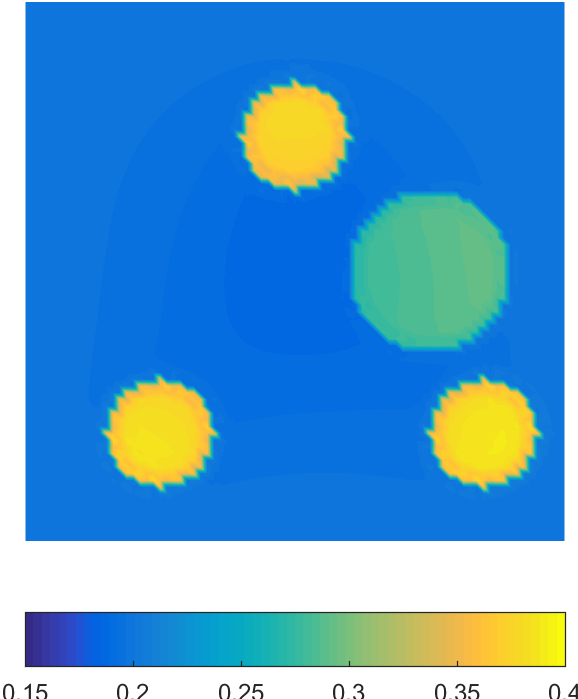}
	\includegraphics[angle=0,width=0.24\textwidth]{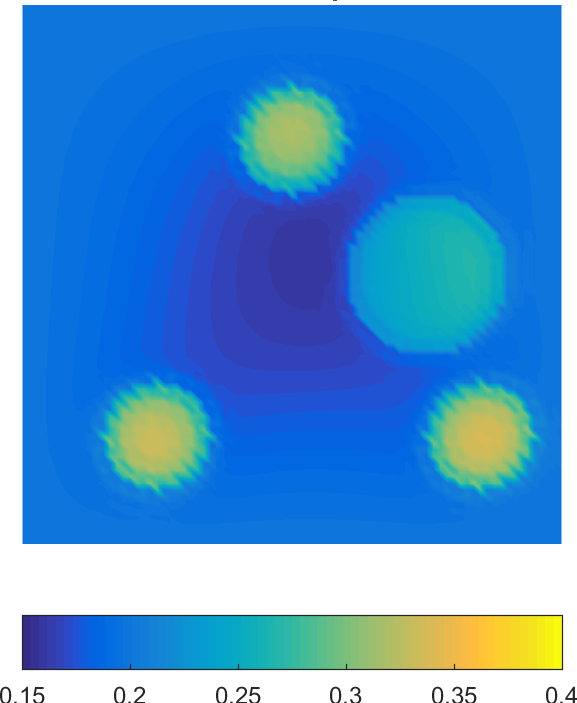}
	\includegraphics[angle=0,width=0.24\textwidth]{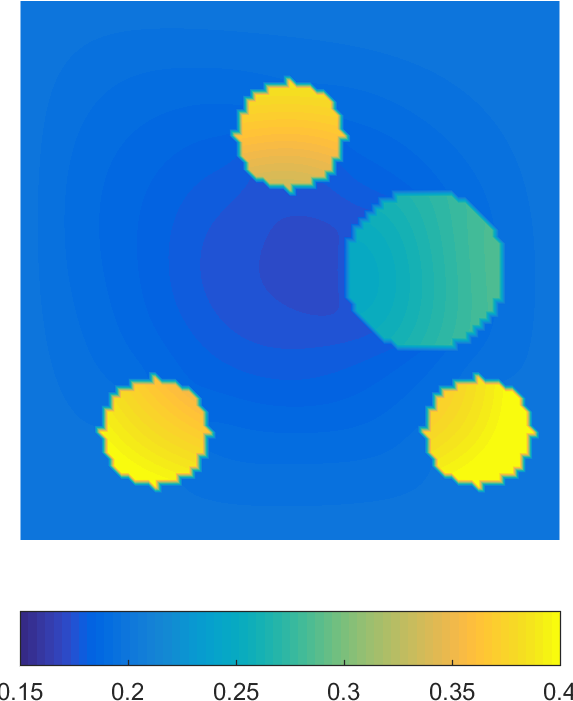}
	\caption{The true optical coefficient $\fo=\sigma_{a,xf}$ (left), the mean of the reconstructions $\wh \sigma_0$ (second column) and two realizations of the reconstructions formed from the reconstructed PCE coefficients (the two columns on the right) in fPAT.}
	\label{FIG:fPAT Sigmaxf-v-C Recon}
\end{figure}
\paragraph{Experiment II. [Ultrasound Speed Uncertainty in fPAT]} In this numerical experiment, we characterize uncertainty in the reconstruction of the fluorescence absorption coefficient $\fo=\sigma_{a,xf}$ in fluorescence PAT caused by uncertainty in the ultrasound speed. We collect ultrasound data generated from two different illumination sources. We again perform simulations with both smooth ultrasound speed from~\eqref{EQ:Smooth Uncert Coeff} and piecewise smooth ultrasound speed from~\eqref{EQ:Disc Uncert Coeff}. In both cases, we take $K_c=12$. In Figure~\ref{FIG:fPAT Sigmaxf-v-C Recon} we show the true absorption coefficient $\sigma_{a,xf}$, the mean of the reconstruction of it and two realizations of the reconstructions formed from the reconstructed PCE coefficients. We observe again that the averaged reconstruction is very accurate, comparable to the numerical simulations in~\cite{ReZh-SIAM13,ReZhZh-IP15}. The uncertainty in the reconstructions depends on the uncertainty in the ultrasound speed as in the PAT case in Experiment I: piecewise smooth random ultrasound speed could produce larger uncertainty in the reconstructions than smooth random ultrasound speed; see the top and bottom rows of Figure~\ref{FIG:fPAT Sigmaxf v C Error} for a comparison.
\begin{figure}[htb!]
\centering
\includegraphics[width=0.3\textwidth,height=0.2\textheight]{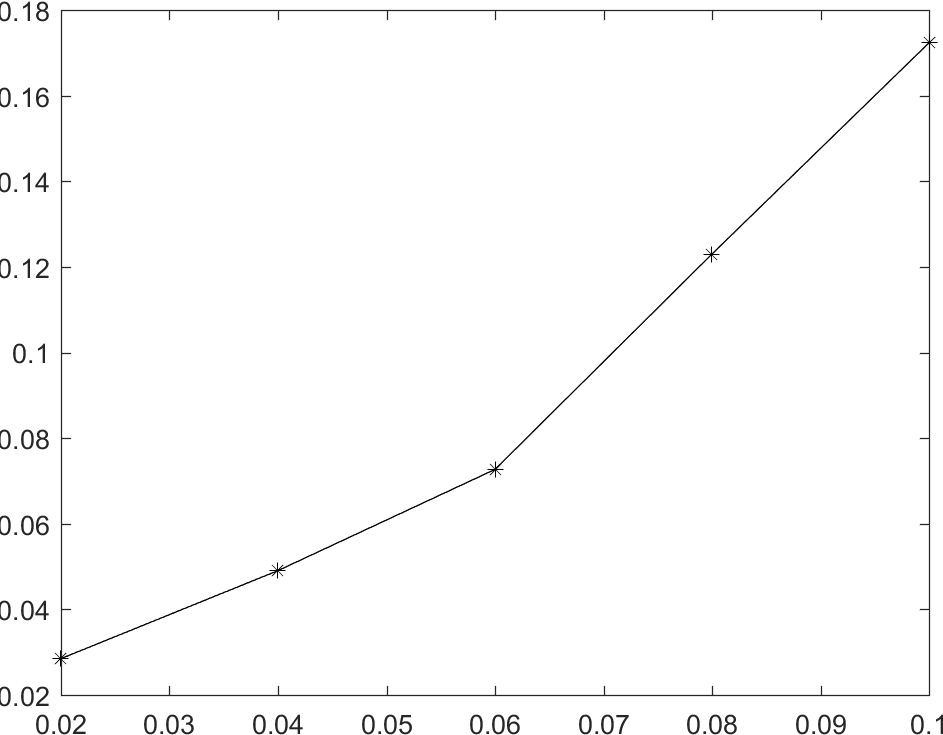}\hskip 1cm
\includegraphics[width=0.3\textwidth,height=0.2\textheight]{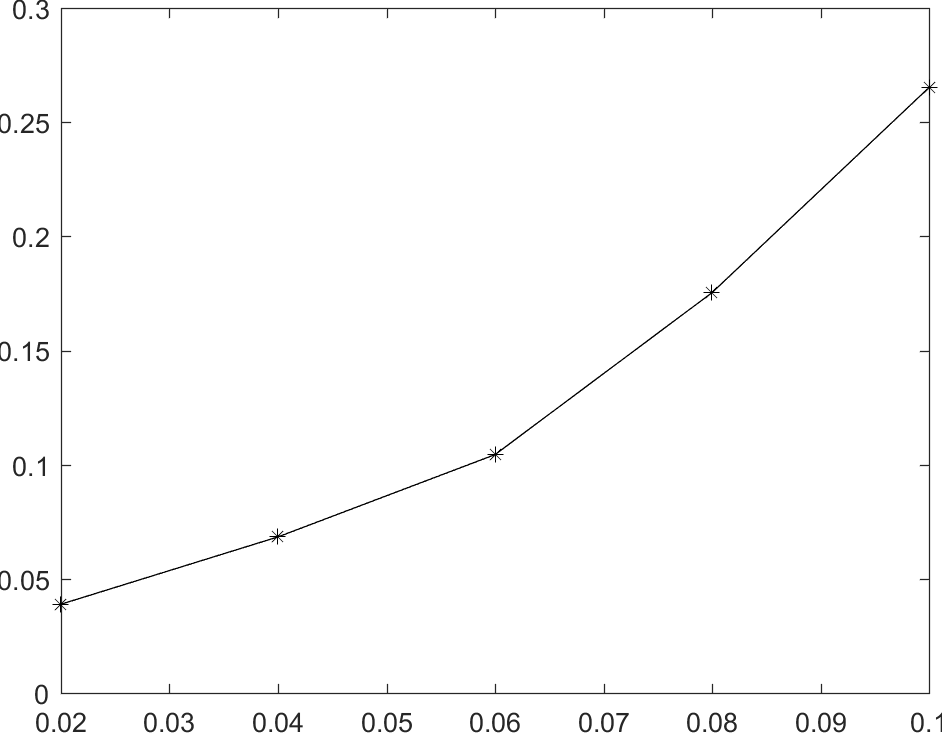}
\caption{Relative standard deviation of the objective coefficient $\fo=\sigma_{a,xf}$, $\cE_\fo$, versus the relative standard deviation of the uncertainty coefficient $\fu=c$, $\cE_\fu$, for Experiment II for smooth (left) and piecewise smooth (right) random ultrasound speed.}
\label{FIG:fPAT Sigmaxf v C Error}
\end{figure}

\subsection{Diffusion coefficient uncertainty}

We now characterize the uncertainty in optical reconstruction caused by uncertainty in the diffusion coefficient $\gamma$. In this case, the ultrasound speed is fixed in the data generation and inversion process. To avoid mixing the impact of errors in numerical wave propagation (and back-propagation) with impact of uncertainty of the diffusion coefficient, we start directly from internal data. That is, we only consider the uncertainty propagation from $\gamma$ to the internal datum $H$ and then $H$ to the objective coefficients to be reconstructed.
 
\paragraph{Experiment III. [Diffusion Coefficient Uncertainty in PAT]} We consider the reconstruction of the coefficient pair $\fo=(\Gamma, \sigma_a)$ using internal data generated from four different illuminations. We again perform simulations for both smooth random diffusion coefficients  and piecewise smooth random diffusion coefficients, with $K_\gamma=8$ and $K_\gamma=12$ respectively. The $\cE_\fo$ and $\cE_\fu$ relations are shown in Figure~\ref{FIG:GS v ga Error}. 
\begin{figure}[htb!]
\centering
\includegraphics[width=0.3\textwidth,height=0.2\textheight]{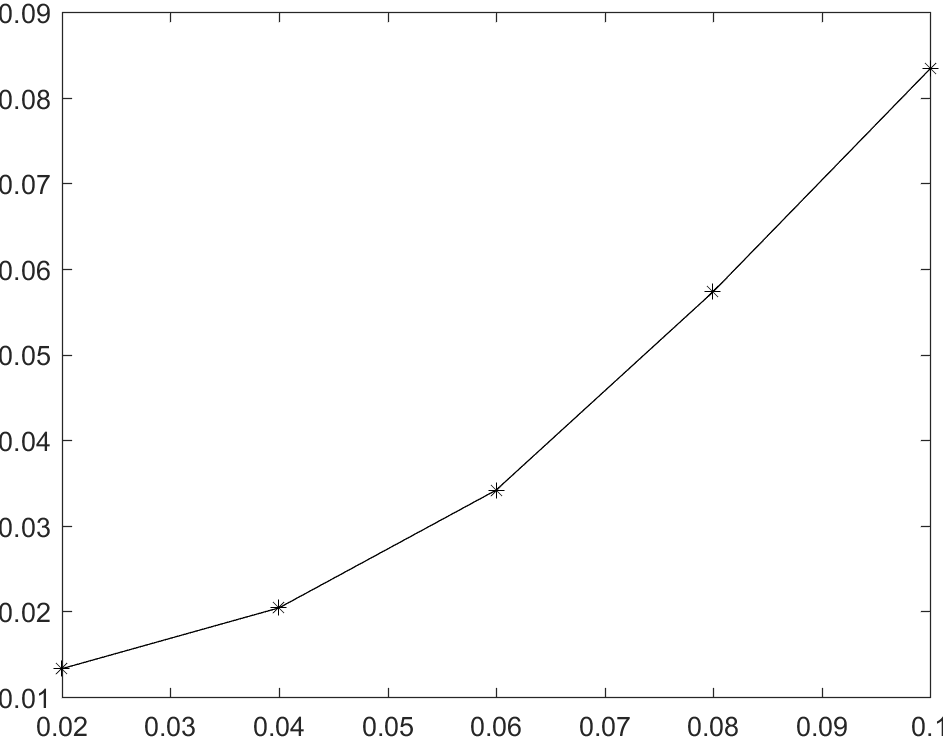}\hskip 1cm
\includegraphics[width=0.3\textwidth,height=0.2\textheight]{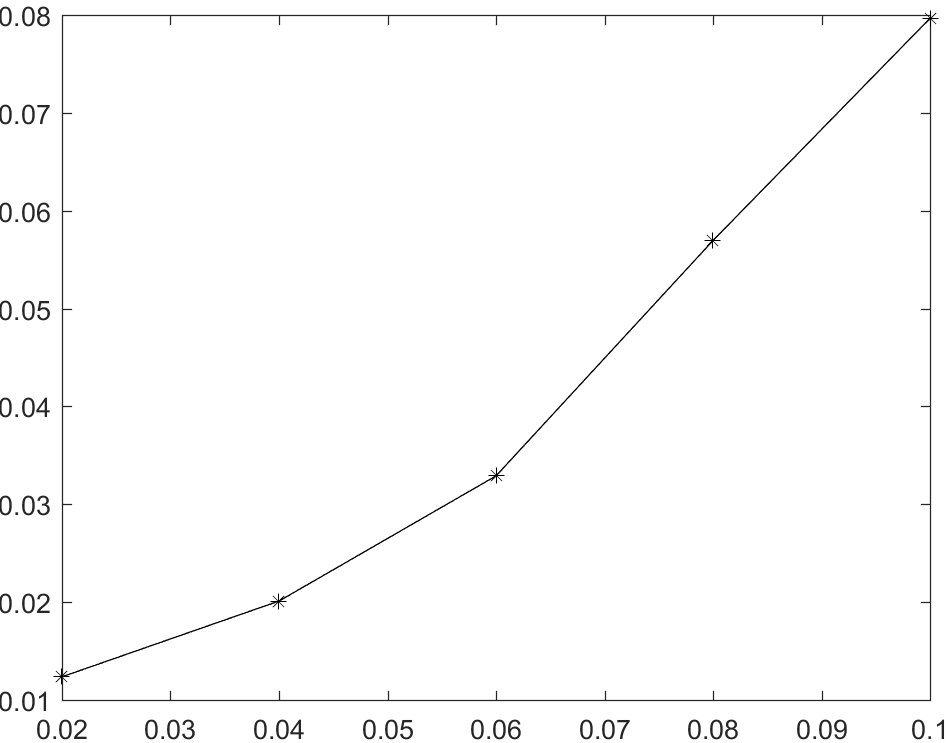}
\caption{Relative standard deviation of the objective coefficient pair $\fo=(\Gamma, \sigma_a)$, $\cE_\fo$, versus the relative standard deviation of the uncertainty coefficient $\fu=\gamma$, $\cE_\fu$, for Experiment III for smooth (top row) and piecewise smooth random $\gamma$.}
\label{FIG:GS v ga Error}
\end{figure}
In all the simulations, the true diffusion coefficient is taken as the constant $\wh\gamma_0=0.02$. We performed simulations at with other true diffusion coefficients. The results are very similar to those presented in Figure~\ref{FIG:GS v ga Error}.

The results demonstrate here again that uncertainty in piecewise smooth diffusion coefficients have larger impact on that in smooth diffusion coefficient. However, comparing Figure~\ref{FIG:GS v ga Error} with Figure~\ref{FIG:GS v C Error} and Figure~\ref{FIG:GS v C-Rough Error} shows that uncertainty in the diffusion coefficient has much smaller impact on the reconstruction of $(\Gamma, \sigma_a)$ than that in the ultrasound speed.
 
\subsection{Model uncertainty in fPAT}

In the last numerical experiment, we quantify the error in the reconstruction of the fluorescence absorption coefficient $\sigma_{a,xf}$ caused by the partial linearization, that is, dropping the coefficient $\sigma_{a,xf}$ in the first equation, of the diffusion model~\eqref{EQ:Diff fPAT}.

\paragraph{Experiment IV. [Model uncertainty in fPAT]} In our numerical simulations, we fixed every coefficient besides the fluorescence coefficient $\sigma_{a,xf}$. In this case, one well-chosen internal datum ~\eqref{EQ:Data QfPAT} allows unique and stable reconstruction of $\sigma_{a,xf}$~\cite{ReZh-SIAM13}. We generate the synthetic data from four different illuminations located on the four sides of the domain respectively, using the full diffusion model~\eqref{EQ:Diff fPAT}. We perform numerical reconstructions of $\sigma_{a,xf}$ using both the full diffusion model and the partially linearized diffusion model, i.e. the diffusion system~\eqref{EQ:Diff fPAT} without $\sigma_{a,xf}$ in the first equation. Let us denote by $\sigma_{a,xf}^{r}$ and $\sigma_{a,xf}^{r\ell}$ the reconstructions from the full diffusion model and the partially linearized model respectively, we compute the relative error caused by linearization as:
\[
	\cE_\ell=\frac{\|\sigma_{a,xf}^{r\ell}-\sigma_{a,xf}^r\|_{L^2( X)}}{\|\sigma_{a,xf}^r\|_{L^2( X)}}.
\]

We show in Figure~\ref{FIG:GS Linearization Recon} a true $\sigma_{a,xf}$, its reconstruction using the full diffusion model~\eqref{EQ:Diff fPAT} with noise-free data and noisy data, and its reconstruction with the partially linearized diffusion model. The reconstruction with the full diffusion model is very accurate, even when data is polluted with a little random noise, but the reconstruction with the partially linearized model is much less accurate, despite of the fact that the singularity in the coefficient is well reconstructed (since it is directly encoded in the internal data).
\begin{figure}[htb!]
\centering
\includegraphics[width=0.24\textwidth]{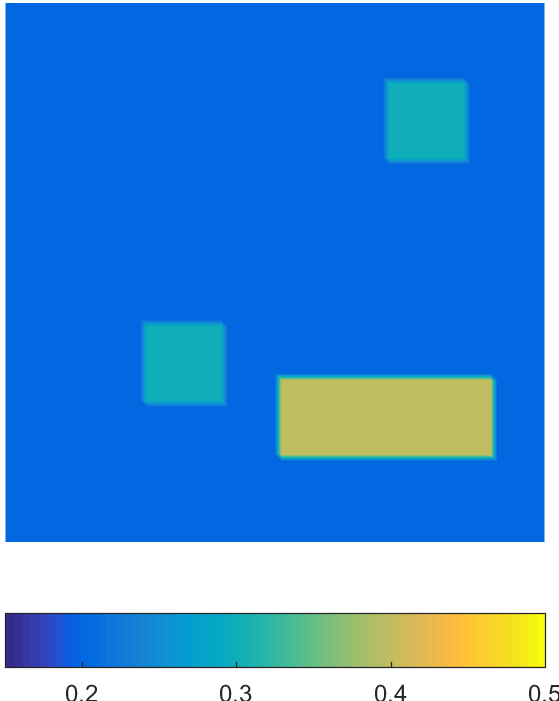}
\includegraphics[width=0.24\textwidth]{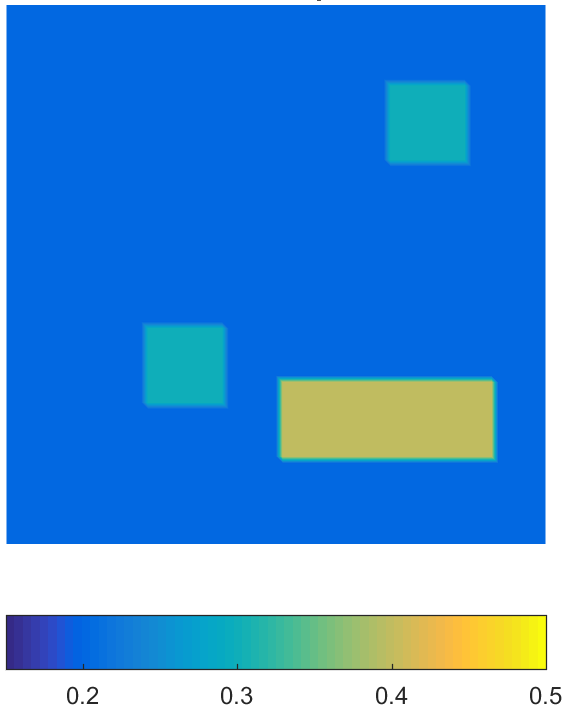}
\includegraphics[width=0.24\textwidth]{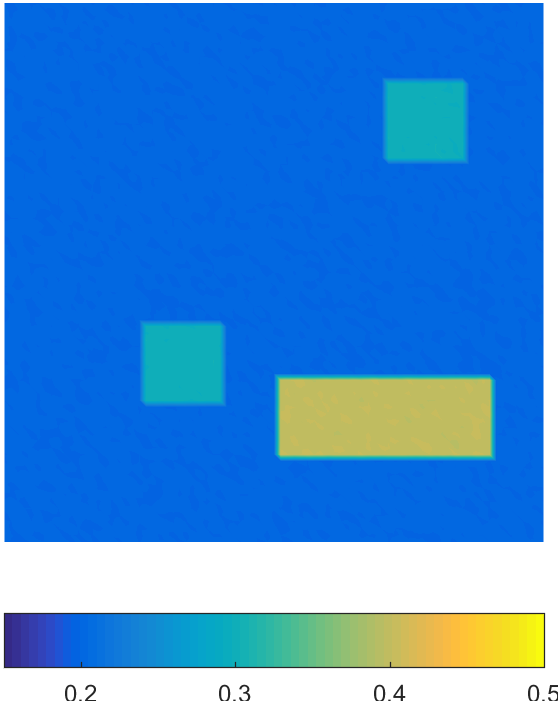}
\includegraphics[width=0.24\textwidth]{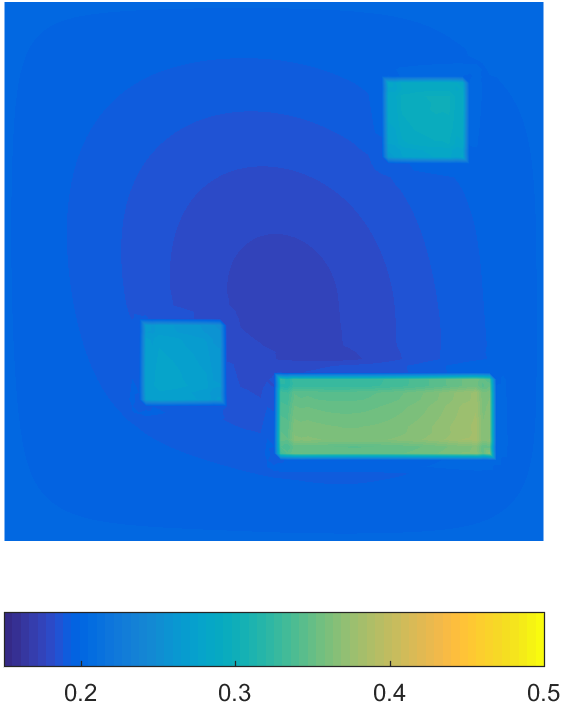}
\caption{True $\sigma_{a,xf}$ (first), its reconstruction with the full diffusion model~\eqref{EQ:Diff fPAT} using noisy free data (second) and data contain $2$\% uniformly distributed multiplicative noise (third), and its reconstruction with partially linearized diffusion model with noise free data (fourth).}
\label{FIG:GS Linearization Recon}
\end{figure}
We performed reconstructions for four different true $\sigma_{a,xf}$. The relative errors are respectively $\cE_\ell=0.08, \cE_\ell=0.09, \cE_\ell=0.08$ and $\cE_\ell=0.07$. These results show that the impact of the partial linearization on the reconstruction of the coefficient $\sigma_{a,xf}$ is relatively large. Therefore, even the partial linearization simplifies the solution of the diffusion model~\eqref{EQ:Diff fPAT}, for the sake of accuracy in reconstructions, it is probably a simplification that should not be performed in fPAT.

\section{Concluding Remarks}
\label{SEC:Concl}

In this work, we performed some analytical and numerical studies on the impact of uncertain model coefficients on the quality of the reconstructed images in photoacoustic tomography and fluorescence photoacoustic tomography. Particularly, we derived bounds on errors in the reconstruction of optical properties caused by errors in ultrasound speed used in the reconstructions, as well as bounds on error in the reconstruction of the fluorescence absorption coefficient in fPAT due to inaccuracy in the light propagation model caused by partial linearization. We presented a numerical procedure for the quantitative evaluation of such errors and performed computational simulations following the numerical procedure.

Our numerical simulations in PAT reconstructions show two phenomena that are prominent. The first is that in general, uncertainties in rougher ultrasound speed can produce larger uncertainty in reconstructed optical coefficients than what a smoother ultrasound speed can. This agrees with the general belief among researchers that reconstruction of the internal datum $H$ is ``stabler'' when the underline ultrasound speed is smooth. The second phenomenon is that in general, variations in ultrasound speed $c(\bx)$ can have much larger impact on the reconstruction of optical coefficients than variations in the diffusion coefficient $\gamma$ in the system. For the fPAT reconstructions, we observe numerically that the partial linearization by setting the fluorescence absorption coefficient $\sigma_{a,xf}=0$ in the x-component of the diffusion model~\eqref{EQ:Diff fPAT} can produce large error in the reconstruction of $\sigma_{a,xf}$. 

It is obvious that the uncertainty in the reconstruction depends on both the uncertainty in the model, which induces uncertainty in the data used for the reconstruction, and the method of the reconstructions as we explained in the Introduction (right below~\eqref{EQ:Model Gen Lin-2}). Due to the fact that we used the $l^2$ least-square optimization method for the reconstruction of the objective coefficients, which means the reconstructions are likely made smoother than they should be, the uncertainty numbers that we have seen might be actually slightly smaller than they should be. However, this effect should not distort significantly the overall trends we have observed numerically.

Characterization of errors in reconstructions caused by uncertainties in system parameters is an important task for many inverse problems in hybrid imaging modalities, or more generally any model-based imaging methods. The general methodology we developed in this work can be generalized to these inverse problems in a straightforward manner. The results we have can be generalized to deal with the situation when additional measurement noise are presented. In that case, the general model~\eqref{EQ:Model Gen} becomes $y^e=f(\fo,\fu)+e$, $e$ being the measurement noise, and the interplay between impact of $\fu$ and that of $e$ need to be analyzed carefully. We plan to investigate in this direction in a future work.


\section*{Acknowledgments}

This work is partially supported by the National Science Foundation through grant DMS-1620473. S.V. would also like to acknowledge partial support from the Statistical and Applied Mathematical Sciences Institute (SAMSI).

{\small

}

\end{document}